\newcommand{\hnabla}[0]{\tensor[^h]{\nabla}{}}
\newcommand{\weg}[1]{}
\newcommand{\Id}{\mathrm{Id}}
\theoremstyle{plain}
\newtheorem{thm}{Theorem}
\newtheorem*{thm*}{Theorem}
\newtheorem{lem}{Lemma}
\newtheorem{cor}{Corollary}
\theoremstyle{definition}
\theoremstyle{remark}
\newtheorem{rem}{Remark}
\title[Conification construction and its applications]{Conification construction for K\"ahler manifolds  and its application in c-projective geometry}
  \author{Vladimir S. Matveev and Stefan Rosemann}
\address{Institute of Mathematics, Friedrich-Schiller-Universit\"at Jena, Jena, Germany.}
\email{vladimir.matveev@uni-jena.de}
\address{Institute of Mathematics, Friedrich-Schiller-Universit\"at Jena, Jena, Germany.}
\email{stefan.rosemann@uni-jena.de}
\begin{document}

\begin{abstract}
Two K\"ahler metrics on one complex manifold are said to be c-projectively equivalent if their $J$-planar curves, i.e., curves defined by the property that their acceleration is complex proportional to their velocity, coincide.
The degree of mobility of a K\"ahler metric is the dimension of the space of metrics that are c-projectively equivalent to it. We give the  list  
 of all possible  values  of the degree of mobility of simply connected $2n$-dimensional Riemannian K\"ahler manifolds. We also describe all such values under the additional  assumption that   the metric is Einstein. As an application, we describe all  possible dimensions of the space of essential c-projective vector fields of K\"ahler and  Kähler-Einstein Riemannian  metrics.   We also  show that two c-projectively equivalent K\"ahler Einstein metrics  (of arbitrary signature)
on a closed manifold have constant holomorphic  curvature or  are affinely equivalent.
\end{abstract}

\maketitle

\section{Introduction}

The conification construction will be recalled in \S \ref{sec:construction}. It is a  special case of a local construction  from  \cite{ACM} that given a $2n$-dimensional Kähler manifold $(M, g, J)$ (of arbitrary signature) produces a $(2n+2)$-dimensional Kähler manifold. If we apply this construction to the Fubini-Study metric $g_{FS}$, we obtain the flat metric.
 
There are many similar and more general constructions that were described and successfully applied in Kähler geometry before, for example the Calabi construction or the interplay between Sasakian and Kähler manifolds. 

The main results of the paper are  applications of the construction in the theory of c-projectively equivalent metrics, let us explain what this theory is about.

Let $(M,g,J)$ be a K\"ahler manifold of real dimension $2n\geq 4$ with Levi-Civita connection $\nabla$. A regular curve $\gamma:I\rightarrow M$ is called \emph{$J$-planar} for $(g,J)$ if
\begin{align}
\left(\nabla_{\dot{\gamma}}\dot{\gamma}\right)\wedge\dot{\gamma}\wedge J\dot{\gamma}=0\label{eq:hplanar}
\end{align}
holds at each point of the curve.  The condition \eqref{eq:hplanar} can 
 equivalently be rewritten as follows: there exist smooth functions $\alpha(t), \beta(t)$ such that $$\nabla_{\dot{\gamma}(t)}\dot{\gamma}(t)= \alpha(t) \cdot \dot \gamma(t) + \beta(t) \cdot J\dot\gamma(t) .$$
Every geodesic is evidently a $J$-planar curve.  The set of $J$-planar curves is geometrically  a much bigger set  of curves than the set  of  geodesics. For example in every point and in every direction there exist infinitely many  geometrically different $J$-planar curves, see figure \ref{manyhplanar}. 

\begin{figure}
  \includegraphics[width=.3\textwidth]{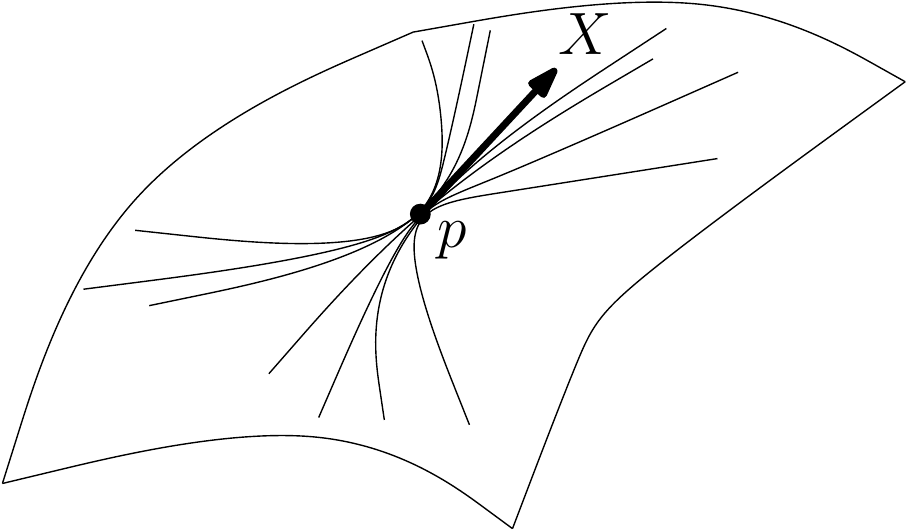}
  \caption{Given $p\in M$, $X\in T_p M$, there are infinitely many $J$-planar curves $\gamma$ such that $\gamma(0)=p,\dot{\gamma}(0)=X$.}\label{manyhplanar}
\end{figure}

Two metrics $g,\tilde{g}$ on the complex manifold $(M,J)$  which are K\"ahler  w.r.t. the complex structure $J$ are called \emph{c-projectively equivalent} if  their $J$-planar curves coincide.  A trivial example of c-projectively equivalent metrics is when the metric $\tilde{g}$ is proportional to $g$ with a constant coefficient.  Another trivial (in the sense it is relatively easy to treat it at least in the Riemannian, i.e., positively-definite case)  example is  when the metrics are \textit{affinely equivalent}, that is when  their Levi-Civita connections coincide. If these metrics are Kähler w.r.t.  the same complex structure, they are of course c-projectively equivalent since the  equation \eqref{eq:hplanar} defining $J$-planar curves involves the connection and the complex structure only. 

The theory of c-projectively equivalent K\"ahler metrics  is a classical one. It was started  in the 50th in  Otsuki et al   \cite{Otsuki1954}  and for a certain period of time was one of the main research directions of the japanese (Obata, Yano) and soviet (Sinjukov, Mikes) differential geometric schools, see the survey \cite{Mikes} or  the books \cite{Yanobook,Sinjukov} for  an overview  of the classical results.  In the recent time, the theory of c-projectively  equivalent metrics has a revival. A number of new methods appeared within or were applied in the c-projective setting and classical conjectures were solved.  Moreover, c-projectively equivalent metrics independently 
appeared under the name Hamiltonian $2$-forms, see \cite{ApostolovI,ApostolovII,ApostolovIII,ApostolovIV}, which are also closely related to conformal Killing or twistor $2$-forms studied in \cite{moruianusemmelmann,semmelmann}. These relations will be explained in more detail in Section \ref{sec:reltoothers}. The c-projectively equivalent metrics also play a role in the theory of (finitely-dimensional) integrable systems \cite{Kiyo1997} where they are closely related to the so called Kähler-Liouville metrics, see  \cite{Kiyohara2010}. 
 
 \begin{rem} Most classical  sources  use the name ``h-projective'' or ``holomorphically-projective'' for what we call ``c-projective''  in our paper. We also used ``h-projective'' in our previous publications \cite{FKMR, MatRos}. 
  Recently a group of geometers studying c-projective geometry  from different viewpoints decided to change the name from h-projective to c-projective, since a c-projective change of connections, though being complex in the natural sense,  is generically not holomorphic. The prefix  ``c-'' is chosen to be reminiscent of ``complex-'' but is not supposed
to be pronounced nor regarded as such since ``complex projective'' is
already used differently in the literature.
 \end{rem} 

As we recall in Section \ref{sec:basics}, the set of metrics c-projectively equivalent to a given one (say, $g$)
is in one-to-one correspondence with the set of  nondegenerate hermitian symmetric $(0,2)$-tensors $A$ satisfying the equation
\begin{align}
(\nabla_Z A)(X,Y)=g(Z,X)\lambda(Y)+g(Z,Y)\lambda(X)+\omega(Z,X)\lambda(JY)+\omega(Z,Y)\lambda(JX)\label{eq:mainA}
\end{align}
for all $X,Y,Z\in TM$. Here $\lambda$ is a $1$-form which is easily seen to be equal to $\lambda=\tfrac{1}{4}d\mathrm{trace}\,A$ (here, $A$ is viewed as a $(1,1)$-tensor by ``raising one index'' with the metric). Since this equation is linear, the space of its solutions is a linear vector space. Its dimension is called
the \emph{degree of mobility} of $(g,J)$ and will be denoted by $D(g,J)$. Locally, $D(g,J)$ coincides with the dimension of the set (equipped with its natural topology) of metrics c-projectively 
equivalent to $g$.

Our main result is the list of all  possible degrees of mobility of Riemannian Kähler metrics  on simply connected manifolds  (in what follows we always assume that  simply connectedness implies   connectedness). The degree of mobility is always  $\ge 1$ since the metrics of the form $\textrm{const}\cdot \  g$ provide a one-parameter family of metrics c-projectively equivalent to $g$. One can show that for a generic  metric the degree of mobility is  precisely $1$. This statement, though known in folklore, is up to our knowledge nowhere published. Let us mention therefore that by  \cite{Mikes} irreducible symmetric Riemannian Kähler spaces of  nonconstant holomorphic curvature have degree of mobility precisely $1$.  Using this result one could show, similar to \cite[\S 3.1]{hall}, that for any (local) Kähler metric $g$ and every $\epsilon>0$ there exists a Kähler  metric $g'$  that is $\varepsilon-$close to $g$ in the $C^\infty$ topology such  that there exists $\epsilon'>0$ such that for every metric $g''$ that is $\varepsilon'-$close to $g'$ in the $C^\infty$ topology     the  degree of mobility of $g''$ is precisely  $1$. \weg{
 The maximal degree of mobility is $(n+1)^2$ which is attained on  simply connected  manifolds of constant holomorphic curvature. }

\begin{thm}\label{thm:degree}
Let $(M,g,J)$ be a simply connected Riemannian K\"ahler manifold of real dimension $2n\geq 4$. Suppose at least one metric c-projectively equivalent to $g$ is not affinely equivalent to it. 

Then, the degree of mobility $D(g,J)$ belongs to the following  list:
\begin{itemize}
\item $2$,
\item $k^2+\ell$, where $k=0,...,n-1$ and $\ell=1,...,\big[\tfrac{n+1-k}{2}\big]$.
\item $(n+1)^2$,
\end{itemize}

Moreover, every  value  from  this  list that is greater than or equal to $2$ is  the degree of mobility of a certain simply connected  $2n$-dimensional K\"ahler manifold $(M, g,J)$ such that there exists a c-projectively but not affinely equivalent metric $\tilde g$.
\end{thm}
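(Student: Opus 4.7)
The starting point is the prolongation of equation~\eqref{eq:mainA}. Differentiating once and applying the Kähler and curvature identities produces a formula for $\nabla\lambda$ involving $A$, $\lambda$, and one new scalar $\mu$; differentiating again expresses $d\mu$ in terms of $\lambda$. The outcome is a closed first-order linear system on a bundle of rank $(n+1)^2$, whose dimension of parallel sections equals $D(g,J)$. This gives the bound $D(g,J)\le (n+1)^2$, and equality forces the prolongation bundle to be flat, which amounts to constant holomorphic sectional curvature. In the simply connected Riemannian setting this leaves only $\mathbb{CP}^n$, complex hyperbolic space, or $\mathbb{C}^n$ with their standard Kähler structures, accounting for the last entry $(n+1)^2$ of the list.

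Suppose now $D(g,J)<(n+1)^2$ and choose a solution $A$ of~\eqref{eq:mainA} corresponding, via the bijection recalled in \S\ref{sec:basics}, to a c-projectively equivalent metric $\tilde g$ that is not affinely equivalent to $g$; then $\trace A$ is non-constant. Because $g$ is positive definite and $A$ is $J$-hermitian, $A$ is diagonalisable with real, $J$-invariant, even-dimensional eigenspaces. On the open dense set where the spectrum is locally constant, I would separate the eigenvalues into those that are locally constant and those $\rho_1,\ldots,\rho_\ell$ that are not, and denote by $2k$ the real dimension of the ``constant'' eigenspace. Using the integrability consequences of~\eqref{eq:mainA} (in the line of~\cite{ApostolovI,ApostolovII}), these distributions are parallel and give a local Kähler product decomposition of $(M,g,J)$; by simple connectedness the splitting globalises into a Kähler product of a constant-holomorphic-sectional-curvature piece of complex dimension $k$ with $\ell$ one-complex-dimensional ``twist'' factors carrying the non-constant eigenvalues. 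Counting parallel sections of the prolongation bundle on this product then yields $D(g,J)=k^2+\ell$, and the range $k\le n-1$, $\ell\le \lfloor (n+1-k)/2\rfloor$ comes from $n=k+\sum_\alpha\dim_{\mathbb C}E_\alpha$ together with a constraint on the twist factors imposed by the $\mu$-equation. The isolated value $D=2$ appears as the degenerate borderline where the splitting is trivial but a single extra affine-type deformation remains.

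For the realisation direction I plan to use the conification construction of \S\ref{sec:construction}: applied to a Kähler manifold whose mobility is understood, it produces a higher-dimensional Kähler manifold whose mobility changes in a controlled way. Starting from the Fubini--Study model on $\mathbb{CP}^k$ (mobility $(k+1)^2$) and iterating the conification with carefully chosen parameters should provide, for every admissible pair $(k,\ell)$, a simply connected $2n$-dimensional Kähler manifold of mobility exactly $k^2+\ell$ admitting a non-affine c-projectively equivalent metric; the sporadic value $D=2$ is realised by a single conification step applied to a rigid base. The main obstacle, as I see it, is promoting the local analysis of the second paragraph to a genuinely \emph{global} splitting across the codimension-one stratum where eigenvalues of $A$ collide, and ensuring that the prolongation bundle contributes no hidden extra solutions there; a secondary difficulty is verifying that iterated conification yields exactly the predicted mobility and does not introduce spurious additional solutions of~\eqref{eq:mainA}, which requires a precise description of how the prolongation bundle lifts through each conification step.
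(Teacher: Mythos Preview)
Your approach diverges from the paper's at the crucial step, and the divergence contains a genuine gap. The eigendistributions of a single solution $A$ of~\eqref{eq:mainA} are \emph{not} parallel in general: the local normal form from the Hamiltonian $2$-form literature \cite{ApostolovI} is a generalised Calabi-type fibration, not a K\"ahler product, and the ``constant-eigenvalue'' block need not have constant holomorphic sectional curvature. Even if one had such a product, your counting is off: a CHSC factor of complex dimension $k$ has degree of mobility $(k+1)^2$, not $k^2$, so the identification $D=k^2+\ell$ does not follow from your decomposition. More fundamentally, analysing the eigenstructure of \emph{one} $A$ does not by itself control the dimension of the space of \emph{all} solutions of~\eqref{eq:mainA}; you would need to show that every other solution respects the same splitting, and this is precisely what is hard.

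The paper avoids this entirely by using the conification for the \emph{forward} direction, not only for realisation. The key input you are missing is Theorem~\ref{thm:hprosystem} (from \cite{FKMR}): when $D(g,J)\ge 3$ there is a single constant $B$ such that every solution of~\eqref{eq:mainA} extends to a solution of the closed system~\eqref{eq:hprosystem}. After changing $g$ within its c-projective class to arrange $B=-1$ (Lemma~\ref{lem:changeofmetric}), Theorem~\ref{thm:coneconstruction2} identifies $\mathcal{A}(g,J)$ with the space of parallel hermitian symmetric $(0,2)$-tensors on the $(2n{+}2)$-dimensional conification $(\hat M,\hat g,\hat J)$. Now the holonomy decomposition of $T_p\hat M$ gives \eqref{presentation1} with $D=k^2+\ell$, where $2k$ is the dimension of the flat factor and $\ell$ the number of irreducible factors; the bound $\ell\le\big[\tfrac{n+1-k}{2}\big]$ comes from the fact that $\hat M$ is a \emph{cone}, so each irreducible factor is itself a non-flat cone and hence has real dimension $\ge 4$. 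This cone property of the irreducible pieces is the mechanism producing the bound, and it has no analogue in your eigenspace picture on $M$.
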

In the theorem above, $\big[\ .\ \big]$ denotes  the integer part. The condition    $D(g,J)\ge 2$ is due to the  assumption   that there exists a metric that is c-projectively but not affinely equivalent (and therefore not proportional) to $g$. 

\begin{figure}
  \includegraphics[width=.7\textwidth]{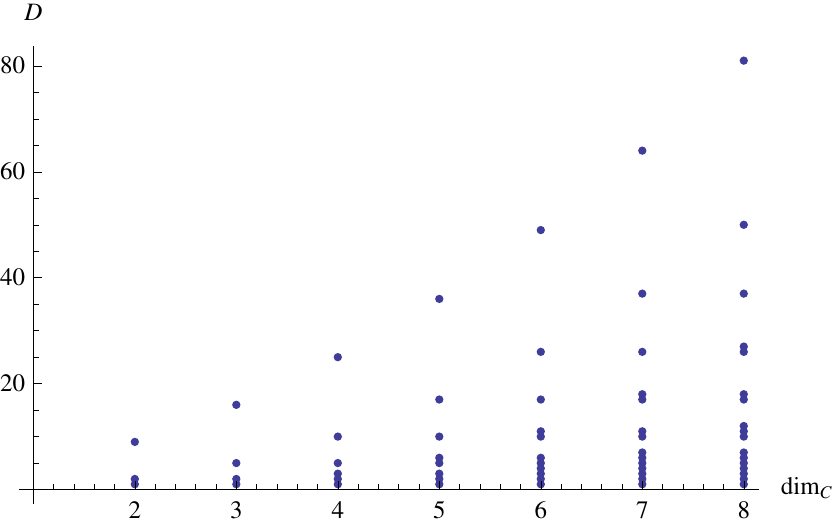}
  \caption{The possible values for the degree of mobility $D$ for $2\leq\tfrac{1}{2}\mathrm{dim} M\leq  8$. }\label{1}
\end{figure}

If all metrics that are c-projectively equivalent to $g$ are affinely equivalent to $g$, then  it is not a big deal to obtain in the Riemannian situation 
the list of all possible degrees of mobility of such  metrics on simply connected manifolds by  using the same circle of ideas as in the proof of Theorem \ref{thm:degree} (see also Section \ref{sec:ideas} below): it is 
$$
\{k^2 + \ell \mid  k\le n-1, \  1 \le \ell \le   {n-k}\}\bigcup \{   n^2\}. 
$$ 

Special cases of Theorem \ref{thm:degree} were known before. 
It is a classical result (see e.g. \cite[Proposition 4]{ApostolovI} or \cite[Chap. V, \S 3]{Sinjukov}) 
 that the maximum value $D(g,J)=(n+1)^2$ implies that the metric has constant holomorphic curvature and is attained on simply connected manifolds of  constant holomorphic curvature.  
It was also previously known (\cite[Proposition 10]{ApostolovII} and \cite[Lemma 6]{FKMR}) that in the   case when the dimension is $2n=4$ the degree of mobility (on a simply connected manifold) takes the values  $1,2,9$ only.  We also  see  that the submaximal  degree of mobility  is $(n-1)^2+ 1 = n^2 - 2n +2$. This value was also known before, see \cite[\S 1.2]{Mikes}, though we did not find a place where this statement was proved.

Under the additional assumption that the manifold $M$ is closed, the analog of  Theorem \ref{thm:degree}  is also essentially known and the list of possible degrees of mobility 
 is $\{1,2, (n+1)^2\}$. Indeed, as it was shown in \cite{FKMR}, if $D(g,J)\geq 3$, $(M,g,J)$ must be equal to $(\mathbb{C}P(n),\mbox{const}\cdot g_{FS},J_{standard})$ and therefore its degree of mobility is  $(n+1)^2$. On the other hand, there are   many examples (constructed  in \cite{ApostolovII,ApostolovIII,ApostolovIV} or \cite{Kiyohara2010}) of closed K\"ahler manifolds different from $(\mathbb{C}P(n),\mbox{const}\cdot g_{FS},J_{standard})$ admitting c-projectively equivalent but  not affinely equivalent Riemannian  K\"ahler metrics. Thus, on closed K\"ahler manifolds (and here the assumption that the manifold is simply connected is not important), 
 $D(g,J)$  takes  the values $1,2$ and $(n+1)^2$ only.

\subsection{The dimension of the space  of essential c-projective vector fields}

A (possibly, local) diffeomorphism  $f:M\rightarrow M$ of a K\"ahler manifold $(M,g,J)$ is called \emph{c-projective transformation} if it is \emph{holomorphic} (i.e., preserves $J$) 
and if it  sends  $J$-planar curves to $J$-planar curves. A clearly equivalent requirement is  that  the pullback $f^*g$ is c-projectively equivalent to $g$. 
A c-projective transformation is called \emph{essential}, if it is not an isometry. A vector field is called a \emph{c-projective vector field} if its (locally defined) flow consists of c-projective transformations; we call it essential if it is not a Killing vector field.

 For a given K\"ahler structure $(g,J)$, let $\mathfrak{c}(g,J)$ and  $\mathfrak{i}(g,J)$ denote the Lie algebras of c-projective and holomorphic Killing vector fields respectively. Both are linear  vector spaces and $\mathfrak{c}(g,J) \supseteq \mathfrak{i}(g,J)$. 
  The quotient vector space  $\mathfrak{c}(g,J)/\mathfrak{i}(g,J)$ will be called the \emph{space of essential c-projective vector fields}. Its dimension is 
  $\dim(\mathfrak{c}(g,J))- \dim(\mathfrak{i}(g,J))$.  In the proof of Theorem \ref{thm:hprotrafo} it will be clear that under the assumption that the degree of mobility is $\ge 3$ this vector space could be (canonically) viewed as  a subspace of  $\mathfrak{c}(g,J)$. It is not a subalgebra though: typically the commutator of two vector fields from this space is a nontrivial Killing vector field. 
  Let us also mention  that the number $\dim(\mathfrak{c}(g,J))- \dim(\mathfrak{i}(g,J))$ remains the same if we replace $g$ by a c-projectively equivalent metric $\tilde g$.  
  
\begin{thm}\label{thm:hprotrafo}
Let $(M,g,J)$ be a simply connected Riemannian K\"ahler manifold of real dimension $2n\geq 4$. Suppose at least one metric c-projectively equivalent to $g$ is not affinely equivalent to it. 

Then, the   dimension of the space $\mathfrak{c}(g,J)/\mathfrak{i}(g,J)$ is 
\begin{itemize}
\item $0,1$, or  
\item $k^2+\ell-1$, where $k=0,...,n-1$ and $\ell=1,...,\big[\tfrac{n+1-k}{2}\big]$, or 
\item $(n+1)^2-1$.
\end{itemize}

Moreover, each of the values of the above list is equal to the number $\mathrm{dim}(\mathfrak{c}(g,J)/\mathfrak{i}(g,J))$ for a certain $2n$-dimensional simply connected 
Riemanian K\"ahler manifold  $(M,g,J)$ such that there exists a metric that is c-projectively but not affinely equivalent to $g$.
\end{thm}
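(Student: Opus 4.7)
The plan is to construct a natural injection of $\mathfrak{c}(g,J)/\mathfrak{i}(g,J)$ into the solution space $\mathcal{A}$ of equation \eqref{eq:mainA}, to prove a dichotomy saying that its image is either trivial or of codimension one in $\mathcal{A}$, and then to combine this with Theorem \ref{thm:degree} to read off the list. The realization part proceeds case by case using the conification construction of Section \ref{sec:construction}.

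For the injection, let $v\in \mathfrak{c}(g,J)$. Since the flow of $v$ consists of c-projective transformations and $v$ preserves $J$, the tensor $L_v g$ is hermitian and represents an infinitesimal c-projectively equivalent metric. Via the bijection from Section \ref{sec:basics}, it corresponds to an element $A_v\in \mathcal{A}$, and $\Phi\colon v\mapsto A_v$ is linear with $\ker\Phi=\mathfrak{i}(g,J)$. Hence
\[
\Phi\colon \mathfrak{c}(g,J)/\mathfrak{i}(g,J)\hookrightarrow \mathcal{A},\qquad \dim\bigl(\mathfrak{c}(g,J)/\mathfrak{i}(g,J)\bigr)\le D(g,J).
\]

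The technical heart is the dichotomy $\mathrm{rank}\,\Phi\in \{0,D(g,J)-1\}$. To prove it, I would prolong \eqref{eq:mainA} to a closed first-order linear system on a rank-$D(g,J)$ vector bundle, observe that $A_0:=g\in \mathcal{A}$ is a distinguished element which cannot lie in $\mathrm{Im}\,\Phi$ (since $\Phi(v)=A_0$ would force $v$ to be a homothety of $g$, which is ruled out in the Riemannian setting as soon as $g$ admits a c-projectively but not affinely equivalent partner), and then show that if one essential c-projective vector field exists, then the Lie-derivative action of $\mathfrak{c}(g,J)$ on the prolonged system already generates all of $\mathcal{A}/\langle A_0\rangle$. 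Together with the elementary observation that $D(g,J)=2$ by itself forces $\dim(\mathfrak{c}/\mathfrak{i})\le 1$, and with Theorem \ref{thm:degree}, this yields
\[
\dim\bigl(\mathfrak{c}(g,J)/\mathfrak{i}(g,J)\bigr)\in \{0\}\cup \{D-1: D\text{ in the list of Theorem \ref{thm:degree}}\},
\]
which is exactly the set in the statement.

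For realization, the maximum $(n+1)^2-1$ is attained on $(\mathbb{C}P(n),g_{FS})$: the c-projective algebra $\mathfrak{sl}(n+1,\mathbb{C})$ (real dimension $2(n+1)^2-2$) modulo the isometry algebra $\mathfrak{su}(n+1)$ (real dimension $(n+1)^2-1$) has real dimension $(n+1)^2-1$. Each intermediate value $k^2+\ell-1\ge 1$ is attained on the examples constructed in Theorem \ref{thm:degree} via conification applied to lower-dimensional factors of constant holomorphic curvature: the scaling vector field on the cone descends to an essential c-projective vector field on the base, and the full $(D-1)$-dimensional essential algebra is produced by iterated Lie derivatives with the $D$-parameter family of c-projectively equivalent metrics. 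Finally, the value $0$ is attained by closed K\"ahler--Liouville metrics (see \cite{Kiyohara2010}) with $D=2$ whose c-projective and isometry algebras coincide. I expect the dichotomy step to be the main obstacle, resting as it does on a detailed prolongation analysis of \eqref{eq:mainA} and on the interaction between essential c-projective vector fields and the conification lift.
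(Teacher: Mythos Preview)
Your overall strategy---map $\mathfrak{c}/\mathfrak{i}$ into $\mathcal{A}$ and show its image is trivial or has codimension one---matches the paper's, but the key step is not established and one supporting claim is false.

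The dichotomy $\mathrm{rank}\,\Phi\in\{0,D-1\}$ is the whole point, and your proposed mechanism (``the Lie-derivative action of $\mathfrak{c}$ generates all of $\mathcal{A}/\langle g\rangle$'') is not a proof. The paper does not argue this way. Instead it produces an explicit \emph{splitting} in the other direction: given $A\in\mathcal{A}$, the vector field $\Lambda=g^{-1}\lambda$ is c-projective, and $f\circ h=\mathrm{Id}$ on $\mathcal{A}/\mathbb{R}g$. This computation uses the second equation of the extended system \eqref{eq:hprosystem}, so it only works when $D\ge 3$ and $B\ne 0$; when $B=0$ one first passes to a c-projectively equivalent metric with $B\ne 0$ (Corollary~\ref{cor:changeofmetric}) and then checks that $\dim\mathfrak{i}$ is invariant under c-projective equivalence. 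You have none of this machinery in your outline.

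Your claim that a non-affine c-projective partner rules out non-Killing homotheties in the Riemannian setting is wrong. The paper exhibits an explicit Ricci-flat $4$-dimensional K\"ahler metric (end of Section~\ref{sec:thmhprotrafo}) admitting a non-affine c-projective partner \emph{and} a genuine homothety $v$ with $\mathcal{L}_v g=3g$. The correct argument is: when $B\ne 0$, rescaling $g\mapsto cg$ sends $B\mapsto B/c$, but a homothety would force $B(cg)=B(g)$, hence $c=1$. So the exclusion of homotheties is tied to $B\ne 0$, not to the mere existence of a non-affine partner. This is why the $B=0$ case needs the separate reduction above.

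Finally, you omit the local-to-global passage: the splitting argument and the $B\ne 0$ trick are performed on balls, and one must invoke that both Killing and c-projective vector fields are parallel sections of appropriate tractor bundles to apply Lemma~\ref{lem:extensiontoglobal}.
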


Under the assumption that the manifold is closed, the analog of  Theorem \ref{thm:hprotrafo} is again known and is due to \cite{MatRos} where  the classical Yano-Obata conjecture is proved; this conjecture implies   that  on any  closed Riemannian K\"ahler manifold   $\mathrm{dim}(\mathfrak{c}(g,J))=\dim(\mathfrak{i}(g,J))$ unless the manifold is $(\mathbb{C}P(n),\mbox{const}\cdot g_{FS},J_{standard})$ .

\subsection{ Einstein metrics}
Our next group of results concerns Kähler-Einstein metrics.  Note that Einstein metrics  play a special  important role in the c-projective geometry since they are closely related to the normal sections of the so-called  prolongation connection of the metrizability equation, see \cite{EMN}.  

The analog of Theorem \ref{thm:degree} under the additional assumption that the metric is Einstein is 
\begin{thm}\label{thm:degreeeinstein}
Let $(M,g,J)$ be a simply connected Riemannian K\"ahler-Einstein manifold of real dimension $2n\geq 4$. Assume at least one metric c-projectively equivalent to $g$ is not affinely equivalent to it. Then, the degree of mobility $D(g,J)$ is equal to one of the following numbers:
\begin{itemize}
\item $2$,
\item $k^2+\ell$, where $k=0,...,n-2$ and $\ell=1,...,\big[\tfrac{n+1-k}{3}\big]$,
\item $(n+1)^2$. 
\end{itemize}

Conversely, each of the numbers of the above list that is greater than or equal to $2$  is  the degree of mobility of a certain 
simply connected  Riemannian  K\"ahler-Einstein manifold $(M^{2n},g,J)$ admitting a metric $\tilde g$ that is c-projectively equivalent but not affinely equivalent to $g$.
\end{thm}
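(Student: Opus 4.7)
My plan is to refine the proof of Theorem \ref{thm:degree} by adding the Einstein equation as an extra constraint on the space of solutions of \eqref{eq:mainA}, cutting the general list down to the smaller Einstein list. The necessity reduces, after handling the extremal values $D(g,J) \in \{2,(n+1)^2\}$, to sharpening the combinatorial bound on the parameters $(k,\ell)$ that label the canonical decomposition associated to a generic solution $A$. The sufficiency realizes each admissible value by applying the conification construction of \S\ref{sec:construction} to appropriate K\"ahler-Einstein seeds.

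For the necessity, the value $D(g,J) = (n+1)^2$ already forces constant holomorphic sectional curvature (the classical result cited in the introduction), hence the Fubini-Study metric, which is Einstein; the value $2$ is trivially in the list. For $3 \leq D(g,J) < (n+1)^2$, Theorem \ref{thm:degree} gives $D(g,J) = k^2 + \ell$ with $0 \leq k \leq n-1$ and $\ell \leq [(n+1-k)/2]$, coming from a canonical $J$-invariant parallel orthogonal splitting $TM = V_0 \oplus V_1 \oplus \cdots \oplus V_\ell$ of a generic solution $A$ of \eqref{eq:mainA}: $V_0$ (encoded by $k$) is the constant-eigenvalue factor, and each $V_i$ ($i \geq 1$, complex dimension $m_i$, $\sum m_i = n - k$) is the eigenspace of one non-constant eigenvalue. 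The next step is to substitute the Einstein equation $\mathrm{Ric}(g) = \tfrac{\mathrm{scal}}{2n}\,g$ into the prolongation of \eqref{eq:mainA} restricted to each block $V_i$. This yields an additional scalar ODE on the eigenvalue functions of $A$ whose compatibility with the existing prolongation strengthens the combinatorial bound, summing across blocks to $3\ell \leq n+1-k$ and simultaneously excluding $k = n-1$ (the remaining dimensional budget being insufficient to sustain any nontrivial Einstein block). This recovers precisely the Einstein list.

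For the converse, I would apply the conification construction recalled in \S\ref{sec:construction}: the value $(n+1)^2$ is realized by $(\mathbb{C}P(n), g_{FS}, J_{std})$, and $2$ by a generic K\"ahler-Einstein example (for instance, one of the Hamiltonian $2$-form examples from \cite{ApostolovII,ApostolovIII,ApostolovIV}). For each intermediate admissible value $k^2 + \ell$, the strategy is to assemble a K\"ahler-Einstein seed as a product of a constant-holomorphic-curvature factor corresponding to the $V_0$ piece and of K\"ahler-Einstein eigenvalue blocks of the minimum complex dimension allowed by the Einstein inequality, and then, if needed, iterate the conification construction, each step adding exactly one non-constant eigenvalue to $A$ while preserving the Einstein property. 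The hard part will be the sharp Einstein lower bound at each non-constant block: extracting the refined inequality $3\ell \leq n+1-k$ and the exclusion $k \leq n-2$ from the substitution of the Einstein equation into the full prolongation of \eqref{eq:mainA} is a genuinely new computation compared to Theorem \ref{thm:degree}. Dually, verifying that the conification construction preserves the Einstein condition in the required Riemannian signature is the principal technical point of \S\ref{sec:construction} and is the main motivation for introducing the construction in this paper.
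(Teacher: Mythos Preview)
Your plan misidentifies the object that carries the decomposition underlying the formula $D(g,J)=k^2+\ell$. In the paper, $k$ and $\ell$ do \emph{not} arise from an eigenspace splitting $TM=V_0\oplus\cdots\oplus V_\ell$ of a single generic solution $A$ of \eqref{eq:mainA} on $M$; they arise from the holonomy decomposition \eqref{eq:decomptangenttrivial} of the tangent space of the $(2n+2)$-dimensional \emph{conification} $(\hat M,\hat g,\hat J)$, after Theorem \ref{thm:coneconstruction2} converts $\mathcal A(g,J)$ into the space of parallel hermitian symmetric $(0,2)$-tensors on $\hat M$. The number $k^2$ counts hermitian symmetric tensors on the flat factor $T_0$, and each $C_i$ in \eqref{presentation1} contributes one dimension; this has no direct relation to the number of non-constant eigenvalues of a chosen $A$ on $M$. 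Consequently your proposed step ``substitute the Einstein equation into the prolongation of \eqref{eq:mainA} restricted to each block $V_i$ to obtain $3\ell\le n+1-k$'' is aimed at the wrong decomposition and, as stated, is not a computation that produces the Einstein bound.

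The actual mechanism in the paper is different and genuinely uses the cone structure: via Lemma \ref{lem:eins} (and the normalisation enabled by Lemma \ref{lem:scalarB} and Corollary \ref{cor:changeofmetriceinstein}), the Einstein condition on $g$ becomes Ricci-flatness of $\hat g$; then Lemma \ref{lem:ricciflatcone} shows that a non-flat Ricci-flat K\"ahler cone factor must have real dimension $\ge 6$ (and $\ge 8$ if it carries a parallel null vector, in the signature-$(2,2n)$ case). This is what replaces the ``$\ge 4$'' in \eqref{+1} by ``$\ge 6$'' and yields $k\le n-2$, $\ell\le\big[\tfrac{n+1-k}{3}\big]$. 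Your plan neither invokes Ricci-flatness of the cone nor anything equivalent to Lemma \ref{lem:ricciflatcone}, so the crucial dimension bound is unaccounted for. Similarly, for the realization part the paper does not iterate conifications of Einstein seeds; it builds a Ricci-flat K\"ahler cone directly as a product $\mathbb R^{2k}\times M_1\times\cdots\times M_\ell$ of irreducible Ricci-flat K\"ahler cones of dimension $\ge 6$ and then passes to the K\"ahler quotient (Lemma \ref{lem:coneconstruction1b}) to recover an Einstein base, which is not what you describe.
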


The analog of Theorem \ref{thm:hprotrafo} under the additional assumption that the metric is Einstein looks as follows:
\begin{thm}\label{thm:hprotrafoeinstein}
Let $(M,g,J)$ be a simply connected Riemannian K\"ahler-Einstein  manifold of real dimension $2n\geq 4$. Suppose at least one metric c-projectively equivalent to $g$ is not affinely equivalent to it. 

Then, the   dimension of the space $\mathfrak{c}(g,J)/\mathfrak{i}(g,J)$ is 
\begin{itemize}
\item $0,1$, or  
\item $k^2+\ell-1$, where $k=0,...,n-2$ and $\ell=1,...,\big[\tfrac{n+1-k}{3}\big]$, or 
\item $(n+1)^2-1$.
\end{itemize}

Moreover, each of the values of the above list is equal to  $\mathrm{dim}(\mathfrak{c}(g,J)/\mathfrak{i}(g,J))$ for a certain $2n\ge 4$-dimensional simply connected 
Riemannian K\"ahler-Einstein manifold  $(M,g,J)$ such that there exists a metric that is c-projectively but not affinely equivalent to $g$.
\end{thm}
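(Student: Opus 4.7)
The plan is to deduce Theorem \ref{thm:hprotrafoeinstein} from Theorem \ref{thm:degreeeinstein} by combining it with the same dictionary between $D(g,J)$ and $\dim(\mathfrak{c}(g,J)/\mathfrak{i}(g,J))$ that underlies the proof of Theorem \ref{thm:hprotrafo}. Recall that in that proof one fixes a nontrivial solution $A$ of \eqref{eq:mainA} and shows that, when $D(g,J)\geq 3$, the map $v\mapsto \mathcal{L}_vA$ is well-defined from $\mathfrak{c}(g,J)/\mathfrak{i}(g,J)$ into the solution space of \eqref{eq:mainA}, is injective, and has image of codimension exactly one (the missing direction being the line spanned by $g$ itself, which corresponds to rescaling). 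This yields the identity $\dim(\mathfrak{c}(g,J)/\mathfrak{i}(g,J))=D(g,J)-1$ whenever $D(g,J)\geq 3$. For $D(g,J)=2$ only the values $0$ and $1$ are possible, depending on whether the nontrivial c-projectively equivalent metric is generated by a one-parameter group of c-projective transformations.

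Inserting the admissible values of $D(g,J)$ listed in Theorem \ref{thm:degreeeinstein} into this dictionary immediately produces the list claimed: the value $D(g,J)=2$ yields $0$ or $1$; the values $D(g,J)=k^2+\ell$ with $k=0,\dots,n-2$ and $\ell=1,\dots,\bigl[(n+1-k)/3\bigr]$ yield $k^2+\ell-1$; and $D(g,J)=(n+1)^2$ yields $(n+1)^2-1$. This establishes the first half of the theorem.

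For the converse part, each value must be realized by a Kähler--Einstein manifold. The top value $(n+1)^2-1$ is realized by $(\mathbb{C}P(n),c\cdot g_{FS},J_{\text{std}})$. For the intermediate values I would reuse the Kähler--Einstein examples constructed in the proof of Theorem \ref{thm:degreeeinstein} via the conification construction of \S \ref{sec:construction}: these examples carry c-projective vector fields lifted from the isometry algebra of the building blocks of the construction, and one verifies that none of them is Killing on the conified space, so that the dictionary above attains the predicted dimension $k^2+\ell-1$. The values $0$ and $1$ are realized by Kähler--Einstein manifolds of degree of mobility $2$ whose unique (up to scale) nontrivial c-projective partner is, respectively, not generated by a flow, or generated by a one-parameter flow of c-projective transformations; explicit candidates can be singled out inside the same family of conification examples by controlling the isometry algebra of the base.

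The main technical obstacle will be this realization step for the intermediate values: one needs to check that on the Einstein conification examples every solution of \eqref{eq:mainA} is of the form $\mathcal{L}_vA$ for some c-projective $v$, so that the injection $v\mapsto \mathcal{L}_vA$ actually fills out the codimension-one subspace of the solution space. This reduces to describing the full c-projective vector field algebra of the conified Einstein manifold in terms of data on the base of the construction, which is made tractable by the explicit product-type nature of the conification and its well-understood isometry group. Once this is in place, the Einstein case follows exactly as in Theorem \ref{thm:hprotrafo} but with the Einstein-restricted list replacing the general one.
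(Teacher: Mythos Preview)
Your overall strategy---reducing to Theorem \ref{thm:degreeeinstein} via a dictionary $\dim(\mathfrak{c}(g,J)/\mathfrak{i}(g,J))=D(g,J)-1$ when $D(g,J)\geq 3$---is the same as the paper's. But your implementation of that dictionary is not the paper's, and as stated it does not work.

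The map you propose, $v\mapsto \mathcal{L}_v A$ for a fixed nontrivial $A\in\mathcal{A}(g,J)$, does \emph{not} descend to $\mathfrak{c}(g,J)/\mathfrak{i}(g,J)$: a Killing vector field for $g$ need not annihilate $A$ (already on $\mathbb{C}P(n)$, a generic element of $\mathfrak{su}(n+1)$ moves a generic solution $A$). The paper instead uses the map $f(v)=-\tfrac{1}{2}\bigl(\mathcal{L}_v g-\tfrac{\mathrm{trace}(g^{-1}\mathcal{L}_v g)}{2(n+1)}g\bigr)\bmod \mathbb{R}g$, whose kernel is the algebra of \emph{homothetic} vector fields. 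The crucial step you are missing is Lemma \ref{-5}: one needs the hypothesis $B\neq 0$ in \eqref{eq:hprosystem} to argue (i) that every homothety is an isometry, so that the kernel is $\mathfrak{i}(g,J)$, and (ii) that the sequence splits via the explicit map $h:A\mapsto \Lambda=g^{-1}\lambda$, which is a c-projective vector field by \eqref{eq:hprosystem}. Without $B\neq 0$ the equality $D(g,J)-1=\dim(\mathfrak{c}/\mathfrak{i})$ can fail, and in the Einstein setting $B=0$ occurs precisely when $g$ is Ricci-flat (Lemma \ref{lem:scalarB}). The paper handles $B=0$ by passing, on each ball with compact closure, to a c-projectively equivalent metric with $\tilde B\neq 0$ (Corollaries \ref{cor:changeofmetric}, \ref{cor:changeofmetriceinstein}), together with a lemma that $\dim\mathfrak{i}(g,J)$ is invariant under c-projective equivalence; then Lemma \ref{lem:extensiontoglobal} globalizes.

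For the realization part, your plan is much heavier than necessary. Once Lemma \ref{-5} is in place, the examples constructed in Section \ref{sec:realization} automatically satisfy $\dim(\mathfrak{c}/\mathfrak{i})=D(g,J)-1$ because they have $B=-1\neq 0$ by construction; there is no need to ``describe the full c-projective vector field algebra of the conified Einstein manifold''. The one genuinely delicate realization is $\dim(\mathfrak{c}/\mathfrak{i})=1$ in dimension $2n=4$, where Section \ref{sec:realization} only produces constant holomorphic curvature; the paper exhibits an explicit Ricci-flat (non-flat) local K\"ahler metric with a non-Killing homothety to cover this case.
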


We see that the main  difference in the lists of Theorems \ref{thm:degree} and  \ref{thm:degreeeinstein} respectively Theorems \ref{thm:hprotrafo} and  \ref{thm:hprotrafoeinstein} is that in the general case we divide ${n+1-k}$ by $2$ and in the Einstein case we divide ${n+1-k}$ by $3$. The additional difference is that $k$ goes up to $n-1$ in the general case and up to $n-2$ in the Einstein case. 
As a by-product we also obtain that   in dimension $2n=4$, two c-projectively equivalent Riemannian  Einstein metrics that are not affinely equivalent must be of constant holomorphic  curvature;  this result was known before and is in \cite{haddad1}. 

Under the assumption that the manifold is closed, the list of the degrees of mobility  is  again much more simple as the next theorem shows:

\begin{thm}\label{thm:einstein}
Suppose $g$ and $\tilde{g}$ are c-projectively equivalent K\"ahler-Einstein metrics of arbitrary signature on a closed connected complex manifold $(M,J)$ of real dimension $2n\geq 4$.
Then, $g$ and $\tilde{g}$ are affinely equivalent unless  $(M,g,J)$ is $(\mathbb{C}P(n),\mbox{const}\cdot g_{FS},J_{standard})$.
\end{thm}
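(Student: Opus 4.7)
By the correspondence recalled in Section \ref{sec:basics}, c-projective equivalence of $g$ and $\tilde g$ corresponds to a nondegenerate Hermitian symmetric $(0,2)$-tensor $A$ satisfying \eqref{eq:mainA}, and affine equivalence corresponds exactly to $A$ being parallel, i.e.\ to $\lambda\equiv 0$. The statement thus reduces to showing that, on a closed connected K\"ahler-Einstein manifold $(M,g,J)$ of arbitrary signature, every solution $A$ of \eqref{eq:mainA} with $\lambda\not\equiv 0$ forces $(M,g,J)=(\mathbb{C}P(n),\mathrm{const}\cdot g_{FS},J_{standard})$.

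The first step is to prolong \eqref{eq:mainA} using the Einstein condition $\mathrm{Ric}(g)=\kappa g$. Differentiating \eqref{eq:mainA} once more and commuting covariant derivatives via the Ricci identity, the full Riemann tensor of $g$ that would otherwise appear reduces, after using the K\"ahler symmetries and $\mathrm{Ric}=\kappa g$, to purely algebraic expressions in $A$, $g$ and $\kappa$. The expected outputs are that $\lambda=d\mu$ with $\mu=\tfrac{1}{4}\mathrm{trace}_g A$ and a Hessian identity of the schematic form
\begin{equation*}
\nabla_X d\mu = B_1\,\mu\cdot X^\flat + B_2\cdot A(X,\cdot\,),
\end{equation*}
for constants $B_1,B_2$ depending only on $\kappa$ and the dimension. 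This closes $(A,\lambda,\mu)$ into a linear ODE system along every curve, and a short further calculation should produce from $A$, $\mu g$ and $g$ a third linearly independent solution of \eqref{eq:mainA}, giving the bound $D(g,J)\ge 3$.

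In the Riemannian case this already closes the argument: the main theorem of \cite{FKMR} states that any closed Riemannian K\"ahler manifold with $D(g,J)\ge 3$ must be $(\mathbb{C}P(n),\mathrm{const}\cdot g_{FS},J_{standard})$. The main obstacle is the arbitrary-signature case, where Obata/Yano-type maximum-principle arguments are unavailable. For this case the plan is to apply the conification construction of Section \ref{sec:construction} to $(g,A)$: the K\"ahler-Einstein condition on $M$ should translate into a Ricci-flat / special-holonomy condition on the $(2n+2)$-dimensional cone $\widehat{M}$, and the extra c-projective data into further parallel structures on $\widehat{M}$. Closedness of $M$ globalizes these structures sufficiently to force $\widehat{M}$ to be flat; pulling this back yields constant holomorphic sectional curvature on $M$, and the classification of closed connected K\"ahler manifolds of constant holomorphic curvature (the flat and hyperbolic models being excluded since they admit no globally defined nonparallel $A$ satisfying \eqref{eq:mainA}) identifies $(M,g,J)$ with $(\mathbb{C}P(n),\mathrm{const}\cdot g_{FS},J_{standard})$.
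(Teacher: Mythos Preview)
Your proposal has a genuine gap at the crucial step: the claim that one can produce a \emph{third} linearly independent solution of \eqref{eq:mainA} and thereby force $D(g,J)\ge 3$. The prolongation you describe (which is essentially Lemma~\ref{lem:einsteincproeinstein2} in the paper) shows only that the \emph{particular} solution $A=A(g,\tilde g)$, together with its $\lambda$ and a suitable $\mu$, satisfies the closed system \eqref{eq:hprosystem} with $B=-\tfrac{\mathrm{Scal}(g)}{4n(n+1)}$. This gives no new element of $\mathcal{A}(g,J)$ beyond $g$ and $A$; in particular $\mu g$ is not a solution of \eqref{eq:mainA} when $\mu$ is nonconstant. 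The paper explicitly treats the case $D(g,J)=2$ (see the last paragraph of the proof of Lemma~\ref{lem:einsteincproeinstein2}), so the inequality $D\ge 3$ need not hold and the appeal to \cite{FKMR} is not available. Your arbitrary-signature branch is also not a proof as written: the conification $\hat M$ is never closed, so ``closedness of $M$ globalizes these structures sufficiently to force $\hat M$ to be flat'' is an assertion without mechanism.

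The paper's argument is quite different and bypasses both the degree-of-mobility route and the conification construction. From the extended system \eqref{eq:hprosystem} one first handles $B=0$ by a short maximum-principle argument on the closed manifold: $\mu$ is constant, $\nabla\lambda=\mu g$ is parallel, and evaluating at a maximum/minimum of the potential of $\lambda$ forces $\mu=0$, hence $\lambda\equiv 0$ (affine equivalence). For $B\ne 0$, one differentiates \eqref{eq:hprosystem} once more to obtain the third-order Tanno-type equation \eqref{eq:tanno} for the nonconstant function $\mu$, and then invokes the classical results of Hiramatu and Tanno \cite{HiramatuK,Tanno1978}: on a closed connected K\"ahler manifold, a nonconstant solution of \eqref{eq:tanno} forces $B<0$ and constant holomorphic sectional curvature $-4B$. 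This works in arbitrary signature and does not require $D(g,J)\ge 3$.
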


Note that as examples constructed in \cite{ApostolovII} show, the assumption that the second metric is also Einstein is important for Theorem \ref{thm:einstein}.

As a by-product, in the proof of Theorems  \ref{thm:hprotrafoeinstein} and  \ref{thm:einstein} we obtain the following

\begin{thm} \label{+2} {Assume two nonproportional  Kähler-Einstein metrics (of arbitrary signature) on a complex $2n\ge 4$-dimensional  manifold $(M,J)$ 
are c-projectively equivalent. Then, any  Kähler metric that is c-projectively equivalent to them is also Einstein.} \end{thm}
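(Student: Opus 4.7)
The plan is to exploit the linear structure of the solution space of \eqref{eq:mainA}: I isolate a linear subspace cut out by the Einstein condition, and then argue it must coincide with the full solution space once it has dimension $\geq 2$.

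First, I recall the standard prolongation of \eqref{eq:mainA}: the space $V$ of solutions $A$ is a finite-dimensional linear space of dimension $D(g,J)$, and each $A$ extends canonically to a triple $(A,\lambda,\mu)$, where $\lambda=\tfrac{1}{4}d\,\trace A$ and $\mu$ is determined by a further differentiation. The prolongation is a linear operation on $V$, so linear combinations of solutions correspond to linear combinations of prolongation data. Denote by $A_0=g$ and $A_1$ the solutions of \eqref{eq:mainA} corresponding to $g$ and $\tilde g$; since $g$ and $\tilde g$ are not proportional, $A_0$ and $A_1$ are linearly independent in $V$.

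The crucial technical step is the following claim: when $g$ is K\"ahler-Einstein, the condition that the K\"ahler metric $\hat g(A)$ corresponding to a nondegenerate $A$ be Einstein is equivalent to a linear equation $L(A,\lambda,\mu)=0$ on the prolongation data, with coefficients depending only on $g$. To establish this, I would apply the c-projective transformation formula for the Ricci tensor, use \eqref{eq:mainA} to replace derivatives of $A$ by algebraic expressions in $A$ and $\lambda$, and exploit $\mathrm{Ric}(g)=cg$ to cancel the \emph{a priori} nonlinear-in-$A$ contributions. This is the most delicate part of the argument, and indeed the main obstacle of the whole proof: the map $A\mapsto\hat g(A)$ is itself manifestly nonlinear, and one really needs the Einstein hypothesis on $g$ to suppress the nonlinearities of the Ricci transformation law.

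Having $L$ in hand, the two hypotheses translate to $L(A_0,\lambda_0,\mu_0)=0$ and $L(A_1,\lambda_1,\mu_1)=0$, so $\ker L\subseteq V$ is a linear subspace of dimension at least $2$. To upgrade this to $\ker L=V$, I would combine the dimension classification for Einstein K\"ahler metrics in Theorems~\ref{thm:degreeeinstein} and~\ref{thm:hprotrafoeinstein}: the possible dimensions of the subspace of $V$ corresponding to Einstein metrics in the c-projective class are constrained exactly by those lists, and a case-by-case comparison rules out every proper subspace of $V$ of dimension $\geq 2$ in which both $A_0$ and $A_1$ could simultaneously sit. Consequently $\ker L=V$, every nondegenerate $A\in V$ corresponds to an Einstein metric, and every K\"ahler metric c-projectively equivalent to $g$ is Einstein, as claimed.
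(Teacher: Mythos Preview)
Your strategy has a genuine gap in the final step. You want to pass from $\dim\ker L\geq 2$ to $\ker L=V$ by invoking Theorems~\ref{thm:degreeeinstein} and~\ref{thm:hprotrafoeinstein}, but those theorems constrain only the full degree of mobility $D(g,J)=\dim V$; they say nothing whatsoever about the possible dimensions of \emph{subspaces} of $V$ cut out by auxiliary linear conditions. There is no ``case-by-case comparison'' available: if for instance $D(g,J)=5$, nothing in those lists prevents a $2$-dimensional $\ker L$. Moreover, Theorems~\ref{thm:degreeeinstein} and~\ref{thm:hprotrafoeinstein} are proved only for Riemannian metrics on simply connected manifolds, whereas Theorem~\ref{+2} is asserted in arbitrary signature with no topological hypothesis, so even if the lists carried the information you want, you could not appeal to them here.

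There is also an imprecision in your first step: the $\mu$ appearing in \eqref{eq:hprosystem} is \emph{not} obtained from $A$ by a universal linear prolongation of \eqref{eq:mainA}; it is an additional unknown, and the second equation of \eqref{eq:hprosystem} is an extra constraint that may or may not be satisfiable. The paper's route avoids this entirely. First, Lemma~\ref{lem:einsteincproeinstein2} shows that if $g$ and $\tilde g$ are both Einstein then the extended system \eqref{eq:hprosystem} holds for \emph{every} $A\in\mathcal{A}(g,J)$: one computes directly that $(A(g,\tilde g),\lambda,\mu)$ satisfies \eqref{eq:hprosystem}, and then extends to all of $\mathcal{A}(g,J)$ by linearity when $D(g,J)=2$ and by the structural Theorem~\ref{thm:hprosystem} when $D(g,J)\geq 3$. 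This is the step that replaces your unjustified dimension argument. Second, Lemma~\ref{lem:einsteincproeinstein} shows that once \eqref{eq:hprosystem} holds (with some $\lambda\neq 0$), the Ricci transformation law collapses to $\mathrm{Ric}(\hat g)+2(n+1)\hat B\,\hat g=\mathrm{Ric}(g)+2(n+1)B\,g=0$ for every c-projectively equivalent $\hat g$, forcing $\hat g$ to be Einstein. The missing ingredient in your argument is precisely Theorem~\ref{thm:hprosystem}: it is what promotes the single Einstein witness $A(g,\tilde g)$ to the full space $V$.
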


Modulo Theorem \ref{+2}   and under the additional assumption that $g$ is Riemannian,  Theorem \ref{thm:einstein} follows from known global results in Kähler-Einstein geometry\footnote{We are grateful to D. Calderbank and C. T\o nnesen-Friedman for pointing this out to us.}. 
More precisely,  if the first Chern class $c_1(M)$ is nonpositive, Theorem \ref{thm:einstein} follows from  \cite[Theorem 1]{Sinjukov2}. The proof of  \cite{Sinjukov2} is a  standard application of the  Weitzenböck formula which shows nonexistence of a Hamiltonian Killing vector field, see also  \cite[page 89]{besse} or \cite[page 77]{moroianu} (it is known that   nonaffine c-projective equivalence implies the existence of a  Hamiltonian Killing vector field, see e.g. \cite{ApostolovI,Kiyohara2010}). 

If $c_1(M)>0$, then    by Bando and Mabuchi   \cite{bandomabuchi} (see also  \cite[Addendum D]{besse}), for any two K\"ahler-Einstein metrics $g,\tilde{g}$ on a closed connected complex manifold $(M,J)$,
 there exists a bi-holomorphism $f:M\rightarrow M$, contained in the connected component of the group of bi-holomorphic transformations of $(M,J)$, such that $f^* g=\textrm{const} \cdot \tilde{g}$. This bi-holomorphism is of course a c-projective transformation of $g$. 
  Now, by Theorem \ref{+2}, we have a one-parameter family of c-projectively equivalent  Einstein metrics which gives us a one-parameter family of such bi-holomorphisms.  
  By the standard rigidity argument, we obtain then  the existence a nontrivial (i.e., containing not only isometries) 
    connected Lie group of c-projective  transformations and  \cite[Theorem 1]{MatRos} implies that the manifold is $(\mathbb{C}P(n),\mbox{const}\cdot g_{FS},J_{standard})$.

As a direct corollary of Theorem \ref{thm:einstein} we obtain  
\begin{cor}[Generalization of the Yano-Obata conjecture to metrics of arbitrary siganture under the additional assumption that the metrics are Einstein]
Let $(M,g,J)$ be a closed Kähler-Einstein  manifold of arbitrary signature of real dimension $2n\geq 4$. Then, every c-projective vector field is an affine (i.e., connection-preserving) vector field unless $(M,g,J)=(\mathbb{C}P(n),\mbox{const}\cdot g_{FS},J_{standard})$.    
\end{cor}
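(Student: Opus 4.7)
The plan is to deduce the corollary directly from Theorem \ref{thm:einstein} by running the flow of a c-projective vector field and treating the one-parameter family of pullback metrics it generates. Let $v$ be any c-projective vector field on the closed Kähler-Einstein manifold $(M,g,J)$. Since $M$ is closed, $v$ is complete and its flow $\phi_t:M\to M$ is globally defined for every $t\in\mathbb{R}$. By the very definition of a c-projective vector field, each diffeomorphism $\phi_t$ is a c-projective transformation; in particular $\phi_t$ is holomorphic and $g_t:=\phi_t^{*}g$ is a metric c-projectively equivalent to $g$.

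First I would verify that every member of the family $g_t$ is again Kähler-Einstein. Since $\phi_t$ preserves $J$, the tensor $g_t$ is $J$-hermitian; the pullback of a closed form is closed, so the associated Kähler form $\phi_t^{*}\omega$ is closed, and $(M,g_t,J)$ is Kähler. The Einstein property is diffeomorphism-invariant: if $\mathrm{Ric}(g)=c\cdot g$, then $\mathrm{Ric}(g_t)=\phi_t^{*}\mathrm{Ric}(g)=c\cdot g_t$. Thus $g$ and $g_t$ are two c-projectively equivalent Kähler-Einstein metrics on the same closed complex manifold $(M,J)$, and they have the same signature as $g$ — exactly the situation covered by Theorem \ref{thm:einstein} in arbitrary signature.

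Next I would apply Theorem \ref{thm:einstein} to the pair $(g,g_t)$. It yields a dichotomy: either $(M,g,J)=(\mathbb{C}P(n),\mathrm{const}\cdot g_{FS},J_{\mathrm{standard}})$, in which case the exceptional case of the corollary holds and there is nothing more to prove, or $g_t$ is affinely equivalent to $g$ for every $t$. In the latter case, since the Levi-Civita connection of $g_t=\phi_t^{*}g$ is the pullback connection $\phi_t^{*}\nabla$, affine equivalence is the statement $\phi_t^{*}\nabla=\nabla$; that is, $\phi_t$ preserves the Levi-Civita connection of $g$ for every $t$. Differentiating this identity at $t=0$ gives $L_v\nabla=0$, which is precisely the definition of $v$ being an affine vector field.

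There is no real obstacle beyond invoking Theorem \ref{thm:einstein}, whose proof carries all the content. The one point one must be careful about is that Theorem \ref{thm:einstein} is applied in arbitrary signature (to accommodate the hypothesis of the corollary), and that the dichotomy is applied for \emph{every} value of $t$: the moment some $\phi_{t_0}$ produces a metric $g_{t_0}$ that is c-projectively but not affinely equivalent to $g$, Theorem \ref{thm:einstein} forces $(M,g,J)$ to be the standard Fubini--Study complex projective space, so the two alternatives exhaust all cases uniformly in $t$.
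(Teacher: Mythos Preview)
Your proposal is correct and is precisely the natural elaboration of what the paper intends: the paper states the result as a direct corollary of Theorem~\ref{thm:einstein} without writing out any proof, and your argument---pulling back $g$ by the flow of $v$, observing that each $\phi_t^{*}g$ is again K\"ahler--Einstein and c-projectively equivalent to $g$, and invoking Theorem~\ref{thm:einstein} to force $\phi_t^{*}\nabla=\nabla$ for all $t$---is exactly the intended derivation.
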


\subsection{Relation to hamiltonian and conformal Killing $2$-forms}\label{sec:reltoothers}

Let $(M,g,J)$ be a K\"ahler manifold of real dimension $2n$ with K\"ahler form $\omega=g(.,J.)$. A hermitian $2$-form $\phi$ is called \emph{hamiltonian $2$-form} if
\begin{align}
\nabla_X \phi=X^\flat\wedge J\lambda+(JX)^\flat\wedge\lambda\label{eq:hamiltonian2form}
\end{align}
for a certain $\lambda\in \Omega^1(M)$, where we define $J\lambda=\lambda\circ J$. It is straight-forward to see that $\lambda=\tfrac{1}{4}d\,\mathrm{trace}_\omega \phi$, where $\mathrm{trace}_\omega \phi=\sum_{i=1}^{2n}\phi(Je_i,e_i)$ for an orthonormal frame $e_1,...,e_{2n}$. Hamiltonian $2$-forms where introduced in \cite{Apostolov0} and have been studied further in \cite{ApostolovI,ApostolovII,ApostolovIII,ApostolovIV}. Among other interesting results and applications in K\"ahler geometry \cite{ApostolovIII,ApostolovIV}, a complete local \cite{ApostolovI} and global \cite{ApostolovII} classification of Riemannian K\"ahler manifolds admitting hamiltonian $2$-forms have been obtained.

Let $\phi$ be a hamiltonian $2$-form and consider the corresponding symmetric hermitian $(0,2)$-tensor $A=\phi(J.,.)$. It is easy to see that this correspondence sends solutions of \eqref{eq:hamiltonian2form} to solutions of \eqref{eq:mainA} and vice versa, thus hamiltonian $2$-forms and hermitian symmetric solutions of \eqref{eq:mainA}, i.e. c-projectively equivalent K\"ahler metrics, are essentially the same objects. We immediately obtain from Theorems \ref{thm:degree} and \ref{thm:degreeeinstein} the next two corollaries.
\begin{cor}\label{cor:hamilt}
Let $(M,g,J)$ be a simply connected Riemannian K\"ahler manifold of real dimension $2n\geq 4$. Suppose there exist at least one hamiltonian $2$-form on $M$ that is not parallel. 

Then, the dimension of the space of hamiltonian $2$-forms belongs to the following list: 
\begin{itemize}
\item $2$,
\item $k^2+\ell$, where $k=0,...,n-1$ and $\ell=1,...,\big[\tfrac{n+1-k}{2}\big]$.
\item $(n+1)^2$,
\end{itemize}

Moreover, every  value  from  this  list that is greater than or equal to $2$ is the dimension of the space of hamiltonian $2$-forms of a certain simply connected  $2n$-dimensional K\"ahler manifold $(M, g,J)$ that admits a non-parallel hamiltonian $2$-form.
\end{cor}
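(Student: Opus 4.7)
The plan is to deduce Corollary \ref{cor:hamilt} from Theorem \ref{thm:degree} via the linear bijection between hamiltonian $2$-forms and hermitian symmetric solutions of \eqref{eq:mainA} that is sketched just before the corollary. Concretely, I would work with the map $\Phi \colon \phi \mapsto A$, where $A(X,Y) = \phi(JX,Y)$. Since $J$ is parallel, $\Phi$ is a linear isomorphism between the space of hermitian $2$-forms and the space of hermitian symmetric $(0,2)$-tensors on $M$, and a direct substitution shows that \eqref{eq:hamiltonian2form} for $\phi$ with $1$-form $\lambda$ is equivalent to \eqref{eq:mainA} for $A = \Phi(\phi)$ with the same $\lambda$ (note $\lambda = \tfrac14 d\,\mathrm{trace}_\omega\phi = \tfrac14 d\,\mathrm{trace}\,A$ in both descriptions, since $\phi(Je_i,e_i)$ equals $A(e_i,e_i)$).

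Next I would translate the hypothesis and the quantity of interest. By the discussion following equation \eqref{eq:mainA} in Section \ref{sec:basics}, hermitian symmetric solutions $A$ of \eqref{eq:mainA} are in one-to-one correspondence with K\"ahler metrics c-projectively equivalent to $g$, and under this correspondence the metrics that are affinely equivalent to $g$ are precisely those coming from solutions with $\lambda \equiv 0$, equivalently from parallel $A$, equivalently from parallel $\phi$. Therefore ``there exists a non-parallel hamiltonian $2$-form'' is exactly the assumption of Theorem \ref{thm:degree} that there exists a c-projectively but not affinely equivalent metric, and the dimension of the space of hamiltonian $2$-forms equals $D(g,J)$.

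With these translations in place, the list of admissible dimensions is immediate from the first part of Theorem \ref{thm:degree}. For the ``moreover'' part I would invoke the realization statement of Theorem \ref{thm:degree}: for each admissible value $d \ge 2$, there is a simply connected $2n$-dimensional K\"ahler manifold $(M,g,J)$ with $D(g,J)=d$ admitting a c-projectively but not affinely equivalent metric. Choosing any non-parallel solution $A$ of \eqref{eq:mainA} on this manifold and setting $\phi = A(\,\cdot\,,J\,\cdot\,) = \Phi^{-1}(A)$ produces a non-parallel hamiltonian $2$-form, and the bijection $\Phi$ shows that the space of hamiltonian $2$-forms on this $(M,g,J)$ has dimension exactly $d$.

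There is essentially no serious obstacle here: the argument is a bookkeeping exercise once the correspondence $\Phi$ and the characterization of the ``affinely equivalent'' locus as the parallel locus are set up carefully. The only point that warrants checking is that $\Phi$ really does identify the $\lambda$ appearing in \eqref{eq:hamiltonian2form} with the $\lambda$ appearing in \eqref{eq:mainA} (so that the notions of ``parallel'' on both sides coincide); this follows from a short calculation using $\nabla J = 0$ and the symmetry/hermiticity of $A$, together with the identities $X^\flat\wedge J\lambda + (JX)^\flat\wedge\lambda$ being the antisymmetrization in $X,Y$ of the right-hand side of \eqref{eq:mainA} after raising an index with $J$.
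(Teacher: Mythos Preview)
Your proposal is correct and is exactly the approach the paper takes: the paper states that the correspondence $\phi\mapsto A=\phi(J\cdot,\cdot)$ identifies hamiltonian $2$-forms with hermitian symmetric solutions of \eqref{eq:mainA}, so that Corollary~\ref{cor:hamilt} follows immediately from Theorem~\ref{thm:degree}. Your write-up simply makes explicit the checks (matching of the $1$-forms $\lambda$, translation of ``non-parallel'' into ``not affinely equivalent'') that the paper leaves implicit.
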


\begin{cor}\label{cor:hamilteinstein}
Let $(M,g,J)$ be a simply connected Riemannian K\"ahler-Einstein manifold of real dimension $2n\geq 4$. Assume that there exists at least one  
hamiltonian $2$-form on $M$  that is not parallel. Then, the dimension of the space of hamiltonian $2$-forms is equal to one of the following numbers:
\begin{itemize}
\item $2$,
\item $k^2+\ell$, where $k=0,...,n-2$ and $\ell=1,...,\big[\tfrac{n+1-k}{3}\big]$,
\item $(n+1)^2$. 
\end{itemize}

Conversely, each of the numbers of the above list that is greater than or equal to $2$ is the dimension of the space of hamiltonian $2$-forms of a certain 
simply connected  Riemannian  K\"ahler-Einstein manifold $(M^{2n},g,J)$ that admits a nonparallel  hamiltonian $2$-form.
\end{cor}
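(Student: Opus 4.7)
The plan is to derive Corollary \ref{cor:hamilteinstein} directly from Theorem \ref{thm:degreeeinstein} by exploiting the correspondence
\[
\phi \;\longleftrightarrow\; A := \phi(J\cdot,\cdot)
\]
between hermitian $2$-forms and hermitian symmetric $(0,2)$-tensors, which the excerpt recalls sends hamiltonian $2$-forms (solutions of \eqref{eq:hamiltonian2form}) bijectively onto the solutions of the c-projective equation \eqref{eq:mainA}. This map is manifestly $\R$-linear, so the dimension of the space of hamiltonian $2$-forms coincides with the dimension of the full solution space of \eqref{eq:mainA}, i.e.\ with the degree of mobility $D(g,J)$.

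The first step is to verify (using $\nabla J=0$) that $\phi$ satisfies \eqref{eq:hamiltonian2form} with the $1$-form $\lambda$ if and only if $A=\phi(J\cdot,\cdot)$ satisfies \eqref{eq:mainA} with the same $\lambda$. This is a routine symmetrization argument: one expresses $X^\flat\wedge J\lambda+(JX)^\flat\wedge\lambda$ in terms of evaluations on $Y,Z$, applies $J$ on one slot, and matches the result with the right-hand side of \eqref{eq:mainA}. Since the correspondence is a bijection, the dimensions of the respective solution spaces agree.

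The second step is to translate the nontriviality hypotheses of the two statements into each other: existence of a non-parallel hamiltonian $2$-form is equivalent to existence of a K\"ahler metric c-projectively but not affinely equivalent to $g$. If $\tilde g$ is c-projectively equivalent but not affinely equivalent, the associated nondegenerate solution $A$ of \eqref{eq:mainA} is not parallel (otherwise the connections would coincide), hence the associated $\phi$ is non-parallel. Conversely, if $\phi$ is a non-parallel hamiltonian $2$-form then the corresponding $A$ solves \eqref{eq:mainA} with $\nabla A\not\equiv 0$; although $A$ need not be nondegenerate, the tensor $A+c\,g$ solves \eqref{eq:mainA} with the same $\lambda$ for every $c\in\R$ (since $g$ itself solves \eqref{eq:mainA} with $\lambda=0$), and for $c$ sufficiently large it is positive definite and in particular nondegenerate, while still non-parallel, and therefore defines a c-projectively but not affinely equivalent K\"ahler metric.

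With these two steps in place, both halves of Corollary \ref{cor:hamilteinstein} reduce to the two halves of Theorem \ref{thm:degreeeinstein}: the admissibility list for the dimension of the space of hamiltonian $2$-forms becomes the admissibility list for $D(g,J)$, and a realizing K\"ahler-Einstein manifold for a given value $D\geq 2$ in Theorem \ref{thm:degreeeinstein} automatically realizes the same value for the dimension of the hamiltonian $2$-form space (any non-constant-multiple-of-$g$ solution of \eqref{eq:mainA} provides the required non-parallel $\phi$). There is no substantial obstacle here: the only delicate point is the forward implication in the equivalence of nontriviality hypotheses, which is handled by the ``shift by $cg$'' trick above; once that is in hand, the corollary is an immediate transcription of Theorem \ref{thm:degreeeinstein}.
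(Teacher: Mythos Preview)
Your proposal is correct and follows the same route as the paper, which simply states that Corollaries \ref{cor:hamilt} and \ref{cor:hamilteinstein} are obtained ``immediately'' from Theorems \ref{thm:degree} and \ref{thm:degreeeinstein} via the linear bijection $\phi\leftrightarrow A=\phi(J\cdot,\cdot)$ between hamiltonian $2$-forms and hermitian symmetric solutions of \eqref{eq:mainA}; you have merely spelled out the details (in particular the ``shift by $c\,g$'' argument, which is Remark \ref{rem:metricsol} in the paper). One small imprecision: your parenthetical ``any non-constant-multiple-of-$g$ solution of \eqref{eq:mainA} provides the required non-parallel $\phi$'' is not literally true (there can be parallel solutions not proportional to $g$), but this does not affect the argument since the realization part of Theorem \ref{thm:degreeeinstein} already hands you a metric $\tilde g$ that is c-projectively but not affinely equivalent to $g$, and the corresponding $A(g,\tilde g)$ is non-parallel by Remark \ref{rem:affine}.
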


Following \cite[Appendix A]{ApostolovI} and \cite{moruianusemmelmann}, we briefly recall the relation between hamiltonian $2$-forms and conformal Killing or twistor $2$-forms studied in \cite{moruianusemmelmann,semmelmann}. On a $m$-dimensional Riemannian manifold $(M,g)$ a \emph{conformal Killing or twistor $2$-form} is a $2$-form $\psi$ satisfying

\begin{align}
\nabla_X \psi=X^\flat\wedge \alpha+i_X \beta\label{eq:defkilling2form}
\end{align}
for certain $\alpha\in \Omega^1(M)$ and $\beta\in \Omega^3(M)$. It is straight-forward to see that 
$$\beta=\frac{1}{3}d\psi\mbox{ and }\alpha=-\frac{1}{n-1}\delta \psi,$$
where $(\delta \psi)(X)=\sum_{i=1}^m (\nabla_{e_i}\psi)(X,e_i)$ for an orthonormal frame $e_1,...,e_m$.

Now assume that $(M,g,J)$ is a K\"ahler manifold of real dimension $m=2n$, with K\"ahler form $\omega=g(.,J.)$. Then, as shown in \cite[Appendix A]{ApostolovI} and \cite[Lemma 3.11]{moruianusemmelmann}, the defining equation \eqref{eq:defkilling2form} for a conformal Killing $2$-form which in addition is assumed to be hermitian, equivalently reads
\begin{align}
\nabla_X \psi=X^\flat\wedge \alpha-(JX)^\flat\wedge J\alpha+J\alpha(X)\omega.\label{eq:defkilling2formherm}
\end{align}

On the other hand, setting $\alpha'=J\lambda$, \eqref{eq:hamiltonian2form} becomes
\begin{align}
\nabla_X \phi=X^\flat\wedge \alpha'-(JX)^\flat\wedge J\alpha'\label{eq:hamiltonian2form2}
\end{align}
Thus, if $\phi$ is a hamiltonian $2$-form we set $f_\lambda=\tfrac{1}{4}\mathrm{trace}_\omega \phi$ and see that 
\begin{align}
\psi=\phi-f_\lambda\omega\label{eq:hamiltoconf}
\end{align}
is a conformal Killing $2$-form with $\alpha=\alpha'$. Conversely, let $\psi$ be a conformal Killing $2$-form. In the case $2n>4$, we have $J\alpha=df_\alpha$, where $f_\alpha=\tfrac{1}{(2n-4)}\mathrm{trace}_\omega\psi$, see \cite[Appendix A]{ApostolovI} and \cite[Lemma 3.8]{moruianusemmelmann}. Then,
\begin{align}
\phi=\psi-f_\alpha\omega\label{eq:conftohamil}
\end{align}
is a hamiltonian $2$-form with $\alpha'=\alpha$, and the linear mappings in \eqref{eq:hamiltoconf} and \eqref{eq:conftohamil} are inverse to each other. Thus, when $2n>4$, hamiltonian $2$-forms and conformal (hermitian)  Killing $2$-forms are essentially the same objects and Corollaries \ref{cor:hamilt} and \ref{cor:hamilteinstein} also descibe the possible dimensions of the space of  hermitian conformal Killing $2$-forms  for simply-connected Riemannian K\"ahler respectively Riemannian K\"ahler-Einstein manifolds of dimension $2n>4$ that  admit a non-parallel conformal Killing $2$-form.

\subsection{Relation to  parallel $(0,2)$-tensors on the conification and circles of ideas used in  the proof}
\label{sec:ideas}

As we mentioned above and will recall later, the system  of equations  \eqref{eq:mainA} (such that the dimension of the space of solutions  is the degree of mobility) is linear and overdetermined. In theory, there exist algorithmic (sometimes called prolongation-projection or Cartan-Kähler) methods  to  understand the dimension of the space of solutions of such a system. In practice, these methods are effective in small dimensions only, or return the maximal and submaximal values for the possible dimension of the space of solutions. Actually,  the previously known results are obtained by  versions of  these methods.

A key observation that allowed us to solve  the problem in its full generality is that the problem can be  reduced  to the following geometric one:

{\it Find all possible dimensions of the spaces of parallel symmetric hermitian (0,2)-tensors on conifications of Kähler manifolds.}

The reduction goes as follows. First of all, at least for the proof of Theorem \ref{thm:degree}, we may assume that the degree of mobility of $(M^{2n},g,J)$ is $\ge 3$ (since otherwise it is $2$ and this case is clear).   
 
Then, as we recall in Theorem \ref{thm:hprosystem}, the solutions of the system  \eqref{eq:mainA} are in one-one correspondence with the solutions of the system \eqref{eq:hprosystem}. 

Since the construction of the conification $(\hat M^{2n+2}, \hat g, \hat J)$ of $(M^{2n},g,J)$ in general only works locally (we explain this in detail in Section \ref{sec:construction} and Theorem \ref{thm:coneconstruction1}) we apply it assuming  that our manifold is diffeomorphic to a disc. The extension of the results to simply connected manifolds is then a standard application of the theorem of Ambrose-Singer and will be explained in Section \ref{sec:proofthmdegreeglobal}. 

Consider now the number $B$ from \eqref{eq:hprosystem}. If $B=-1$, then the solutions of  \eqref{eq:hprosystem} are essentially parallel symmetric hermitian $(0,2)$-tensors on $(\hat M, \hat g, \hat J)$, see  Theorem \ref{thm:coneconstruction2}. If $B\ne -1$ but $B\ne 0$, one can always make $B=-1$ by an appropriate scaling of the metric $g$. If the initial constant $B$ was positive, then $\hat{g}$ has signature $(2,2n)$.  

The case $B=0$ requires the following additional work: we show that on any simply connected neighborhood such that its closure is compact there exists a c-projectively equivalent Riemannian metric  with $B\ne 0$, see  Section \ref{sec:mainfeatures}. For this new metric the above reduction works. Evidently, c-projectively equivalent metrics have the same  degrees of mobility. Thus, also in this case, instead of solving the initial problem, we  study the possible  dimensions of the space of parallel symmetric hermitian $(0,2)$-tensors on the conification of a Kähler manifold. 
 
To do this, let us first assume, for simplicity and since the ideas will be already visible in this setting, that $B$ is negative so the conification we will work with is a Riemannian manifold. In the final proof we will assume that the signature is $(2,2n)$ which poses additional difficulties. In order to  calculate the dimension of the space of parallel symmetric hermitian $(0,2)$-tensors on the conification $(\hat M, \hat g, \hat J)$, let us consider the (maximal orthogonal) holonomy decomposition of the tangent space  at a certain point of $\hat M$:
\begin{align}
T_p\hat M=T_0\oplus T_1\oplus...\oplus T_\ell\label{eq:decomptangenttrivial-1}.
\end{align}
where $T_0$ is flat (in the sense that the holonomy group acts trivially on it) and all other   $T_i$    are irreducible. Clearly, each $T_i$ is $\hat J$-invariant  so it has even dimension $2k_i$. 

It is well known, at least since de Rham \cite{DR}, that   parallel  symmetric $(0,2)$-tensor fields  on a Riemannian
  manifold $\hat M$ are in one-one correspondence with the $(0,2)$-tensors on $T_p \hat M$ of the form 
\begin{equation}\label{presentation}
\sum_{i,j=1}^{2k_0}c_{ij} \tau_i  \otimes  \tau_j + C_1 g_1 + ...+ C_\ell  g_\ell,  
\end{equation}
where $\{\tau_i\}$ is a basis in $T_0^*$ and $g_i$ is  the restriction of $\hat g$ to $T_i$.  The assumption that the tensor is symmetric and hermitian implies that the $2k_0\times 2k_0$-matrix $c_{ij}$ is symmetric and is  hermitian w.r.t. to the restriction of $\hat J$ to $T_0$. This gives us a $k_0^2 $-dimensional space of such tensors. If  $\hat M$   is flat, i.e.  if  the initial metric $g$ has constant holomorphic  curvature, we  have $k_0= n+1$ (=half of the dimension of the conification of our $2n$-dimensional $M$) which gives us the number $(n+1)^2$. Suppose now that our manifold is not flat so $\ell\ge 1$. We show that the dimension of each $T_i$ for $i\ge 1$  is $\ge 4$ (i.e., that $k_i\ge 2$). The key observation that is used here is that each $T_i$ (or, more precisely, the restriction of $\hat g$ to the integral leafs of the integrable distribution $T_i$) is a cone manifold. Since it is not flat, it  has dimension $>2$ and since the dimension is even it must be $\ge 4$.
 Then, $k=0,...,n-1$ and $\ell$ is at most $ \big[\tfrac{n+1-k}{2}\big]$, because the sum of the dimensions of  all $T_i$ with  $i\ge 1$ is $2(n+1)- 2k_0$ and each $T_i$ ``takes'' at least four  dimensions, see \eqref{+1}.  
            
Now, in the case our manifold is Einstein, we show that the dimension of  each $T_i$ is $\ge 6$. This statement is due to the nonexistence of Ricci-flat but nonflat cones of dimension $4$. Then, $k=0,...,n-2$ and $\ell$ is at most $ \big[\tfrac{n+1-k}{3}\big]$,  which gives us the list from Theorem  \ref{thm:degreeeinstein}.
            
In the case the conification of the manifold has signature $(2,2n)$, we use recent results of \cite[Theorem 5]{FedMat}, where  an analog of \eqref{eq:decomptangenttrivial-1}  was  proven under the additional assumption that our manifold of signature $(2,2n)$ is a cone manifold (and conifications of Kähler manifolds  of dimension $2n$ are indeed cone manifolds over $2n+1$-dimensional manifolds). The additional work is required though since  the number $k$ in this case is not  necessary $\tfrac{\dim(T_0)}{2}$   
but  could also  be $\tfrac{\dim(T_0)}{2} + 1$. In the latter case we show that one of $T_i$ with $i\ge 1$ must necessary have dimension $\ge 6$ (resp. $\ge 8$ in the Einstein  case) and  the list of  possible dimensions of the space of parallel symmetric hermitian $(0,2)$-tensors remains the same.

Let us now touch  the proof of   Theorems \ref{thm:hprotrafo} and \ref{thm:hprotrafoeinstein}. The restriction $\mathrm{dim}(\mathfrak{c}(g,J)/\mathfrak{i}(g,J))\le D(g,J)-1$ is straightforward.  Now, in the case $D(g,J)\geq 3$, under the additional assumption  that the constant $B\ne 0$, we  actually have 
\begin{equation} \label{-2} \mathrm{dim}(\mathfrak{c}(g,J)/\mathfrak{i}(g,J))= D(g,J)-1.\end{equation}  

Indeed, for every solution of \eqref{eq:mainA}  we canonically construct  a c-projective vector field. Two solutions of \eqref{eq:mainA} give the same c-projective vector field if and only if their  difference is 
a multiple of $g$.  The formula  \eqref{-2} gives us the lists from  Theorems \ref{thm:hprotrafo} and \ref{thm:hprotrafoeinstein}. The case when $B=0$ can be reduced to the case  $B\ne 0$ by the same trick as in the proofs of  Theorems \ref{thm:degree} and \ref{thm:degreeeinstein}.

\subsection{Organization of the paper}

In Section \ref{sec:basics} we recall basic statements  in c-projective geometry that were proved before and that will be used in the proof. 

In Section \ref{sec:construction}, we  describe the construction of the conification $(\hat{M},\hat{g},\hat{J})$ of a K\"ahler manifold $(M,g,J)$ and will show that solutions to the system \eqref{eq:hprosystem} with $B=-1$ on $M$ correspond to parallel hermitian symmetric $(0,2)$-tensors on $\hat{M}$.

Section \ref{sec:mainfeatures} is a technical one, its first goal is to explain that w.l.o.g. we can assume that $B$ in \eqref{eq:hprosystem}  is equal to $-1$. The only nontrivial step here is in Section  \ref{sec:proofoflemma} and is  as follows:  if for the initial metric $B=0$, we change  the metric to a c-projectively equivalent one such that $B\neq 0$ for the new metric.  The second goal of Section \ref{sec:mainfeatures}  is to prove the additional statement concerning the case of an Einstein metric. 
Roughly speaking, one  goal is to show that  when we change the metric to  make $B\ne 0$, the metric we obtain is still  Einstein.  Another goal is to show that 
the conification construction applied  to K\"ahler-Einstein metrics gives a Ricci-flat metric.

In Section \ref{sec:proofs} we prove the Theorems \ref{thm:degree} and \ref{thm:degreeeinstein}. In Section \ref{sec:proofthmdegree} we will essentially calculate the possible dimensions of the space of parallel symmetric hermitian $(0,2)$-tensors on K\"ahler manifolds that arise from the conification construction. This will complete the proofs of the Theorems \ref{thm:degree} and \ref{thm:degreeeinstein} in the local situation. In Section \ref{sec:proofthmdegreeglobal} we will extend our local results to the global situation. 
In Section \ref{sec:realization} we complete the proof of Theorem \ref{thm:degree} respectively Theorem \ref{thm:degreeeinstein} and show that each of the values in these theorems is  the degree of mobility of a certain K\"ahler metric respectively K\"ahler-Einstein metric.

In Section \ref{sec:thmhprotrafo} we   prove  Theorems \ref{thm:hprotrafo} and \ref{thm:hprotrafoeinstein} and in  the final Section \ref{sec:thmeinstein}  we prove  Theorem \ref{thm:einstein}.

\section{Basic facts in the theory of c-projectively equivalent metrics}\label{sec:basics}

\subsection{C-projective equivalence of K\"ahler metrics as a system of PDE}
Let $g$ and $\tilde{g}$ be two K\"ahler metrics on the complex manifold $(M,J)$ of real dimension $2n\geq 4$. 

It is well-known, see \cite[equation (1.7)]{Tashiro1956}, that $g$ and $\tilde{g}$ are c-projectively equivalent if and only if for a certain $1$-form $\Phi$, the Levi-Civita connections $\nabla,\tilde{\nabla}$ of $g,\tilde{g}$ respectively satisfy
\begin{align}
\tilde{\nabla}_X Y-\nabla_X Y=\Phi(X)Y+\Phi(Y)X-\Phi(JX)JY-\Phi(JY)JX\label{eq:lchpro}
\end{align}
for all vector fields $X,Y$. In the tensor index notation, \eqref{eq:lchpro} reads
 \begin{equation}\label{M1}
 \tilde \Gamma^i_{jk} - \Gamma^i_{jk}=  \delta^i_j \Phi_k +  \delta^i_k \Phi_j - J^i_{\ j} \Phi_sJ^s_{\ k}  -  J^i_{\ k} \Phi_s J^s_{\ j}.
 \end{equation}
 
 Actually, the $1$-form $\Phi$  is  exact,  i.e., it is the differential of a function, and the function is explicitly given in terms of $g$ and $\tilde g$. Indeed, contracting \eqref{M1} w.r.t. $i$ and $k$, we obtain 
 $$
 \tfrac{1}{2}\partial_i\ln \left(\frac{\det \tilde g}{ \det g}\right)= 2(n+1) \Phi_i  
 $$
 so $\Phi_i= \phi_{,i} = d\phi$ for the function $\phi$ given by 
 \begin{equation}\label{M2}
 \phi:= \tfrac{1}{4(n+1)}\ln \left(\frac{\det  \tilde  g}{\det g}\right). 
 \end{equation}
 
The equation \eqref{M1} allows us to reformulate the condition that the metrics $g$ and $\tilde g$ are c-projectively equivalent as a system of PDE on the components of $\tilde g$ whose coefficients depend on $g$ (and its derivatives). 
Indeed, in view of \eqref{eq:lchpro}, the condition $\tilde{\nabla} \tilde{g}=0$ which is the defining equation for $\tilde \nabla$ 
reads 
\begin{equation}\label{eisenhart}
\nabla_Z \tilde{g}(X,Y) -  \Phi(X) \tilde{g}(Z,Y)-   \Phi(Y) \tilde{g}(Z,X) -  \Phi(JX) \tilde{\omega}(Z,Y) - \Phi(JY) \tilde{\omega}(Z,X)- 2 \Phi(Z) \tilde{g}(X,Y)=0, 
\end{equation}
where we denote by $\tilde{\omega}$ the K\"ahler $2$-form $\tilde{\omega}=\tilde{g}(.,J.)$

We will view this  condition as a system of PDE on the entries of $\tilde g$ whose coefficients depend on the entries of $g$. This system of equations is nonlinear (since  the entries of $\Phi$  depend algebraically on the entries $\tilde g$). A  remarkable observation by Mikes et al  \cite{DomMik1978} is that one  can make this system   linear 
by a clever  substitution. For this,  consider the  symmetric hermitian $(0,2)$-tensor $A$ and the $1$-form $\lambda$ given by  
\begin{eqnarray}
A&=&A(g,\tilde{g})=\left(\frac{\mathrm{det}\,\tilde{g}}{\mathrm{det}\,g}\right)^{\tfrac{1}{2(n+1)}}g\tilde{g}^{-1}g.\label{eq:defA} \\ 
\lambda& =& -\Phi g^{-1} A.  \label{eq:relationphilambda}
\end{eqnarray}
Here $g^{-1}$  is viewed as an isomomorphism $g^{-1}:T^*M\to TM$ given by the condition  $g(g^{-1}\psi, Y):=\psi(Y) $ for all $Y$ 
 and $A$ is viewed as a mapping $A:TM\to T^*M$ given by the condition $AX(Y)= A(X,Y)$. If we view $(0,2)$-tensors as matrices and  $(0,1)$-tensors as $2n$-tuples,
  the matrix of $A$ (up to multiplying it with the scalar expression involving the determinants of the metrics) is the product of the matrices  $g$,  $\tilde g^{-1}$, and again $g$. The 1-form   $\lambda$ in the matrix-notation 
   is minus the product  $\Phi$,  $g^{-1}$ and $A$. 
 Using index notation, 
 $$A_{ij} = \left(\frac{\mathrm{det}\,\tilde{g}}{\mathrm{det}\,g}\right)^{\tfrac{1}{2(n+1)}}  g_{is} \tilde g^{sr} g_{rj} \textrm{ and    $\lambda_i = -\Phi_s g^{sr} A_{r i}$ } ,$$
 where $\tilde g^{ij}$ is the inverse $(2,0)$-tensor to $\tilde g_{ij}$.
 
Straightforward  calculations show that the condition \eqref{eisenhart}  is  equivalent to the formula \eqref{eq:mainA}.  In index notation, the equation \eqref{eq:mainA} reads
  \begin{equation}\label{Mbasic}
  a_{ij,k} = \lambda_i g_{jk}+  \lambda_j g_{ik}+   J^s_{ \ j}\lambda_s J_{ik}+  J^s_{ \ i}\lambda_s J_{jk}. 
  \end{equation}
Note that contracting the equation  \eqref{Mbasic} with $g^{ij}$ we obtain that the one-form  $\lambda$ is the differential of the function   $\tfrac{1}{4}\mathrm{trace}_g(A)$.   In view of this, \eqref{eq:mainA} could be viewed as a linear  system 
of PDE on the entries of $A$ only. 

Note also that the formula \eqref{eq:defA} is invertible: the tensor $A$ given by  \eqref{eq:defA} is nondegenerate, and the metric $\tilde g$ is given in the terms of $g$ and $A$ by 
\begin{align}
\tilde g=\tfrac{1}{\sqrt{\det(A)}} gA^{-1}g.\label{eq:inverse}
\end{align}

\begin{rem}\label{rem:affine}
The metrics $g,\tilde{g}$ are affinely equivalent, if and only if the tensor $A$ in \eqref{eq:defA} is parallel, i.e., if and only of $\lambda$ in \eqref{eq:mainA} is identically zero.
\end{rem}

\begin{rem}\label{rem:metricsol}
Since \eqref{eq:mainA} is linear and the metric $g$ is always a solution of \eqref{eq:mainA}, for every solution $A$ of \eqref{eq:mainA},  the tensor $A+\mathrm{const}\cdot g$ is again a solution. Thus, if $A$ is degenerate, we can choose (at least locally)  the constant such that $A+\mathrm{const}\cdot g$ is a non-degenerate solution and, hence, corresponds to a metric $\tilde{g}$ that is  c-projectively equivalent to $g$. 
\end{rem}

Let us denote by $\mathcal{A}(g,J)$ the linear space of symmetric hermitian solutions $A$ of \eqref{eq:mainA}.  The \emph{degree of mobility $D(g,J)$} of a K\"ahler structure $(g,J)$ is the dimension of $\mathcal{A}(g,J)$.

\begin{rem}\label{rem:isomorphism}
If the metrics $g,\tilde{g}$ are c-projectively equivalent, the spaces $\mathcal{A}(g,J)$ and $\mathcal{A}(\tilde{g},J)$ are isomorphic and hence, $D(g,J)=D(\tilde{g},J)$. This statement is probably expected and evident, the proof  can be found for example in \cite[Lemma 1]{MatRos}. 
\end{rem}

The  results recalled above are rather classical. The next result \cite[Theorem 3]{FKMR} is a  recent one  and 
plays  a  key role  in our paper.

\begin{thm}[\cite{FKMR}]\label{thm:hprosystem}
Let $(M,g,J)$ be a connected K\"ahler manifold of degree of mobility $D(g,J)\geq 3$ and of real dimension $2n\geq 4$. Then,   there exists a unique constant $B$ such that 
for every $A\in \mathcal{A}(g,J)$  with the corresponding $1$-form $\lambda$ there exists  the unique   function $\mu$  such that  $(A,\lambda,\mu)$ satisfies
\begin{align}
\begin{array}{c}
(\nabla_Z) A (X,Y)=g(Z,X)\lambda(Y)+g(Z,Y)\lambda(X)+\omega(Z,X)\lambda(JY)+\omega(Z,Y)\lambda(JX)\vspace{1mm}\\
(\nabla_Z \lambda) (X)=\mu g(Z,X)+BA(Z,X)\vspace{1mm}\\
\nabla_Z \mu =2B\lambda(Z).
\end{array}\label{eq:hprosystem}
\end{align}
for all vector fields $X,Y,Z$. 
\end{thm}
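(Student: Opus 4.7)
The strategy is to prolong the first-order PDE \eqref{eq:mainA} by differentiating twice and extracting the integrability conditions that arise from the Ricci identity. The first equation of \eqref{eq:hprosystem} is the hypothesis itself; the second arises as the prolongation of the first, and the third as the prolongation of the second.

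To obtain the second equation, I would differentiate \eqref{Mbasic} covariantly and antisymmetrize in the two derivative indices. Since $\nabla g = 0$ and $\nabla J = 0$ on a K\"ahler manifold, the right-hand side of the differentiated equation is linear in $\nabla\lambda$, while the left-hand side equals $[\nabla_k,\nabla_l]A_{ij}$, which by the Ricci identity is an algebraic curvature-term in $A$:
$$
g_{jk}\lambda_{i,l}-g_{jl}\lambda_{i,k}+(\text{hermitian analogues})
= -R^{a}{}_{ikl} A_{aj} - R^{a}{}_{jkl} A_{ia}.
$$
Using $\lambda=\tfrac{1}{4} d\,\trace_g A$ (so $\nabla\lambda$ is symmetric), the hermitian symmetry of $A$, and suitable contractions with $g^{jk}$ and with $J$, the curvature side collapses to a linear combination of $g$ and $A$. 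Pointwise this forces a decomposition
$$
(\nabla_Z\lambda)(X) = \mu_A(p)\,g(Z,X) + \nu_A(p)\,A(Z,X)
$$
for some functions $\mu_A,\nu_A$ on $M$ which a priori depend on the chosen solution $A$.

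The most delicate step — and the main obstacle — is to show that $\nu_A=B$ is a universal constant depending only on $(g,J)$ and not on $A$ or on $p$. Here the hypothesis $D(g,J)\ge 3$ enters essentially. Since the map $A\mapsto(\lambda,\nabla\lambda)$ is linear on $\mathcal A(g,J)$ and since $A=g$ corresponds to $\lambda\equiv 0$, we may pick two solutions $A_1,A_2\in \mathcal A(g,J)$ linearly independent modulo $\mathbb{R}\cdot g$. Applying the decomposition to a generic linear combination $A=c_1A_1+c_2A_2$ and comparing with $c_1(\mu_{A_1}g+\nu_{A_1}A_1)+c_2(\mu_{A_2}g+\nu_{A_2}A_2)$, pointwise linear algebra — using that $A_1,A_2,g$ are generically pointwise-independent as hermitian symmetric tensors — forces $\nu_{A_1}=\nu_{A_2}$, so $\nu_A$ reduces to a single function $B(p)$ on $M$ independent of $A$. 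Differentiating the identity $\nabla\lambda=\mu g+BA$ once more, antisymmetrizing, and applying the Ricci identity yields $\nabla B=0$, so $B$ is a global constant, and its uniqueness is immediate from the decomposition.

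Finally, differentiating the second equation once more and substituting the expression for $\nabla A$ from the first, the symmetric part of the resulting identity yields exactly $\nabla \mu=2B\lambda$; uniqueness of $\mu$ for each $A$ is automatic since $g$ is non-degenerate, so once the form $\nabla\lambda = \mu g + BA$ is established, $\mu$ is determined algebraically. The hard work is concentrated in the middle paragraph: the contraction computation that cleanly separates the curvature contribution into $g$- and $A$-parts, and the linearity argument that forces $\nu_A$ to be independent of the particular solution $A$. Both rely on the special K\"ahler curvature identities and on having $D(g,J)\ge 3$, but once they are in place the remaining prolongation is routine.
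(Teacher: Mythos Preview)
The paper does not prove this theorem; it is quoted verbatim from \cite[Theorem~3]{FKMR} and used as a black box. So there is no in-paper proof to compare against. Nonetheless, your outline has a genuine gap worth flagging.

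The problematic step is the sentence ``the curvature side collapses to a linear combination of $g$ and $A$''. This is not true for a single solution $A$. After antisymmetrizing and contracting, the right-hand side of the Ricci identity produces terms such as $\mathrm{Ric}^{a}{}_{l}A_{aj}$ and further curvature contractions that depend on the full curvature tensor of $g$, not merely on $g$ and $A$. If this step worked as stated, the second equation of \eqref{eq:hprosystem} would hold whenever $D(g,J)\geq 2$; but it does not --- there are K\"ahler metrics with $D(g,J)=2$ for which no function $\mu$ and constant $B$ make the second line hold (this is precisely why the Einstein hypothesis is needed in Lemma~\ref{lem:einsteincproeinstein2} when $D=2$).

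The hypothesis $D(g,J)\geq 3$ is therefore needed not only in your ``most delicate step'' (independence of $\nu_A$ from $A$) but already to obtain the ansatz $\nabla\lambda=\mu g+\nu A$ at all. In the argument of \cite{FKMR} the integrability conditions are written for a \emph{pair} of solutions $A,\bar A$ simultaneously; the resulting algebraic identities couple $\nabla\lambda$, $\nabla\bar\lambda$, $A$, $\bar A$ and curvature, and only when one has a third solution available (so that at a generic point one can choose $A,\bar A$ in suitable relative position) do these identities force the curvature contributions to organise into the form $\mu g+BA$. Your linearity argument in the middle paragraph is in the right spirit, but it has to be deployed earlier, at the stage of deriving the shape of $\nabla\lambda$, not afterwards. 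Once the second equation is correctly established, your derivation of the third equation and of the constancy of $B$ is fine (and is indeed noted in \cite[Remark~5]{FKMR}).
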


\section{Conification construction and solutions of \eqref{eq:hprosystem} as  parallel tensors on the conification}
\label{sec:construction}

Let $(M,g,J)$ be a K\"ahler manifold of arbitrary signature with K\"ahler $2$-form $\omega=g(.,J.)$. We explicitly allow $\dim M = 2n= 2$ here.  We also  suppose that  the form $\omega$ is an exact form, $\omega=d\tau$. This is always true if  $H^2(M,\mathbb{R})=0$ and in particular, it is always true if the manifold is diffeomorphic to the ball. This is sufficient for our purposes since we will apply the conification construction only to subsets of such kind.  
 
We consider the manifold $P=\mathbb{R}\times M,$ where $t$ will denote the standard coordinate on $\mathbb{R}$ and the natural projection to $M$ is denoted by $\pi:P \to M$. 
  
On $P$, we define the $1$-form 
$$\theta=dt-2\tau,$$
where for readability we omit the  symbol for the pullback of $\tau$ to $P$ (actually, the formula above should be $\theta= dt - 2 \pi^*\tau$). We will also omit the symbols of the $\pi$-pullback in all formulas below so if in some formula we sum or compare a $(0,k)$-tensor defined on $M$ with  a $(0,k)$-tensor  defined on $P$,  the tensor on $M$ should be pulled back to $P$ by $\pi$. 

Clearly,  $d\theta=-2d\tau=-2\omega$. Let us define the metric $h$ on $P$  by
$$h=\theta^2+g \  \ (\textrm{where $\theta^2 =\theta\otimes\theta$}).$$ 
\begin{rem}\label{rem:changeoftau}
The freedom in the choice of $\tau$ is not essential for us since (as it is  straightforward to check)  the change $\tau\longmapsto \tilde{\tau}=\tau+df$ yields a metric $\tilde{h}$ that is isometric to $h$: the mapping  $\phi(t,p)=(t-2f(p),p)$ satisfies $\phi^*\theta=\tilde{\theta}$ and hence, $\phi^* h=\tilde{h}$.  
\end{rem}
Further, let us denote by $\mathcal{H}=\mathrm{kern}(\theta)$ the ``horizontal'' distribution of $P$ defined by $\theta$. 
For any vector field $X$ on $M$ we define its \emph{horizontal lift} $X^\theta$ on $P$ by the properties 
\begin{align}
   \pi_{*}(X^\theta)= X \textrm{ and } \theta(X^\theta)=0.\label{eq:horizontallift}
\end{align}

We consider now  the \emph{cone} over $(P,h)$, that is  we consider the $2(n+1)$-dimensional  manifold $(\hat{M}=\mathbb{R}_{>0}\times P,\hat{g}=dr^2+r^2h)$. Let us denote by $\hat \pi:\hat M\to M $ the natural projection $(r,t,p)\mapsto p \in M$. The kernel of the differential of this projection is spanned by  
  $\xi=r\partial_r$, which will be called the \emph{cone vector field}, and $\eta=\partial_t$. In the literature on Sasaki manifolds, $\eta$ is sometimes refered to as Reeb vector field. 
  
Next, we  introduce an  almost complex structure $\hat{J}$  on $\hat M$ (which later appears to be a complex structure) by the formula  
$$\hat{J}\xi=\eta, \hat{J}\eta=-\xi,\   \mbox{ and } \hat{J}X^{\theta}=(JX)^{\theta}. $$

We will call the $2(n+1)$-dimensional manifold $(\hat{M},\hat{g},\hat{J})$  the \emph{conification} of $(M,g,J)$.

We took the name ``conification'' from the recent paper \cite{ACM} of Alekseevsky et al where this construction has been obtained in a more general situation. The relation between the two constructions is explained in \cite[Example 1]{ACM}.       

We have
\begin{thm}[essentially, \cite{ACM}]\label{thm:coneconstruction1}
Let $(M,g,J)$ be a K\"ahler manifold of real dimension $2n\geq 2$ and suppose that the K\"ahler $2$-form $\omega=g(.,J.)$ satisfies $\omega=d\tau$ for a certain $1$-from $\tau$. Then the conification $(\hat{M},\hat{g},\hat{J})$ is a K\"ahler manifold. 

Conversely, suppose $(\hat{M},\hat{g},\hat{J})$ is a K\"ahler manifold which locally, in a neighborhood of every point of a dense open subset, is a cone, i.e. $(\hat{M},\hat{g})$ is of the form $(\hat{M}=\mathbb{R}_{>0}\times P,\hat{g}=dr^2+r^2h)$ for a certain pseudo-Riemannian manifold $(P,h)$. Then, $(\hat{M},\hat{g},\hat{J})$ arises locally as the conification of its K\"ahler quotient $(M,g,J)$.
\end{thm}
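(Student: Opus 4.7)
For the forward direction, the plan is to verify the three K\"ahler conditions in sequence. The algebraic identities $\hat J^2=-\Id$ and $\hat J$-Hermiticity of $\hat g$ are routine from the decomposition $T\hat M=\spann\{\xi,\eta\}\oplus\mathcal{H}^\theta$ together with $\hat g(\xi,\xi)=\hat g(\eta,\eta)=r^2$, $\hat g(\xi,\eta)=0$, and $\hat g(X^\theta,Y^\theta)=r^2 g(X,Y)$. To show that the fundamental $2$-form $\hat\omega=\hat g(\cdot,\hat J\cdot)$ is closed, the plan is to compute the metric dual $\eta^\flat = r^2\theta$, expand $d(r^2\theta) = 2r\,dr\wedge\theta - 2r^2\omega$ using $d\theta = -2\omega$, and verify on the three types of argument pairs that this equals $-2\hat\omega$. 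Then $\hat\omega = -\tfrac{1}{2}d(\eta^\flat)$ is manifestly exact, hence closed.

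The remaining step is integrability of $\hat J$. The plan is to compute the Nijenhuis tensor $N_{\hat J}$ on all pairs drawn from $\{\xi,\eta,X^\theta,Y^\theta\}$, using the explicit expression $X^\theta = X + 2\tau(X)\partial_t$. The relations $[\xi,\eta]=[\xi,X^\theta]=[\eta,X^\theta]=0$ are immediate because $X^\theta$ is independent of $r$ and $t$, and $[X^\theta,Y^\theta]=[X,Y]^\theta + 2\omega(X,Y)\eta$ follows from Cartan's formula applied to $\theta$ together with $d\theta=-2\omega$. All mixed entries of $N_{\hat J}$ vanish automatically from these brackets, and the horizontal-horizontal case, after cancelling the vertical contributions via $\omega(JX,Y)+\omega(X,JY)=0$, collapses to the horizontal lift of $N_J(X,Y)$, which vanishes because $J$ is integrable on $M$.

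For the converse, the plan is as follows. Given a K\"ahler cone $(\hat M,\hat g=dr^2+r^2h,\hat J)$, set $\xi=r\partial_r$ and $\eta=\hat J\xi$. The cone identity $\hat\nabla\xi=\Id$ (which is equivalent to the warped-product form of $\hat g$) together with $\hat\nabla\hat J=0$ forces $[\xi,\eta]=0$ and implies that $\eta$ is holomorphic and Killing; hence $\spann\{\xi,\eta\}$ is an integrable, $\hat J$-invariant distribution. On the dense open set where the cone structure is defined, a local codimension-$2$ transversal to its orbits inherits a K\"ahler structure $(M,g,J)$ from $(\hat g,\hat J)$, and the $1$-form $\theta := \eta^\flat/r^2$ restricts to the level $\{r=1\}$ as a connection form satisfying $d\theta = -2\omega$. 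In a local trivialization of the $\eta$-action one can write $\theta = dt - 2\tau$ with $d\tau=\omega$, and feeding $(M,g,J,\tau)$ back into the construction of Section~\ref{sec:construction} recovers $(\hat M,\hat g,\hat J)$.

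The main obstacle is the converse direction: one has to verify that the objects on the transversal form a genuine K\"ahler structure, i.e.\ that the induced metric is non-degenerate (which follows from non-degeneracy of $\hat g$ on $\spann\{\xi,\eta\}$, since $\hat g(\xi,\xi)=\hat g(\eta,\eta)=r^2\neq 0$), that the induced almost complex structure is integrable (inherited from $\hat J$), and that the Sasakian connection $1$-form on the level $\{r=1\}$ matches the form $dt-2\tau$ of the construction. Each of these is a consequence of the K\"ahler cone hypothesis but requires careful book-keeping of the decomposition and the $\eta$-action; by contrast, once the bracket identities are in place, the forward verification is essentially mechanical.
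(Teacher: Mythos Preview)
Your forward direction is correct but takes a different route from the paper. The paper computes the Levi-Civita connection $\hat\nabla$ explicitly (first on $(P,h)$ via the Koszul formula, then on the cone) and verifies $\hat\nabla\hat J=0$ directly on the frame $\{\xi,\eta,X^\theta\}$. You instead use the equivalent characterisation of K\"ahler as ``Hermitian $+$ $d\hat\omega=0$ $+$ $N_{\hat J}=0$'': your observation that $\hat\omega=-\tfrac12\,d(\eta^\flat)=-\tfrac12\,d(r^2\theta)$ gives closedness in one line, and the Nijenhuis computation you outline goes through exactly as you say (the vertical contributions cancel via $\omega(JX,JY)=\omega(X,Y)$ and $\omega(JX,Y)+\omega(X,JY)=0$). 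Your approach is slightly more economical for this theorem alone, but the paper's explicit formulas for $\hat\nabla$ (equation \eqref{eq:LCconncone}) are indispensable for the subsequent Theorem~\ref{thm:coneconstruction2}, where parallelism of $\hat A$ must be checked componentwise; so the paper's route amortises the computation.

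For the converse, your plan coincides with the paper's Lemma~\ref{lem:coneconstruction1b}: both set $\eta=\hat J\xi$, observe it is Killing and tangent to the level sets $\{r=\mathrm{const}\}$, take the local quotient by its flow, and recover $(g,J)$ and $\theta$ on the quotient. One point where the paper is more concrete: it defines the induced almost complex structure on $M$ via $J'={}^h\nabla\eta$ restricted to $\mathcal H=\ker\theta$, then checks $J'X^\theta=\hat J X^\theta$, rather than appealing directly to an ``inherited'' structure on a transversal. This makes the verification that $(JX)^\theta=\hat J X^\theta$ and the reduction of $N_J$ to $N_{\hat J}$ cleaner. Your outline is correct in substance; the paper's execution fills in precisely the book-keeping you flag as the main obstacle.
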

The K\"ahler quotient is taken with respect to the action of the hamiltonian Killing vector field $\eta=\hat{J}\xi$, where $\xi=r\partial_r$ is the cone vector field. This will be explained in more detail in the proof of Theorem \ref{thm:coneconstruction1}. 

We could also assume the less restrictive condition that the K\"ahler class $[\omega]\in H^{2}(M,\mathbb{R})$ is integer, and hence, $\omega$, up to scale, is the curvature of a certain connection one-form $\theta$ on some $S^1$-bundle $P$ over $M$ as in \cite{ACM}. Then, the metrics $h$ on $P$ and $\hat{g}$ on the cone $\hat{M}$ over $P$ can be defined in the same way as above and the proof of Theorem \ref{thm:coneconstruction1} will be literally the same as the proof that we will give below. In this more general situation, the role of $\eta=\partial_t$ is then played by the fundamental vector field $\eta$ of the $S^1$-action on $P$ that satisfies $\theta(\eta)=1$.

Actually from the construction it is immediately clear that $\hat J^2 = -\Id$. It is also straight-forward to check that $\hat g$ is hermitian w.r.t. $\hat J$ by checking the condition $\hat{g}(\hat{J}u,\hat{J}v)=\hat{g}(u,v)$ for all possible combinations of tangent vectors $u,v$ of the form $\xi,\eta$ and $X^\theta$. What remains is to show that $\hat J$ is parallel with respect to the Levi-Civita connection of $\hat g$. This will be done in Section \ref{sec:proof of coneconstruction1}. 

Let us explain how the system \eqref{eq:hprosystem} on $M$ relates to the parallel $(0,2)$-tensors on $\hat{M}$.
\begin{thm}\label{thm:coneconstruction2}
Let $(M,g,J)$ be a K\"ahler manifold of real dimension $2n\geq 4$,  suppose that the K\"ahler $2$-form $\omega=g(.,J.)$ satisfies $\omega=d\tau$ for a certain $1$-from $\tau$.  Then, there exists an isomorphism between the space of solutions $(A,\lambda,\mu)$ of \eqref{eq:hprosystem} with $B=-1$ and the space of parallel symmetric hermitian $(0,2)$-tensors $\hat{A}$ on $(\hat{M},\hat{g},\hat{J})$.  The isomorphism is explicit and  is given by
  \begin{align}
(A,\lambda,\mu)\leftrightarrow \hat{A}=\mu dr^2-r dr\odot\lambda+r^2(\mu\theta^2+\theta\odot\lambda(J.)+A),\label{eq:Acccone}
\end{align}
where we omit the symbol for the $\hat \pi$-pullback of $\mu,\lambda,\lambda(J.)$ and $A$ to $\hat{M}$. In the formula \eqref{eq:Acccone}, $X\odot Y=X\otimes Y+Y\otimes X$ is the symmetric tensor product. 
\end{thm}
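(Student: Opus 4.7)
The strategy is to work in the adapted frame of vector fields $\{\xi, \eta, X^\theta\}$ on $\hat{M}$ and to match conditions component by component. I will begin by showing that the formula \eqref{eq:Acccone} is the unique hermitian symmetric tensor whose ``pure'' components in this frame are $\hat{A}(\xi, \xi)/r^{2} = \mu$, $\hat{A}(\xi, X^{\theta})/r^{2} = -\lambda(X)$, and $\hat{A}(X^{\theta}, Y^{\theta})/r^{2} = A(X, Y)$. Indeed, hermitian symmetry together with the identities $\hat{J}\xi = \eta$, $\hat{J}X^{\theta} = (JX)^{\theta}$ forces $\hat{A}(\xi, \eta) = 0$, $\hat{A}(\eta, \eta) = \hat{A}(\xi, \xi) = r^{2}\mu$, and $\hat{A}(\eta, X^{\theta}) = -\hat{A}(\xi, (JX)^{\theta}) = r^{2}\lambda(JX)$, matching \eqref{eq:Acccone}. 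In particular $\hat{A}$ is manifestly hermitian and symmetric.

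The next ingredient is an explicit description of the Levi-Civita connection $\hat{\nabla}$ of $\hat g$ in this frame. I will compute it in two steps. On $(P, h = \theta^{2} + g)$, the identity $d\theta = -2\omega$ implies the bracket relation $[X^{\theta}, Y^{\theta}] = [X, Y]^{\theta} + 2\omega(X, Y)\eta$, which together with $[\eta, X^{\theta}] = 0$ and Koszul's formula yields $\nabla^{h}_{\eta}\eta = 0$, $\nabla^{h}_{\eta}X^{\theta} = \nabla^{h}_{X^{\theta}}\eta = (JX)^{\theta}$, and $\nabla^{h}_{X^{\theta}}Y^{\theta} = (\nabla_{X}Y)^{\theta} + \omega(X, Y)\eta$. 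The standard cone formulas $\hat{\nabla}_{\xi}W = W$ for $W$ pulled back from $P$ and $\hat{\nabla}_{U}V = \nabla^{h}_{U}V - h(U, V)\xi$ for $U, V$ tangent to $P$ then give $\hat{\nabla}_{\eta}\eta = -\xi$, $\hat{\nabla}_{\eta}X^{\theta} = \hat{\nabla}_{X^{\theta}}\eta = (JX)^{\theta}$, and $\hat{\nabla}_{X^{\theta}}Y^{\theta} = (\nabla_{X}Y)^{\theta} + \omega(X, Y)\eta - g(X, Y)\xi$. A direct check in the frame shows $\hat{\nabla}\hat{J} = 0$, which also supplies the Kählerness of $\hat{M}$ left open in Theorem \ref{thm:coneconstruction1}.

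With $\hat{\nabla}$ in hand, I will compute $(\hat{\nabla}_{U}\hat{A})(V, W)$ for $U, V, W$ running through the frame. The cases $U = \xi$ vanish identically because every component of $\hat{A}$ is homogeneous of degree two in $r$, and the cases $U = \eta$ vanish thanks to the hermitian identities $A(JX, Y) + A(X, JY) = 0$ and $\lambda \circ J^{2} = -\lambda$. The three substantial cases, all with $U = X^{\theta}$, reproduce precisely the three equations of \eqref{eq:hprosystem} with $B = -1$: the choice $(V, W) = (\xi, \xi)$ (equivalently $(\eta, \eta)$) gives $d\mu = -2\lambda$; the choice $(V, W) = (\xi, Y^{\theta})$ gives $(\nabla_{X}\lambda)(Y) = \mu\, g(X, Y) - A(X, Y)$; and the choice $(V, W) = (Y^{\theta}, Z^{\theta})$ reproduces \eqref{eq:mainA}, where the extra $\omega(X, Y)\eta - g(X, Y)\xi$ terms in $\hat{\nabla}_{X^{\theta}}Y^{\theta}$ are exactly what supplies the right-hand side of \eqref{eq:mainA}. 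For the converse, starting from a parallel hermitian symmetric $\hat{A}$, the identities $(\hat{\nabla}_{\xi}\hat{A})(V, W) = 0$ force each component to scale as $r^{2}$, while $(\hat{\nabla}_{\eta}\hat{A})(V, W) = 0$ together with hermitian symmetry force these coefficients to be pulled back from $M$; thus $\hat{A}$ has the form \eqref{eq:Acccone} for a unique triple $(A, \lambda, \mu)$, which solves \eqref{eq:hprosystem} with $B = -1$ by the forward computation. The main technical burden is the systematic bookkeeping of all the frame components of $\hat{\nabla}\hat{A}$, but each individual case is a short calculation using the formulas above.
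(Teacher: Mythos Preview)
Your proposal is correct and the underlying computations are the same as in the paper, but the organization differs in a way worth pointing out. The paper factors the correspondence through the intermediate manifold $P$: it first invokes the known isomorphism (from \cite{FedMat,Matveev2010,Mounoud2010}) between parallel symmetric $(0,2)$-tensors on a cone $\hat M=\mathbb{R}_{>0}\times P$ and solutions $(L,\sigma,\rho)$ of the system \eqref{eq:system} on $P$, then shows (Lemma~\ref{lem:dtinvariance}) that hermiticity of $\hat A$ is equivalent to $\eta$-invariance of $(L,\sigma,\rho)$, and finally (Lemma~\ref{lem:isom}) that $\eta$-invariant solutions of \eqref{eq:system} on $P$ correspond to solutions of \eqref{eq:hprosystem} with $B=-1$ on $M$. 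You instead compute $\hat\nabla\hat A$ directly on $\hat M$ in the frame $\{\xi,\eta,X^\theta\}$, absorbing both steps into one. The paper's route is more modular and makes the role of the $\eta$-invariance transparent; your direct route is shorter and avoids introducing the auxiliary system \eqref{eq:system}, at the cost of slightly more bookkeeping in the converse direction (where your use of $\hat\nabla_\eta\hat A=0$ together with hermiticity to get $\eta$-invariance of the components is exactly the content of the paper's Lemma~\ref{lem:dtinvariance}, compressed).
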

\begin{rem}
If we use $\tilde{\tau}=\tau+df$ instead of $\tau$ to construct the conification, the diffeomorphism $\phi:\hat{M}\rightarrow \hat{M}$ given by $\phi(r,t,p)=(r,t-2f(p),p)$ (compare also Remark \ref{rem:changeoftau}) satisfies $\phi^* \theta=\tilde{\theta}$. Thus, $\phi$ sends $\hat{g}$, the K\"ahler $2$-form $\hat{\omega}=\hat{g}(.,\hat{J}.)=r\theta\wedge dr+r^2\omega$ and $\hat{A}$ given by \eqref{eq:Acccone} to the corresponding objects constructed by using $\tilde{\tau}$.
\end{rem}

The proof of theorems \ref{thm:coneconstruction1} and \ref{thm:coneconstruction2} is by direct calculations and will be done in  Sections \ref{sec:proof of coneconstruction1} and \ref{sec:isom}. 
The proof of Theorem  \ref{thm:coneconstruction1} is contained in  \cite{ACM} and will be given for self-containedness and because we will need all the formulas from the proof later on.

\subsection{Proof of Theorem \ref{thm:coneconstruction1}.} 
\label{sec:proof of coneconstruction1}

Recall that for a vector field $X$ on $M$, the natural lift of $X$ to the horizontal distribution $\mathcal{H}=\mathrm{kern}(\theta)\subseteq TP$ will be denoted by $X^{\theta}$, see equation \eqref{eq:horizontallift}. 
\begin{lem}
Let $X,Y$ denote vector fields on $M$. The Levi-Civita connection $\hnabla$ of the metric $h$ on $P$ is given by the fomulas
\begin{align}
\hnabla_{\eta}\eta&=0,\,\,\,\hnabla_{\eta}X^{\theta}=\hnabla_{X^{\theta}}\eta=(JX)^{\theta},\,\,\,\hnabla_{X^{\theta}}Y^{\theta}=(\nabla_X Y)^{\theta}+\omega(X,Y)\eta
\label{eq:connsasaki}
\end{align}
\end{lem}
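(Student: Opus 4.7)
The plan is to compute $\hnabla$ directly from the Koszul formula, using a short preliminary calculation of the Lie brackets among $\eta$ and the horizontal lifts $X^{\theta}$.

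First I would record the basic inner products: $h(\eta,\eta)=1$, $h(\eta,X^\theta)=0$, and $h(X^\theta,Y^\theta)=g(X,Y)$. The first two are constant on $P$, and $h(X^\theta,Y^\theta)=g(X,Y)\circ\pi$ depends only on the $M$-coordinates. Next I would compute the brackets. Since $X^\theta$ is $\pi$-projectable onto $X$ and its components are independent of $t$, we have $[\eta,X^\theta]=0$. For two horizontal lifts, $\pi_*[X^\theta,Y^\theta]=[X,Y]$, so $[X^\theta,Y^\theta]$ differs from $[X,Y]^\theta$ only by a multiple of $\eta$; the coefficient is read off from
\begin{equation*}
\theta([X^\theta,Y^\theta])=-d\theta(X^\theta,Y^\theta)=2\omega(X,Y),
\end{equation*}
so $[X^\theta,Y^\theta]=[X,Y]^\theta+2\omega(X,Y)\,\eta$. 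This is the one nontrivial ingredient and, conceptually, is where the $\omega$-term in the last formula comes from.

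Now I would feed these into the Koszul formula
\begin{equation*}
2h(\hnabla_U V,W)=U h(V,W)+V h(U,W)-W h(U,V)+h([U,V],W)-h([V,W],U)-h([U,W],V),
\end{equation*}
testing $W=\eta$ and $W=Z^\theta$ in turn. For $\hnabla_\eta\eta$ all six terms vanish immediately (constant inner products and $[\eta,\cdot\,]$ kills $\eta$ and $X^\theta$), giving $\hnabla_\eta\eta=0$. For $\hnabla_\eta X^\theta$ the $W=\eta$ test yields $0$ (so the result is horizontal), while the $W=Y^\theta$ test reduces, via $[\eta,X^\theta]=0$ and $[X^\theta,Y^\theta]=[X,Y]^\theta+2\omega(X,Y)\eta$, to $2h(\hnabla_\eta X^\theta,Y^\theta)=-2\omega(X,Y)=2g(JX,Y)$, identifying the horizontal part as $(JX)^\theta$. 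By symmetry of the connection and $[\eta,X^\theta]=0$, $\hnabla_{X^\theta}\eta=\hnabla_\eta X^\theta=(JX)^\theta$.

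For $\hnabla_{X^\theta}Y^\theta$, the $W=\eta$ test gives $2h(\hnabla_{X^\theta}Y^\theta,\eta)=h([X^\theta,Y^\theta],\eta)=2\omega(X,Y)$, so the vertical component is $\omega(X,Y)\eta$. The $W=Z^\theta$ test collapses (the $\eta$-parts of the brackets drop out against horizontal $Z^\theta$) to the Koszul formula on $(M,g)$ and yields $h(\hnabla_{X^\theta}Y^\theta,Z^\theta)=g(\nabla_X Y,Z)$, so the horizontal component is $(\nabla_X Y)^\theta$. Combining the vertical and horizontal pieces gives $\hnabla_{X^\theta}Y^\theta=(\nabla_X Y)^\theta+\omega(X,Y)\,\eta$, completing the proof. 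No real obstacle is expected; the only point requiring care is the bracket identity, and after that the Koszul formula is a mechanical verification.
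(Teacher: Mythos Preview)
Your proposal is correct and is essentially the same approach as the paper's own proof: both compute $\hnabla$ via the Koszul formula, testing against $\eta$ and $Z^{\theta}$, and the key input in both is the identity $\theta([X^{\theta},Y^{\theta}])=2\omega(X,Y)$. Your version is marginally tidier in that you isolate the bracket identities $[\eta,X^{\theta}]=0$ and $[X^{\theta},Y^{\theta}]=[X,Y]^{\theta}+2\omega(X,Y)\eta$ beforehand and deduce $\hnabla_{X^{\theta}}\eta=\hnabla_{\eta}X^{\theta}$ from torsion-freeness rather than by a separate Koszul computation, but there is no substantive difference.
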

\begin{proof}
Let $u,v,w$ be vector fields on $P$. Using the Koszul formula
$$2h(\hnabla_{u}v,w)=uh(v,w)+vh(w,u)-wh(u,v)-h(u,[v,w])+h(v,[w,u])+h(w,[u,v])$$
we calculate
$$2h(\hnabla_{\eta}\eta,\eta)=0\mbox{ and }2h(\hnabla_{\eta}\eta,X^{\theta})=0,$$
thus, $\hnabla_{\eta}\eta=0$ as we claimed. Now we calculate $2h(\hnabla_{\eta}X^{\theta},\eta)=0$ and 
$$2h(\hnabla_{\eta}X^{\theta},Y^{\theta})=-h(\eta,[X^{\theta},Y^{\theta}])=-\theta([X^{\theta},Y^{\theta}])=d\theta(X^{\theta},Y^{\theta})=-2\omega(X,Y).$$
This shows that $\hnabla_{\eta}X^{\theta}=(JX)^{\theta}$. 

To verify the last equation in \eqref{eq:connsasaki}, we calculate
$$2h(\hnabla_{X^{\theta}}Y^{\theta},\eta)=h(\eta,[X^{\theta},Y^{\theta}])=2\omega(X,Y)\mbox{ and }2h(\hnabla_{X^{\theta}}Y^{\theta},Z^{\theta})=2g(\nabla_X Y,Z).$$
Combining these two equations gives us the third equation in \eqref{eq:connsasaki}.
\end{proof}

The formulas for the Levi-Civita connection $\hat{\nabla}$ of the Riemannian cone $(\hat{M}=\mathbb{R}_{>0}\times P,\hat{g}=dr^2+r^2 h)$ over a pseudo-Riemannian manifold $(P,h)$ are given by
\begin{align}
\hat{\nabla}\xi=\Id,\,\,\,\hat{\nabla}_{X}Y=\nabla_{X}Y-h(X,Y)\xi,\label{eq:LCcone}
\end{align}
where $\xi=r\partial_r$ and $X,Y$ are vector fields on $P$. This is well-known, see for example \cite[Fact 3.2]{Mounoud2010}. For $(P=\mathbb{R}\times M,h=\theta^2+g)$ as above we can combine these formulas with \eqref{eq:connsasaki} to obtain
\begin{align}
\begin{array}{c}
\hat{\nabla}\xi=\Id,\,\,\,\hat{\nabla}\eta=\hat{J},\,\,\,\hat{\nabla}_{\xi}X^\theta=X^\theta,\,\,\,\hat{\nabla}_{\eta}X^\theta=\hat{J}X^\theta,\vspace{1mm}\\
\hat{\nabla}_{X^\theta}Y^\theta=(\nabla_{X}Y)^\theta+\omega(X,Y)\eta-g(X,Y)\xi.
\end{array}\label{eq:LCconncone}
\end{align}
Using these equations, it is easy to check that $\hat{\nabla}\hat{J}=0$. A straight-forward way to do it, is to show that the equation $\hat{\nabla}_u (\hat{J}v)=\hat{J}\hat{\nabla}_u v$ is fulfilled for $u,v$ of the form $\xi,\eta$ and $X^\theta$.

Since $\hat g$ is evidently symmetric, nondegenerate  and hermitian with respect to $\hat J$, the conification $(\hat{M},\hat{g},\hat{J})$ is a K\"ahler manifold as we claimed. 
This completes the proof of  the first statement    of Theorem \ref{thm:coneconstruction1}. 

The other direction of Theorem \ref{thm:coneconstruction1} immediately follows from 
\begin{lem}\label{lem:coneconstruction1b}
Suppose $(\hat{M}^{2n+2},\hat{g},\hat{J})$ is a K\"ahler manifold which is locally, in a neighborhood of every point of a dense open subset, a cone, i.e. $(\hat{M},\hat{g})$ is of the form $(\hat{M}=\mathbb{R}_{>0}\times P,\hat{g}=dr^2+r^2h)$ for a certain $(2n+1)$-dimensional pseudo-Riemannian manifold $(P,h)$. Then, $(\hat{M},\hat{g},\hat{J})$ is  locally the conification of its K\"ahler quotient $(M^{2n},g,J)$, where the quotient is taken w.r.t. the action of the hamiltonian Killing vector field $r\hat{J}\partial_r$.
\end{lem}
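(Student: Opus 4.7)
The plan is to recover $(M,g,J)$ as a local K\"ahler quotient of $\hat{M}$ by the flow of the vector field $\eta:=\hat{J}\xi$, and then to produce explicit coordinates $(r,t,\text{point of }M)$ on $\hat{M}$ that identify a neighborhood of $\hat{M}$ with the conification of this quotient.

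First I would check that $\eta$ is a Hamiltonian Killing field. Combining $\hat{\nabla}\xi=\Id$ (which holds on any cone, see \eqref{eq:LCcone}) with $\hat{\nabla}\hat{J}=0$ yields $\hat{\nabla}_X\eta=\hat{J}X$ for every $X$; since $\hat{J}$ is $\hat{g}$-skew, $\eta$ is Killing. Setting $\beta:=\hat{g}(\eta,\cdot)$, the same identity gives
\[
d\beta(X,Y)=\hat{g}(\hat{\nabla}_X\eta,Y)-\hat{g}(\hat{\nabla}_Y\eta,X)=2\hat{g}(\hat{J}X,Y)=-2\hat{\omega}(X,Y),
\]
so $d\beta=-2\hat{\omega}$ on $\hat{M}$. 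Contracting with $\eta$ and using $\iota_\eta\beta=\hat{g}(\eta,\eta)=\hat{g}(\xi,\xi)=r^2$ together with $\mathcal{L}_\eta\beta=0$, we obtain $\iota_\eta\hat{\omega}=d(r^2/2)$, so $\eta$ is Hamiltonian with moment map $r^2/2$. Hermiticity of $\hat{g}$ gives $\hat{g}(\xi,\eta)=0$, so $\eta$ is tangent to each level set of $r$.

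Next I would perform the symplectic reduction on the slice $P_0:=\{r=1\}$. The flow of $\eta$ preserves $P_0$ and acts locally freely, so after shrinking we have a smooth local quotient $M:=P_0/\langle\eta\rangle$ with submersion $\pi:P_0\to M$. Let $\mathcal{H}\subset TP_0$ denote the $\hat{g}$-orthogonal complement of $\eta$; for $v\in\mathcal{H}$ one checks $\hat{g}(\hat{J}v,\xi)=-\hat{g}(v,\eta)=0$ and $\hat{g}(\hat{J}v,\eta)=\hat{g}(v,\xi)=0$, so $\hat{J}$ preserves $\mathcal{H}$. The restrictions $\hat{g}|_{\mathcal{H}}$ and $\hat{J}|_{\mathcal{H}}$ are $\eta$-invariant (since $\mathcal{L}_\eta\hat{g}=0$, and $\mathcal{L}_\eta\hat{\omega}=0$ forces $\mathcal{L}_\eta\hat{J}=0$) and therefore descend to a K\"ahler structure $(g,J)$ on $M$ with K\"ahler form $\omega=g(\cdot,J\cdot)$. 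Put $\theta:=\beta|_{TP_0}$; this is a $1$-form with $\theta(\eta)=1$ and $\ker\theta=\mathcal{H}$, and the definition of $\mathcal{H}$ together with $\hat{g}(\eta,\eta)=1$ on $P_0$ yields $\hat{g}|_{TP_0\otimes TP_0}=\theta^2+\pi^*g$. Restricting $d\beta=-2\hat{\omega}$ to $TP_0\wedge TP_0$ and using that $\hat{\omega}$ vanishes on $\eta\wedge\mathcal{H}$ while agreeing with $\pi^*\omega$ on $\mathcal{H}\wedge\mathcal{H}$, we obtain $d\theta=-2\pi^*\omega$.

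Finally I would produce the $t$-coordinate. By the Poincar\'e lemma, shrink $M$ so that $\omega=d\tau$; then $\theta+2\pi^*\tau$ is a closed $1$-form on $P_0$ with value $1$ on $\eta$, hence locally equal to $dt$ for some function $t$. The map $q\mapsto(t(q),\pi(q))$ identifies a neighborhood of $P_0$ with $\mathbb{R}\times M$ so that $\eta=\partial_t$ and $\theta=dt-2\pi^*\tau$; together with the radial coordinate this presents a neighborhood of $\hat{M}$ as $\mathbb{R}_{>0}\times\mathbb{R}\times M$ with $\hat{g}=dr^2+r^2(\theta^2+\pi^*g)$, while the characterizing relations of $\hat{J}$ become $\hat{J}\xi=\eta=\partial_t$ together with the prescribed action on $\pi$-horizontal lifts. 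This is exactly the conification of $(M,g,J)$ from Section \ref{sec:construction}. The main obstacle is the simultaneous bookkeeping of the three structures $(\hat{g},\hat{J},\theta)$ under this identification: the K\"ahler-reduction and Poincar\'e-lemma steps are standard, but one must be careful with the sign and scale conventions in $d\beta=-2\hat{\omega}$ so that the factor $-2$ exactly matches the one in the definition $\theta=dt-2\tau$ of the conification.
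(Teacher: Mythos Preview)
Your argument is correct and follows essentially the same route as the paper's proof: introduce $\eta=\hat{J}\xi$, show it is Killing and tangent to the slices $\{r=\text{const}\}$, take the local quotient, define $\theta=\hat{g}(\eta,\cdot)$ on the slice, verify $d\theta=-2\pi^*\omega$, and identify the result with the conification. The only noteworthy difference is one of packaging: you invoke the K\"ahler quotient construction as a known black box to get integrability of $J$ on $M$, whereas the paper computes $J'={}^h\nabla\eta$ on $P$ by hand and checks directly that the Nijenhuis tensor of the induced $J$ vanishes (by lifting it to that of $\hat{J}$). Your explicit production of the $t$-coordinate via the Poincar\'e lemma is also a little more detailed than the paper, which simply passes to a local trivialization $P\cong\mathbb{R}\times M$ with $\eta=\partial_t$.
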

\begin{proof}
We work on an open subset of $\hat{M}$ such that on this subset, $(\hat{M},\hat{g})$ is of the form $(\hat{M}=\mathbb{R}_{>0}\times P,\hat{g}=dr^2+r^2h)$. Consider the vector fields  $\xi= r\partial r$ and $\eta=\hat{J}\xi$. Since $\eta $ is orthogonal to $\xi$, the derivative of $r$ in the direction of $\eta$ is zero and therefore $\partial_r$ commutes with $\eta$. Consequently, $\eta$ is essentially a vector field on the manifold $P$, i.e. in a coordinate system $(r,x^1,...,x^{2n+1})$, where $x^1,...,x^{2n+1}$ denote coordinates on $P$, the $\partial_r$-component of $\eta$ is zero and the $\partial_{x^i}$-components do not depend on $r$.

Inserting $\eta$ into the metric $\hat{g}$, we see that $h(\eta,\eta)=1$ and since $\eta$ is a Killing vector field for $\hat{g}$ it follows that $\eta$ is also Killing for $h$. 

Let us take the quotient of $P$ by the action of the local flow of $\eta$, to obtain a (local quotient) bundle $\pi:P\rightarrow M$, where $M$ is a manifold of real dimension $2n$. Since we are working locally anyway, this bundle can be viewed as $P=\mathbb{R}\times M$ with coordinate $t$ on the $\mathbb{R}$-component such that $\eta=\partial_t$. Further, we introduce the $1$-form $\theta=h(\eta,.)$ on $P$ and denote by $\mathcal{H}=\mathrm{kern}\,\theta$, the horizontal distribution. Since $\eta$ is Killing, this distribution is invariant with respect to the action of the flow of $\eta$. For tangent vectors $X,Y\in T_p M$, we denote by $X^\theta,Y^\theta \in \mathcal{H}$ their horizontal lifts to a certain point in $\pi^{-1}(p)\subseteq P$. Defining 
$$g(X,Y)=h(X^\theta,Y^\theta),$$
we see that the right-hand side does not depend on the choice of the base point of $X^\theta,Y^\theta$ in $\pi^{-1}(p)$, hence $g$ defines a Riemannian metric on $M$. 

Consider the endomorphism $J'=\hnabla \eta:TP\rightarrow TP$. From \eqref{eq:LCcone} it immediately follows that 
$$J'\eta=\hnabla_\eta \eta=\hat{\nabla}_\eta \eta-h(\eta,\eta)\xi=\hat{J}\eta+\xi=0.$$
In the same way, we obtain 
$$J'X^\theta=\hnabla_{X^\theta}\eta=\hat{\nabla}_{X^\theta}\eta=\hat{J}X^\theta.$$

Thus, we have that $J':\mathcal{H}\rightarrow \mathcal{H}$ defines an almost complex structure and by setting $(JX)^\theta=J'X^\theta$, we obtain an almost complex structure $J:TM\rightarrow TM$ on $M$ which is indeed independent of the choice of base point of the lift $X^\theta$ of $X\in T_p M$, since the flow of $\eta$ preserves $\hat{J}$.

Using the definition of $(g,J)$ and the fact that $\eta$ is a Killing vector field for $h$, we obtain 
$$g(X,JY)=h(X^\theta,\hnabla_{Y^\theta}\eta)=-\frac{1}{2}d\theta(X^\theta,Y^\theta).$$ 

On the one hand, this shows that $\omega=g(.,J.)$ is a $2$-form or equivalently, that $g$ is hermitian with respect to $J$, i.e., $g(J.,J.)=g$. On the other hand, since $d\theta$ is horizontal in the sense that it vanishes when $\eta$ is inserted, this shows that $d\theta=-2\pi^{*}\omega$. From this, it also follows that $\omega$ is closed. 

Since $\theta([X^\theta,Y^\theta])=-d\theta(X^\theta,Y^\theta)=2\omega(X,Y)$, we obtain that $[X,Y]^\theta=[X^\theta,Y^\theta]-2\omega(X,Y)\eta$. Using this, it is straight-forward to see that the Nijenhuis torsion 
$$N_{J}(X,Y)=[X,Y]-[JX,JY]+J[JX,Y]+J[X,JY]$$
of $J$ lifts to the corresponding Nijenhuis torsion of $\hat{J}$ which is vanishing, more precisely 
$$(N_{J}(X,Y))^\theta=N_{\hat{J}}(X^\theta,Y^\theta)=0.$$
Thus, $J$ is integrable and $(M,g,J)$ is a K\"ahler manifold. From our construction, it is clear that $(\hat{M},\hat{g},\hat{J})$ coincides with the conification of $(M,g,J)$. This completes the proof of the lemma.
\end{proof}
\begin{rem}
The construction of $g$ from $\hat{g}$ as presented in Lemma \ref{lem:coneconstruction1b} is of course well-known and coincides with the K\"ahler quotient of $(\hat{M},\hat{g},\hat{J})$ w.r.t. the action of the hamiltonian Killing vector field $\eta$, see \cite{Hitchin} for a short explanation of K\"ahler quotients and symplectic reduction. The fact that the conification procedure can be reversed by taking the K\"ahler quotient was also mentioned in \cite{ACM}. 
\end{rem}

\subsection{Proof of Theorem \ref{thm:coneconstruction2}}\label{sec:isom}
Let us first recall the following fact proved before for example in \cite[Theorem 8]{FedMat}, \cite[Lemma 1]{Matveev2010} or \cite[Proposition 3.1]{Mounoud2010}: 

\begin{thm}[\cite{FedMat,Matveev2010,Mounoud2010}]
There is an isomorphism between the space of symmetric parallel $(0,2)$-tensors $\hat{A}$ on the Riemannian cone $(\hat{M}=\mathbb{R}_{>0}\times P,\hat{g}=dr^2+r^2h)$ and solutions $(L,\sigma,\rho)\in \Gamma(S^2T^* M\oplus T^*M\oplus \mathbb{R})$ of the linear PDE system
\begin{align}
\begin{array}{c}
(\hnabla_{Z} L) (X,Y)=h(Z,X)\sigma(Y)+h(Z,Y)\sigma(X)\vspace{1mm}\\
(\hnabla_{Z} \sigma) (X)=\rho h(Z,X)-L(Z,X)\vspace{1mm}\\
\hnabla_{Z} \rho =-2\sigma(Z)
\end{array}\label{eq:system}
\end{align}
on $(P,h)$. The isomorphism is explicitly given by 
\begin{align}
(L,\sigma,\rho)\leftrightarrow \hat{A}=\rho dr^2-r dr\odot \sigma+r^2L,\label{eq:isomorphism}
\end{align}
where we omitted the symbol for the pullback of objects from $P$ to $\hat{M}$. 
\end{thm}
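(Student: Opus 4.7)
The plan is to write any symmetric $(0,2)$-tensor $\hat A$ on $\hat M=\mathbb{R}_{>0}\times P$ in the product-adapted form
$$\hat A = a(r,p)\,dr^2 + dr\odot \beta(r,p) + B(r,p),$$
with $a$ a function, $\beta$ a $1$-form on $P$, and $B$ a symmetric $(0,2)$-tensor on $P$ (all three depending on $r$ as a parameter), and then to extract the content of $\hat\nabla\hat A=0$ in two stages: first evaluate it with $Z=\partial_r$ to pin down the $r$-dependence of $(a,\beta,B)$, and then with $Z\in TP$ to obtain the PDE system \eqref{eq:system} on $P$ for the residual quantities.

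First I would record the cone connection formulas that follow from \eqref{eq:LCcone}: $\hat\nabla_{\partial_r}\partial_r=0$, $\hat\nabla_{\partial_r}X=X/r$ for $X\in TP$, $\hat\nabla_Z\partial_r=Z/r$ for $Z\in TP$, and $\hat\nabla_ZX=\hnabla_ZX-h(Z,X)\xi$ for $Z,X\in TP$. Substituting $Z=\partial_r$ into $(\hat\nabla_Z\hat A)(X,Y)=0$ and separating by whether $X,Y$ are $\partial_r$ or lie in $TP$ yields three elementary first-order linear ODEs in $r$: $\partial_r a=0$, $\partial_r\beta=\beta/r$, and $\partial_r B=(2/r)B$. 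These force $a=\rho(p)$, $\beta=-r\,\sigma(p)$, $B=r^2 L(p)$ with $\rho,\sigma,L$ independent of $r$, which is exactly the ansatz \eqref{eq:isomorphism} (the sign on $\sigma$ is chosen to make the final formulas match).

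With the $r$-dependence fixed, I would next evaluate $(\hat\nabla_Z\hat A)(X,Y)=0$ for $Z\in TP$ in the three remaining argument patterns. When both $X,Y\in TP$, the extra term $-h(Z,X)\xi$ in $\hat\nabla_ZX$ produces, via $\hat A(\xi,Y)=-r^2\sigma(Y)$, a contribution that upon dividing by $r^2$ leaves exactly the first equation of \eqref{eq:system}. When $X\in TP$, $Y=\partial_r$, the same mechanism produces $\hat A(\xi,\partial_r)=r\rho$ together with $\hat A(X,\hat\nabla_Z\partial_r)=r L(Z,X)$, whose combined contribution to the parallel equation is the second equation of \eqref{eq:system}. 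Finally, for $X=Y=\partial_r$, only the term $-2\hat A(\hat\nabla_Z\partial_r,\partial_r)=2\sigma(Z)$ survives, giving $Z(\rho)=-2\sigma(Z)$, which is the third equation. The converse direction is a direct verification: given $(L,\sigma,\rho)$ solving \eqref{eq:system}, define $\hat A$ by \eqref{eq:isomorphism} and run the same case analysis backwards.

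The main obstacle is organizational rather than conceptual: one must consistently treat $\rho,\sigma,L$ as pull-backs from $P$ (hence constant along $\partial_r$), keep track of the sign conventions for the symmetric product $\odot$, and not lose the overall powers of $r$ that appear and then cancel in each component equation. Once this bookkeeping is fixed, the six component equations of $\hat\nabla\hat A=0$ split cleanly into three ODEs in $r$ (determining the ansatz) and three PDEs on $P$ (which are precisely \eqref{eq:system}), establishing the claimed bijection.
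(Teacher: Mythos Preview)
Your argument is correct: the six components of $\hat\nabla\hat A=0$ split exactly as you describe, with the $\partial_r$-derivative fixing the $r$-dependence and the $TP$-derivative yielding \eqref{eq:system}. There is no proof in the paper to compare against---the theorem is merely quoted from \cite{FedMat,Matveev2010,Mounoud2010} as a known fact, so you have supplied precisely the direct verification that the paper omits.
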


Now let $(P=\mathbb{R}\times M,h=\theta^2+g)$ be defined as in the previous section, where $(M,g,J)$ is a $2n$-dimensional K\"ahler manifold with exact K\"ahler $2$-form. Let us prove  a technical lemma that gives us  a characterisation of solutions $(L,\sigma,\rho)$ of the system \eqref{eq:system} on $(P,h)$ which are invariant with respect to the action of the flow of $\eta$:
\begin{lem}\label{lem:dtinvariance}
The solution $(L,\sigma,\rho)$ of \eqref{eq:system} on $(P,h)$ is $\eta$-invariant if and only if
\begin{align}
\sigma(\eta)=0,\,\sigma((JX)^{\theta})=L(\eta,X^{\theta}),L((JX)^{\theta},(JX)^{\theta})=L(X^{\theta},Y^{\theta})\mbox{ and }\rho=L(\eta,\eta).\label{eq:dtinvariance}
\end{align}
for all $X,Y\in TM$. \weg{Hence, for $\eta$-invariant solutions $(L,\sigma,\rho)$, the components $\sigma,\rho$ depend algebraically on $L$.}
\end{lem}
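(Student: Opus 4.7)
The plan is to reduce the $\eta$-invariance conditions $L_\eta\rho=0$, $L_\eta\sigma=0$ and $L_\eta L=0$ to pointwise algebraic identities by a direct calculation that uses only the system \eqref{eq:system} and the connection formulas \eqref{eq:connsasaki}. The single tool needed is the identity
$$ (L_\eta T)(Y_1,\dots,Y_k)=(\hnabla_\eta T)(Y_1,\dots,Y_k)+\sum_{i=1}^{k} T(Y_1,\dots,\hnabla_{Y_i}\eta,\dots,Y_k),$$
valid for any vector field $\eta$ and any $(0,k)$-tensor $T$, which follows from torsion-freeness of $\hnabla$ by rewriting each Lie bracket $[\eta,Y_i]$ as $\hnabla_\eta Y_i-\hnabla_{Y_i}\eta$.

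Applying this identity and testing on the generators $\{\eta,X^\theta\}$ of $TP$, while using $\hnabla_\eta\eta=0$, $\hnabla_{X^\theta}\eta=(JX)^\theta$ together with $h(\eta,\eta)=1$ and $h(\eta,X^\theta)=0$, produces the following. From $L_\eta\rho=\hnabla_\eta\rho=-2\sigma(\eta)$ I get $\sigma(\eta)=0$. Evaluating $L_\eta\sigma=0$ on $\eta$ and on $X^\theta$ gives $\rho=L(\eta,\eta)$ and $\sigma((JX)^\theta)=L(\eta,X^\theta)$. Finally, evaluating $L_\eta L=0$ on the pairs $(\eta,\eta)$, $(\eta,X^\theta)$ and $(X^\theta,Y^\theta)$ returns $\sigma(\eta)=0$, $\sigma(X^\theta)+L(\eta,(JX)^\theta)=0$, and $L((JX)^\theta,Y^\theta)+L(X^\theta,(JY)^\theta)=0$, respectively.

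At this point one observes that the second relation produced by $L_\eta L=0$ is, after replacing $X$ by $-JX$ and using $J^2=-\Id$, identical to $\sigma((JX)^\theta)=L(\eta,X^\theta)$ already extracted from $L_\eta\sigma=0$, so it is redundant. The third, after substituting $Y\mapsto JY$, becomes the hermitian identity $L((JX)^\theta,(JY)^\theta)=L(X^\theta,Y^\theta)$, which is the third condition of \eqref{eq:dtinvariance}. Collecting the surviving identities reproduces precisely the four conditions in \eqref{eq:dtinvariance}, and since every step above is an equivalence, both directions of the lemma follow at once.

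The main (and really only) obstacle is bookkeeping: one must enumerate carefully which combinations of $\eta$ and horizontal lifts to test, and notice the cross-redundancy between the identities produced by $L_\eta\sigma$ and by $L_\eta L$. Once this is observed, the content of the lemma is simply a sequence of one-line substitutions using \eqref{eq:connsasaki} and \eqref{eq:system}, with no genuine analytic difficulty.
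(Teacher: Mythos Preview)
Your argument is correct and essentially identical to the paper's own proof: both compute the Lie derivatives via $\mathcal{L}_\eta T=\hnabla_\eta T+\sum T(\dots,\hnabla_{Y_i}\eta,\dots)$, evaluate on the generators $\eta$ and $X^\theta$ using \eqref{eq:connsasaki} and \eqref{eq:system}, and observe the redundancy between the $(\eta,X^\theta)$-component of $\mathcal{L}_\eta L$ and the $X^\theta$-component of $\mathcal{L}_\eta\sigma$. The only difference is the order in which you treat $\rho$, $\sigma$, $L$, which is immaterial.
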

\begin{proof}
Using \eqref{eq:system} and \eqref{eq:connsasaki}, we have that $\mathcal{L}_{\eta}L=0$ if and only if 
$$0=(\mathcal{L}_{\eta}L)(\eta,\eta)=2\sigma(\eta),\,\,\,0=(\mathcal{L}_{\eta}L)(\eta,X^{\theta})=\sigma(X^{\theta})+L(\eta,(JX)^{\theta}),$$
$$0=(\mathcal{L}_{\eta}L)(X^{\theta},Y^{\theta})=L((JX)^{\theta},Y^{\theta})+L(X^{\theta},(JY)^{\theta}).$$
These are the first three equations in \eqref{eq:dtinvariance}. From the invariance of $\sigma$ it follows
$$0=(\mathcal{L}_{\eta}\sigma)(\eta)=\rho-L(\eta,\eta),$$
which is the last equation in \eqref{eq:dtinvariance}. The condition
$$0=(\mathcal{L}_{\eta}\sigma)(X^{\theta})=-L(\eta,X^{\theta})+\sigma((JX)^{\theta})$$
is equivalent to the second equation in \eqref{eq:dtinvariance}. The invariance of $\rho$ is satisfied automatically since $\mathcal{L}_\eta\rho=-2\sigma(\eta)=0$. 
\end{proof}

Next we show
\begin{lem}\label{lem:isom}
There is an isomorphism between the space of solutions $(A,\lambda,\mu)$ of \eqref{eq:hprosystem} on $(M,g,J)$ for $B=-1$ and $\eta$-invariant solutions $(L,\sigma,\rho)$ of \eqref{eq:system} on $(P,h)$.

With respect to the decomposition $TP=\mathbb{R}\,\eta\oplus\mathcal{H}$, the correspondence is given by 
\begin{align}
L=\mu\theta^2+\theta\otimes \lambda(J.)+\lambda(J.)\otimes\theta+A,\,\sigma=\lambda,\,\rho=\mu,\label{eq:A'sasaki}
\end{align}
where we omit the symbol for the pullback of objects from $M$ to $P$.
\end{lem}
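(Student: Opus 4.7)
The plan is to construct the map $(A,\lambda,\mu)\mapsto(L,\sigma,\rho)$ by the explicit formula \eqref{eq:A'sasaki}, verify that the image automatically satisfies the $\eta$-invariance conditions of Lemma \ref{lem:dtinvariance} as well as the system \eqref{eq:system}, and then invert the correspondence by restricting $L$, $\sigma$, $\rho$ to horizontal arguments.

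For the forward direction, I first note that $\eta$-invariance comes essentially for free: $\sigma=\lambda$ and the $A$-block of $L$ are $\pi$-pullbacks from $M$, hence killed by $\mathcal{L}_\eta$ (since $\pi_*\eta=0$), while $\theta$ is $\eta$-invariant because $\mathcal{L}_\eta\theta=i_\eta d\theta+d(\theta(\eta))=-2\,i_\eta\omega+d(1)=0$. Moreover $A$ being hermitian makes $L((JX)^\theta,(JY)^\theta)=L(X^\theta,Y^\theta)$ automatic, and all remaining identities in \eqref{eq:dtinvariance} are direct consequences of the definition \eqref{eq:A'sasaki}.

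The heart of the proof is checking \eqref{eq:system} by splitting arguments into $\eta$ and horizontal pieces and using \eqref{eq:connsasaki}. For instance, with $X,Y,Z$ horizontal lifts, one has
\[
(\hnabla_{Z^\theta}L)(X^\theta,Y^\theta)=(\nabla_Z A)(X,Y)-\omega(Z,X)\lambda(JY)-\omega(Z,Y)\lambda(JX),
\]
because the $\omega(Z,X)\eta$ terms produced by $\hnabla_{Z^\theta}X^\theta$ feed into the $\theta\odot\lambda(J.)$ summand of $L$ and contribute $-\omega(Z,X)\lambda(JY)$, etc. Substituting the first equation of \eqref{eq:hprosystem} with $B=-1$ yields exactly $g(Z,X)\lambda(Y)+g(Z,Y)\lambda(X)=h(Z^\theta,X^\theta)\sigma(Y^\theta)+h(Z^\theta,Y^\theta)\sigma(X^\theta)$. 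The mixed and pure-$\eta$ cases reduce similarly: the identities $\hnabla_\eta\eta=0$, $\hnabla_\eta X^\theta=\hnabla_{X^\theta}\eta=(JX)^\theta$ together with the second and third equations of \eqref{eq:hprosystem} (using $B=-1$, so $(\nabla_Z\lambda)(X)=\mu g(Z,X)-A(Z,X)$ and $\nabla_Z\mu=-2\lambda(Z)$) produce $-L(Z,X)+\rho h(Z,X)$ and $-2\sigma(Z)$ respectively; the $J$-twist in $\theta\odot\lambda(J.)$ and the sign from $d\theta=-2\omega$ are what make the $B=-1$ normalization the correct one.

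For the inverse direction, an $\eta$-invariant $(L,\sigma,\rho)$ is, by Lemma \ref{lem:dtinvariance}, completely determined by its restriction to $\mathcal{H}\times\mathcal{H}$: setting $A(X,Y):=L(X^\theta,Y^\theta)$, $\lambda(X):=\sigma(X^\theta)$, $\mu:=\rho$, we get well-defined tensors on $M$ (by $\eta$-invariance), $A$ is symmetric hermitian (from the third condition of \eqref{eq:dtinvariance}), and one recovers $L$ in the form \eqref{eq:A'sasaki} using $\rho=L(\eta,\eta)$ and $L(\eta,X^\theta)=\sigma((JX)^\theta)=\lambda(JX)$. Reading the system \eqref{eq:system} on horizontal lifts reproduces \eqref{eq:hprosystem} with $B=-1$, completing the bijection. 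The main obstacle is bookkeeping of the case split in the computation above, particularly tracking how the $\omega$-curvature of the connection $\theta$ converts horizontal Levi-Civita derivatives on $P$ into $M$-derivatives plus $J$-skew correction terms; once this is organized, each equation of \eqref{eq:system} matches one equation of \eqref{eq:hprosystem} tautologically.
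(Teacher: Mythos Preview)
Your proposal is correct and follows essentially the same approach as the paper's own proof: a direct case-by-case verification of \eqref{eq:system} using the Levi-Civita formulas \eqref{eq:connsasaki}, followed by the inverse reading of \eqref{eq:system} on horizontal lifts to recover \eqref{eq:hprosystem}. The only cosmetic difference is that you single out the $\eta$-invariance check in the forward direction as a separate preliminary step, whereas the paper leaves it implicit (the constructed $(L,\sigma,\rho)$ are manifestly pullbacks and $\theta$-built, so it is automatic) and instead verifies the full list of cases for \eqref{eq:system} explicitly.
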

\begin{proof}
First let us show, that $(L,\sigma,\rho)$ in \eqref{eq:A'sasaki} defines a solution of \eqref{eq:system}. By direct calculation using the formulas \eqref{eq:connsasaki} for the Levi-Civita connection of $h$ we obtain
$$(\hnabla_{\eta}L)(\eta,\eta)=\eta L(\eta,\eta)-2L(\hnabla_{\eta}\eta,,\eta)=0=h(\eta,\eta)\sigma(\eta)+h(\eta,\eta)\sigma(\eta),$$
$$(\hnabla_{\eta}L)(X^{\theta},\eta)=-L((JX)^{\theta},\eta)=\lambda(X)=h(\eta,X^{\theta})\sigma(\eta)+h(\eta,\eta)\sigma(X^{\theta}),$$
$$(\hnabla_{\eta}L)(X^{\theta},Y^{\theta})=-A(JX,Y)-A(X,JY)=0=h(\eta,X^{\theta})\sigma(Y^{\theta})+h(\eta,Y^{\theta})\sigma(X^{\theta}).$$
Further, using the equations in \eqref{eq:hprosystem} with $B=-1$, we calculate
$$(\hnabla_{Z^{\theta}}L)(\eta,\eta)=Z(\mu)+2\lambda(Z)=0=h(Z^{\theta},\eta)\sigma(\eta)+h(Z^{\theta},\eta)\sigma(\eta),$$
$$(\hnabla_{Z^{\theta}}L)(X^{\theta},\eta)=(\nabla_Z \lambda)(JX)-\mu g(Z,JX)+A(Z,JX)=0=h(Z^{\theta},X^{\theta})\sigma(\eta)+h(Z^{\theta},\eta)\sigma(X^{\theta})$$
and finally
$$(\hnabla_{Z^{\theta}}L)(X^{\theta},Y^{\theta})=g(Z,X)\lambda(Y)+g(Z,Y)\lambda(X)=h(Z^{\theta},X^{\theta})\sigma(Y^{\theta})+h(Z^{\theta},Y^{\theta})\sigma(X^{\theta}).$$
We have shown that $L$ defined in \eqref{eq:A'sasaki} satisfies the first equation in  \eqref{eq:system}. For $\sigma$ we obtain
$$(\hnabla_{\eta}\sigma)(\eta)=0=\rho h(\eta,\eta)-L(\eta,\eta),$$
$$(\hnabla_{\eta}\sigma)(X^{\theta})=-\sigma((JX)^{\theta})=-\lambda(JX)=\rho h(\eta,X^{\theta})-L(\eta,X^{\theta})$$
and
$$(\hnabla_{Z^{\theta}}\sigma)(X^{\theta})=\mu g(Z,X)-A(Z,X)=\rho h(Z^{\theta},X^{\theta})-L(Z^{\theta},X^{\theta}).$$
Thus, $\sigma$ satisfies the second equation in  \eqref{eq:system}. Finally, for $\rho$ we have 
$$\hnabla_{\eta}\rho=0=-2\sigma(\eta)\mbox{ and }\hnabla_{X^{\theta}}\rho=-2\lambda(X)=-2\sigma(X^{\theta}).$$

Now we show that under the correspondence \eqref{eq:A'sasaki}, the $\eta$-invariant solution $(L,\sigma,\rho)$ of \eqref{eq:system} descends to a solution $(A,\lambda,\mu)$ of \eqref{eq:hprosystem} with $B=-1$. Using \eqref{eq:dtinvariance}, we calculate
$$(\nabla_{Z}A)(X,Y)=(\hnabla_{Z^{\theta}}L)(X^{\theta},Y^{\theta})+\omega(Z,X)\sigma((JY)^\theta)+\omega(Z,Y)\sigma((JX)^\theta)$$
$$=h(Z^{\theta},X^{\theta})\sigma(Y^{\theta})+h(Z^{\theta},Y^{\theta})\sigma(X^{\theta})+\omega(Z,X)\sigma((JY)^{\theta})+\omega(Z,Y)\sigma((JX)^{\theta})$$
$$=g(Z,X)\lambda(Y)+g(Z,Y)\lambda(X)+\omega(Z,X)\lambda(JY)+\omega(Z,Y)\lambda(JX),$$
which is the first equation in \eqref{eq:hprosystem}. To verify the second equation, we calculate
$$(\nabla_{Z}\lambda)(X)=(\hnabla_{Z^{\theta}}\sigma)(X^{\theta})=\rho h(Z^{\theta},X^{\theta})-L(Z^{\theta},X^{\theta})=\mu g(Z,X)-A(Z,X).$$
Finally, $\nabla_Z \mu=-2\sigma(Z^{\theta})=-2\lambda(Z)$. This completes the proof of Lemma \ref{lem:isom}.
\end{proof}

Recall that we already have an isomorphism \eqref{eq:isomorphism} between the space of symmetric parallel $(0,2)$-tensors $\hat{A}$ on $(\hat{M},\hat{g},\hat{J})$ and solutions $(L,\sigma,\rho)$ of \eqref{eq:system} on $(P,h)$. To prove Theorem \ref{thm:coneconstruction2}, it remains to show that hermitian symmetric parallel $(0,2)$-tensors $\hat{A}$ correspond to $\eta$-invariant solutions $(L,\sigma,\rho)$. 

Using the definition of $\hat{J}$, it is easy to see that $\hat{A}$ in \eqref{eq:isomorphism} is hermitian, i.e., $\hat{A}(\hat{J}.,\hat{J}.)=\hat{A}$, if and only if $(L,\sigma,\rho)$ satisfies the equations \eqref{eq:dtinvariance}. By Lemma \ref{lem:dtinvariance}, it follows that $\hat{A}$ is hermitian if and only if $(L,\sigma,\rho)$ is $\eta$-invariant. This completes the proof of Theorem \ref{thm:coneconstruction2}.

\section{How  to reduce the investigation of $\mathcal{A}(g,J)$ of dimension $\ge 3$ locally to the investigation of the space of parallel hermitian $(0,2)$-tensors on the conification} 
\label{sec:mainfeatures}

If $B=-1$ in \eqref{eq:hprosystem}, the  reduction was done in the previous section. For our purposes it is sufficiently to assume that the manifold is diffeomorphic to the $2n$-dimensional  ball, the transition ``any ball'' $\longrightarrow$ ``any simply-connected manifold'' will be done in Section \ref{sec:proofthmdegreeglobal}. The goal of the section is to show that on every  neighborhood $U$ that is   diffeomorphic to the ball and  such that the closure of $U$ is compact one can achieve $B=-1$ by replacing the metric by its c-projectively equivalent. If $B\ne  0$, this could be done  by a  scaling of $g$, see the proof of Corollary  \ref{cor:changeofmetric}. If $B=0$, then we do need to change the metric  by a essentially  c-projectively equivalent one:

\begin{lem}\label{lem:changeofmetric}
Let $(M,g,J)$ be a connected Riemannian K\"ahler manifold of real dimension $2n\geq 4$. Assume there exists a solution  $(A, \lambda, \mu)$ 
of \eqref{eq:hprosystem}  with $B=0$ such that $\lambda \ne 0$. 

Then for  every open simply connected subset $U\subseteq M$ with compact closure, there exists  a Riemannian K\"ahler metric $\tilde{g}$ on $U$ 
 that is c-projectively equivalent to $g$, 
 such that  for  any solution  $(\tilde A,\tilde \lambda)$  of the system  \eqref{eq:mainA} for the metric $\tilde g$ 
  there exists  a function $\tilde \mu$ such that  $(\tilde A,\tilde \lambda, \tilde \mu)$
  satisfies \eqref{eq:hprosystem} for the metric   $\tilde{g}$  and such that the corresponding  constant $\tilde{B}$ is different from $0$.
\end{lem}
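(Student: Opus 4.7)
The plan is to construct a one-parameter family of Riemannian Kähler metrics $\tilde g_s$ on $U$, each c-projectively equivalent to $g$, and then show that their associated constants $\tilde B_s$ cannot all vanish. Set $\tilde A_s := g + sA$; by Remark \ref{rem:metricsol} and the linearity of \eqref{eq:mainA}, the triple $(\tilde A_s, s\lambda, s\mu)$ solves the full system \eqref{eq:hprosystem} with the same $B=0$. Since $\overline U$ is compact and $\tilde A_0 = g$ is positive definite, $\tilde A_s$ is uniformly $g$-positive-definite on $\overline U$ for all $s$ in an open interval $I \ni 0$. Applying \eqref{eq:inverse} to $\tilde A_s$ then yields a Riemannian Kähler metric $\tilde g_s := \tfrac{1}{\sqrt{\det \tilde A_s}}\, g\, \tilde A_s^{-1}\, g$ on $U$, c-projectively equivalent to $g$.

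For each $s \in I$, Theorem \ref{thm:hprosystem} supplies a unique scalar $\tilde B_s$ making the prolonged system hold for $(\tilde g_s, J)$. Using the canonical isomorphism $\mathcal A(g,J) \cong \mathcal A(\tilde g_s, J)$ from Remark \ref{rem:isomorphism}, the original data $(\tilde A_s, s\lambda, s\mu)$ can be transported to a solution of \eqref{eq:hprosystem} expressed in terms of $\tilde\nabla$. The connection change $\tilde\nabla - \nabla$ is given by \eqref{eq:lchpro} with $\Phi = d\phi$ and $\phi$ as in \eqref{M2}, so substituting \eqref{eq:lchpro} into the second and third equations of \eqref{eq:hprosystem} (written for $\tilde g_s$) produces an explicit expression for $\tilde B_s$ as an analytic function of $s$ on a neighborhood of $0$. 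Since $(g, 0, 0)$ with $B=0$ corresponds to $s = 0$, one immediately sees $\tilde B_0 = 0$.

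The main obstacle, and the real content of the lemma, is to verify that $\tilde B_s \not\equiv 0$ on $I$. This is where the hypothesis $\lambda \neq 0$ is essential. The idea is to expand $\tilde B_s$ in powers of $s$ at $s=0$ and show that the first nonvanishing coefficient is a nontrivial expression built from $\mu$, the tensor $A$, and the squared norm $g(\lambda^\sharp, \lambda^\sharp)$. Because $B=0$ combined with \eqref{eq:hprosystem} forces $\mu$ to be a constant and $\lambda$ to satisfy $\nabla\lambda = \mu g$, the function $g(\lambda^\sharp, \lambda^\sharp)$ is either identically zero (in which case $\mu \neq 0$ on $U$) or takes nonzero values (the case $\mu = 0$, when $\lambda$ is parallel and $\lambda \neq 0$ by hypothesis). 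In either subcase, one shows that this leading coefficient cannot vanish identically on $U$, producing a contradiction with $\tilde B_s \equiv 0$.

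Once $\tilde B_s \not\equiv 0$ is established, pick any $s_0 \in I$ with $\tilde B_{s_0} \neq 0$. The metric $\tilde g := \tilde g_{s_0}$ is then a Riemannian Kähler metric on $U$, c-projectively equivalent to $g$, whose uniquely associated constant $\tilde B = \tilde B_{s_0}$ is nonzero, completing the proof of the lemma.
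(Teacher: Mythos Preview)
Your strategy matches the paper's: deform $g$ within its c-projective class along a one-parameter family and show the transformed constant cannot remain zero. The paper, however, makes the key step explicit by first proving the closed formula
\[
\tilde B=(\det A)^{1/2}\bigl(g(A^{-1}\Lambda,\Lambda)-\mu\bigr)
\]
for the constant attached to the metric determined by $A=A(g,\tilde g)$; you only gesture at ``an explicit expression'' obtained by substituting \eqref{eq:lchpro}, so your Taylor expansion has nothing to stand on. The paper also chooses a different family: it first reduces to $\mu\neq 0$ via a separate construction (when $\mu=0$, so $\lambda$ is parallel, it builds a new rank-two solution $\sigma\otimes\sigma+\sigma(J\cdot)\otimes\sigma(J\cdot)$ from $\sigma=Ag^{-1}\lambda-f\lambda+f'\lambda(J\cdot)$ and checks its $\tilde\mu=|\lambda|_g^4\neq 0$), and only then deforms along $A(t)=g+t\bigl(\lambda\otimes\lambda+\lambda(J\cdot)\otimes\lambda(J\cdot)\bigr)$, for which the formula gives $\tilde B'(0)=-1$ immediately. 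Your family $g+sA$ is more direct and actually avoids that auxiliary lemma: plugging into the formula yields $\tilde B_s'(0)=-\mu$, and if $\mu=0$ then $\tilde B_s''(0)=2|\Lambda|_g^{2}>0$ since $\Lambda$ is parallel and nonzero --- but your dichotomy paragraph is garbled and does not actually say this. One more point: invoking Theorem~\ref{thm:hprosystem} to produce $\tilde B_s$ presupposes $D(g,J)\geq 3$, which the lemma does not assume; what you really do is transport the \emph{specific} solution and read $\tilde B_s$ off the formula above, and for $D=2$ the conclusion for \emph{all} solutions then follows because every element of $\mathcal A(\tilde g_s,J)$ is a linear combination of $\tilde g_s$ and the transported one.
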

The proof of Lemma \ref{lem:changeofmetric} will be given in Section \ref{sec:proofoflemma}.

As we already remarked, $D(g,J)$ is the same for all c-projectively equivalent metrics. As an direct application of Lemma \ref{lem:changeofmetric} we obtain 
\begin{cor}\label{cor:changeofmetric}
Let $(M,g,J)$ be a connected Riemannian K\"ahler manifold of real dimension $2n\geq 4$ and of degree of mobility $D(g,J)\geq 3$. Suppose there exists at least one metric c-projectively equivalent to $g$ and not affinely equivalent to it. Then, on each open simply connected subset $U\subseteq M$ with compact closure, the degree of mobility $D(g_{|U},J_{|U})$ is equal to the dimension of the space of solutions of \eqref{eq:hprosystem} with $B=-1$ for a certain positively or negatively definite K\"ahler metric $\tilde{g}$ on $U$ that is c-projectively equivalent to $g$. 
\end{cor}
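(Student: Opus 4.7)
The plan is to combine Theorem \ref{thm:hprosystem}, a constant rescaling of the metric, and Lemma \ref{lem:changeofmetric}. Since $D(g,J) \geq 3$, Theorem \ref{thm:hprosystem} applies and produces a unique constant $B$ such that every $A \in \mathcal{A}(g,J)$ extends uniquely to a solution $(A,\lambda,\mu)$ of \eqref{eq:hprosystem}. In particular, the dimension of the solution space of \eqref{eq:hprosystem} for this $B$ equals $D(g,J)$, and the analogous statement holds after restricting to $U$. Hence it is enough to exhibit, on $U$, a positively or negatively definite K\"ahler metric $\tilde g$ that is c-projectively equivalent to $g$ and for which the constant appearing in \eqref{eq:hprosystem} equals $-1$.

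First I would record how \eqref{eq:hprosystem} behaves under a constant rescaling. If $\tilde g = c\, g$ with $c \in \mathbb{R}\setminus\{0\}$, then the Levi-Civita connection is unchanged, $\tilde \omega = c\omega$, and a direct substitution into the three equations of \eqref{eq:hprosystem} shows that $(A,\lambda,\mu)$ is a solution for $(g,B)$ if and only if $(\tilde A,\tilde\lambda,\tilde\mu) = (cA,\lambda,\mu/c)$ is a solution for $(\tilde g,\tilde B)$ with $\tilde B = B/c$. Consequently, whenever $B \neq 0$ one may take $c = -B$; the resulting metric $\tilde g = -B\, g$ is (trivially) c-projectively equivalent to $g$, is positive definite if $B<0$ and negative definite if $B>0$, and has the desired $\tilde B = -1$.

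It remains to treat $B=0$, which is precisely where Lemma \ref{lem:changeofmetric} enters. Under the hypothesis of the corollary there is a metric c-projectively but not affinely equivalent to $g$, so by Remark \ref{rem:affine} there exists $A \in \mathcal{A}(g,J)$ whose associated $\lambda$ is not identically zero. The hypotheses of Lemma \ref{lem:changeofmetric} are thus met on $U$, and the lemma produces a Riemannian K\"ahler metric $g'$ on $U$, c-projectively equivalent to $g$, whose associated constant $B'$ (provided by Theorem \ref{thm:hprosystem}, still applicable since $D(g',J) = D(g_{|U},J_{|U}) \geq 3$ by Remark \ref{rem:isomorphism}) is nonzero. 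A further constant rescaling of $g'$, exactly as in the previous paragraph, yields the required $\tilde g$ with $\tilde B = -1$.

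Finally, since $\tilde g$ is c-projectively equivalent to $g_{|U}$, Remark \ref{rem:isomorphism} gives $D(\tilde g,J) = D(g_{|U},J_{|U})$, and Theorem \ref{thm:hprosystem} applied to $(\tilde g,J)$ identifies this number with the dimension of the space of solutions of \eqref{eq:hprosystem} for $\tilde g$ and $\tilde B = -1$, which is the claim. The only genuinely non-trivial step is producing $g'$ in the case $B=0$, and this is absorbed into the separately stated Lemma \ref{lem:changeofmetric}; everything else is a scaling computation and bookkeeping with the bijection of Theorem \ref{thm:hprosystem}.
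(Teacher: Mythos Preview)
Your proof is correct and follows essentially the same approach as the paper: invoke Lemma \ref{lem:changeofmetric} (in the case $B=0$) to produce a c-projectively equivalent metric with nonzero constant, then rescale by $-B'$ to force $\tilde B=-1$, with Remark \ref{rem:isomorphism} and Theorem \ref{thm:hprosystem} taking care of the identification of dimensions. If anything, your version is more explicit than the paper's, which simply writes ``For the rescaled metric $\tilde g=-B'g'$, the system \eqref{eq:hprosystem} holds now with a constant $\tilde B=-1$'' without spelling out the scaling computation or separating the cases $B\neq 0$ and $B=0$.
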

\begin{proof}
By Lemma \ref{lem:changeofmetric}, on every open simply-connected subset $U$ with the required properties, we can find a Riemannian K\"ahler metric $g'$, c-projectively equivalent to $g$, such that $\mathcal{A}(g,J)$ is isomorphic to the space of solutions of the system \eqref{eq:hprosystem} for $g'$ with a certain constants $B'\neq 0$. For the rescaled metric $\tilde{g}= -B'g'$, the system \eqref{eq:hprosystem} holds now with a constant $\tilde{B}=-1$. Depending on the sign of $B'$, the new metric $\tilde{g}$ is either positively or negatively definite.
\end{proof}

\subsection{Proof of Lemma \ref{lem:changeofmetric}: the constant $B$ in the system \eqref{eq:hprosystem} can be made non-zero by an arbitrary small  change of the metric in the c-projective class}\label{sec:proofoflemma}

The proof of Lemma \ref{lem:changeofmetric} is divided into several steps. First we show how the constant $B$ changes if one chooses another metric in the c-projective class.
\begin{lem}\label{lem:changeofmetric1}
Let $(M,g,J)$ be a Riemannian K\"ahler manifold of real dimension $2n\geq 4$ and let $D(g,J)\geq 3$. Suppose $\tilde{g}$ is c-projectively equivalent to $g$ and let $A=A(g,\tilde{g})\in \mathcal{A}(g,J)$ be given by formula \eqref{eq:defA}. Let $\lambda$ and $\mu$ be the $1$-form and function respectively such that $(A,\lambda,\mu)$ constitutes a solution of the system \eqref{eq:hprosystem} for $g$ with constant $B$ and let $\Lambda=g^{-1}\lambda$. Then the constant $\tilde{B}$ in the system \eqref{eq:hprosystem} for $\tilde{g}$ is given by
\begin{align}
\tilde{B}=(\mathrm{det}A)^{\frac{1}{2}}(g(A^{-1}\Lambda,\Lambda)-\mu).\label{eq:trafoB}
\end{align}
\end{lem}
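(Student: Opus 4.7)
Proof plan. My strategy is to determine $\tilde B$ by exhibiting a specific nontrivial solution of \eqref{eq:hprosystem} for the metric $\tilde g$, computing its associated $1$-form and covariant derivative, and reading off the coefficient of $\tilde A$ in the second prolongation. A natural candidate is $\tilde A_0 := A(\tilde g, g) \in \mathcal{A}(\tilde g, J)$, i.e.\ the tensor \eqref{eq:defA} with the roles of $g$ and $\tilde g$ swapped. Since the c-projective change $g\to\tilde g$ is governed by the $1$-form $\Phi$ via \eqref{M1}, the reverse change $\tilde g\to g$ is governed by $-\Phi$; hence \eqref{eq:relationphilambda}, applied in the opposite direction, gives the $1$-form associated to $\tilde A_0$ as $\tilde\lambda_0 = \Phi\circ\tilde g^{-1}\tilde A_0$. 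Using \eqref{eq:defA} and \eqref{eq:inverse}, $\tilde g$ and $\tilde A_0$ can be rewritten in terms of $g, A$ and a normalization scalar $c$; a short calculation then reduces $\tilde\lambda_0$ to a multiple of $\tilde g(\Phi^\sharp,\cdot)$, and combining \eqref{eq:relationphilambda} with the definition $\Lambda = g^{-1}\lambda$ yields $\Phi^\sharp = -A^{-1}\Lambda$.

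Next I would compute $\tilde\nabla\tilde\lambda_0$ via the $1$-form version of \eqref{eq:lchpro}, rewriting $\tilde\nabla$ as $\nabla$ plus a correction quadratic in $\Phi$. The derivative $\nabla\tilde\lambda_0$ reduces to derivatives of the scalar $c$ (controlled by $\nabla(\det A) = (\det A)\,\mathrm{trace}(A^{-1}\nabla A)$ and \eqref{Mbasic}), of $\tilde g$ (via \eqref{eq:inverse} together with \eqref{Mbasic}), and of $\Phi^\sharp = -A^{-1}\Lambda$ (using $\nabla(A^{-1}) = -A^{-1}(\nabla A)A^{-1}$ together with $\nabla\lambda = \mu g + BA$ from \eqref{eq:hprosystem}). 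All resulting expressions are then algebraic in $g, A, \lambda, \mu$, and $B$.

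Substituting into $\tilde\nabla_Z\tilde\lambda_0 = \tilde\mu_0\,\tilde g(Z,\cdot) + \tilde B\,\tilde A_0(Z,\cdot)$ and isolating the coefficient of $\tilde A_0$ (the $\tilde g$-component is absorbed into $\tilde\mu_0$ and plays no role in the statement), I expect the pure $B$-contribution coming out of $\nabla\Phi^\sharp$, which is proportional to the identity endomorphism, to land entirely in the $\tilde g$-component; the $\mu$-term in $\nabla\lambda$ to produce $-(\det A)^{1/2}\mu$ in the $\tilde A_0$-component; and the $\Phi$-quadratic corrections from \eqref{eq:lchpro} together with the $\lambda$-dependent part of $\nabla\Phi^\sharp$ to combine, via $\Phi^\sharp = -A^{-1}\Lambda$, into $(\det A)^{1/2}g(A^{-1}\Lambda,\Lambda)$ in the same component. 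The overall prefactor $(\det A)^{1/2}$ arises from the normalization scalars $c$ in $\tilde g$ and $c^{-1}$ in $\tilde A_0 = c^{-1}\tilde g g^{-1}\tilde g$. This reproduces \eqref{eq:trafoB}.

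The main obstacle is the algebraic bookkeeping in this last step: the $\Phi$-quadratic terms from \eqref{eq:lchpro}, the covariant derivative of $\Phi^\sharp$, and the derivatives of the normalization scalars each contribute non-trivially to both the $\tilde g$- and $\tilde A_0$-components of $\tilde\nabla\tilde\lambda_0$, and one must verify that only the claimed pieces survive in the $\tilde A_0$-component. A convenient shortcut to sidestep the full tensor manipulation is to diagonalize $A$ with respect to $g$ in a $J$-adapted unitary frame at a point, whereupon the tensorial identity reduces to a scalar one that can be verified by inspection.
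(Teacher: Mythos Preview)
Your proposal is correct and follows essentially the same route as the paper: both use the swapped solution $\tilde A_0 = A(\tilde g,g)$, identify its associated vector field $\tilde\Lambda = -(\det A)^{1/2}A^{-1}\Lambda$ via $\Phi^\sharp = -A^{-1}\Lambda$, and then read off $\tilde B$ from the $\tilde A$-component of $\tilde\nabla_X\tilde\Lambda$. The paper carries out the algebra directly rather than diagonalizing $A$ at a point, but the structure of the argument is identical.
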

\begin{proof}
Let us view the tensor $A=A(g,\tilde{g})$ in equation \eqref{eq:defA} equivalently as a $(1,1)$-tensor $g^{-1}A$ by raising the ``left index'' of $A$ by contraction with the inverse metric $g^{-1}$. To simplify notation, we will denote $g^{-1}A$ again by $A$ such that the equation \eqref{eq:mainA} now reads
\begin{align}
\nabla_X A=g(.,X)\Lambda+g(.,\Lambda)X+g(.,JX)J\Lambda+g(.,J\Lambda)JX.\label{eq:mainAalternative}
\end{align}
From the defining equation \eqref{eq:defA} it follows that $A^{-1}=A(\tilde{g},g)$, thus $\tilde{A}=A^{-1}$ is a solution of \eqref{eq:mainAalternative} written down in terms of $\tilde{g}$.
The corresponding vector field $\tilde{\Lambda}$ can be expressed in terms of $A$ and $\Lambda$. Indeed, a straight-forward calculation, using \eqref{eq:lchpro}, \eqref{eq:defA}, \eqref{eq:mainAalternative} and \eqref{eq:relationphilambda} yields
$$\tilde{\nabla}_{X}\tilde{A}=-(\mathrm{det}A)^{\frac{1}{2}}(\tilde{g}(.,X)A^{-1}\Lambda+\tilde{g}(.,A^{-1}\Lambda)X+\tilde{g}(.,JX)JA^{-1}\Lambda+\tilde{g}(.,JA^{-1}\Lambda)JX).$$
Comparing this with the expected form of equation \eqref{eq:mainAalternative} for $\tilde{g}$, we see that
\begin{align}
\tilde{\Lambda}=-(\mathrm{det}A)^{\frac{1}{2}}A^{-1}\Lambda.\label{eq:trafolambda}
\end{align}
We will use this equation to calculate the second equation in the system \eqref{eq:hprosystem} for $\tilde{g}$. First we note that 
$$\nabla_X (\mathrm{det}A)^{\frac{1}{2}}=2(\mathrm{det}A)^{\frac{1}{2}}g(A^{-1}\Lambda,X).$$
Using this together with \eqref{eq:trafolambda}, \eqref{eq:lchpro}, \eqref{eq:relationphilambda} and \eqref{eq:hprosystem}, a straight-forward calculation yields
$$\tilde{\nabla}_X \tilde{\Lambda}=(\mathrm{det}A)^{\frac{1}{2}}(g(A^{-1}\Lambda,A^{-1}\Lambda)-B)X+(\mathrm{det}A)^{\frac{1}{2}}(g(A^{-1}\Lambda,\Lambda)-\mu)\tilde{A}X.$$
Comparing this with the expected form of the second equation in \eqref{eq:hprosystem} for $\tilde{g}$, we see that $\tilde{B}$ is given by \eqref{eq:trafoB} as we claimed.
\end{proof}

We consider the case when for the metric $g$ the constant $B$ in \eqref{eq:hprosystem} is vanishing. The third equation in \eqref{eq:hprosystem} shows that the function $\mu$ is necessarily a constant. Next we show that we can always find a solution $(A,\lambda,\mu)$ of \eqref{eq:hprosystem} such that $\mu\neq 0$.
\begin{lem}\label{lem:changeofmetric2}
Let $(M,g,J)$ be a connected K\"ahler manifold of real dimension $2n\geq 4$ and of degree of mobility $D(g,J)\geq 3$. Suppose the system \eqref{eq:hprosystem} holds for $B=0$ and that at least one metric c-projectively equivalent to $g$ is not affinely equivalent to it. Then on every open simply connected subset $U\subseteq M$, we can find a solution $(A,\lambda,\mu)$ of \eqref{eq:hprosystem} such that $\mu\neq 0$.
\end{lem}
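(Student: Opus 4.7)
I will argue by contradiction: under the supposition that every local solution on $U$ satisfies $\mu=0$, I will show that every such solution also satisfies $\lambda=0$, which will contradict a specific local solution with $\lambda\neq 0$ that I construct from the global non-affine-equivalence hypothesis.

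First the reduction. If some global $A\in\mathcal{A}(g,J)$ already has $\mu_A\neq 0$, its restriction to $U$ is the desired local solution. Otherwise $\mu_A=0$ for every global $A$; by the hypothesis together with Remark \ref{rem:affine}, I fix a non-parallel global $A$. With $B=0$ and $\mu_A=0$, the second and third lines of \eqref{eq:hprosystem} collapse to $\nabla\lambda_A=0$, so $\lambda_A$ is parallel. The first line combined with $\nabla A\not\equiv 0$ forces $\lambda_A\neq 0$ somewhere, and parallel transport together with connectedness of $M$ make $\lambda_A$ nowhere vanishing; in particular $A|_U\in\mathcal{A}(g_{|U},J)$ is a local solution with $\lambda_{A|_U}\neq 0$ everywhere on $U$. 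This is the local solution against which the contradiction will be drawn.

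Now suppose, toward a contradiction, that every $(A,\lambda,\mu)\in\mathcal{A}(g_{|U},J)$ has $\mu=0$. Then $\nabla\lambda=0$, and differentiating the first line of \eqref{eq:hprosystem} gives $\nabla^2 A=0$; the Ricci identity then yields $[\bar A,R(W,Z)]=0$ pointwise, where $\bar A=g^{-1}A$. Hence $\bar A$ commutes with the holonomy algebra at every point of $U$. Since $U$ is simply connected, the de Rham decomposition furnishes a parallel, orthogonal, $J$-invariant splitting $TU=T_0\oplus T_1\oplus\cdots\oplus T_r$ with $T_0$ flat and each $T_i$ ($i\geq 1$) carrying an irreducible holonomy representation inside $\mathfrak{u}(T_i)$. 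Combining the commutation of $\bar A$ with $\mathfrak{hol}$ (which rules out off-diagonal blocks by the irreducibility of each $\mathfrak{hol}_i$) with Schur's lemma and the self-adjointness of $\bar A$ (which excludes a $J|_{T_i}$-component in the centralizer), $\bar A$ becomes block-diagonal with $\bar A|_{T_i}=\alpha_i\,\mathrm{Id}_{T_i}$ for $i\geq 1$ and $\bar A|_{T_0}$ an arbitrary self-adjoint, $J$-commuting endomorphism of $T_0$.

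A cross-term analysis of \eqref{eq:mainA} finishes the argument. On triples $(Z,X,Y)$ drawn from distinct blocks the left-hand side vanishes (block-diagonal $A$ plus $\nabla$-invariance of the splitting), while the orthogonality $T_i\perp T_j$ kills most of the right-hand side. Taking $Z,X\in T_0$, $Y\in T_i$ with $i\geq 1$ forces $\lambda|_{T_i}=0$; then taking $Z,Y\in T_i$, $X\in T_0$ forces $\lambda|_{T_0}=0$; hence $\lambda\equiv 0$ on $U$, contradicting the $A|_U$ produced in the reduction. Two degenerate patterns need separate dispatch: if $k_0=0$ but $r\geq 2$, the first evaluation is replaced by $(Z,X)\in T_j\times T_j$, $Y\in T_i$ with $i\neq j$; and if $U$ is holonomy-irreducible ($r=1$, $k_0=0$), then $\bar A=\alpha\,\mathrm{Id}$ and direct substitution into \eqref{eq:mainA} in dimension $2n\geq 4$ immediately forces $\alpha$ to be constant, giving $D(g_{|U},J)=1$ in contradiction with $D(g_{|U},J)\geq D(g,J)\geq 3$ (the inequality using unique continuation for the closed prolonged system). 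The main obstacle is precisely this bookkeeping of the block-wise PDE evaluations together with the separate treatment of the degenerate de Rham patterns.
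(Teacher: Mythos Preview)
Your argument breaks at the passage ``Hence $\bar A$ commutes with the holonomy algebra at every point of $U$.'' From $\nabla^2 A=0$ and the Ricci identity you correctly obtain $[\bar A,R(W,Z)]=0$ pointwise, but this is commutation with the \emph{curvature} operators, not with the holonomy algebra. By Ambrose--Singer the holonomy algebra at $p$ is spanned by parallel translates $\tau_c^{-1}R_q(X,Y)\tau_c$ of curvature operators from other points, and since $\bar A$ is \emph{not parallel} you cannot transport the pointwise commutation along $c$: $\tau_c \bar A_p \tau_c^{-1}\neq \bar A_q$ in general. Consequently neither the block-diagonality of $\bar A$ with respect to the de~Rham splitting nor the Schur conclusion $\bar A|_{T_i}=\alpha_i\,\mathrm{Id}$ is justified. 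Once block-diagonality is lost, your ``cross-term analysis'' collapses: for $Z,Y\in T_i$, $X\in T_0$ the identity $(\nabla_Z A)(X,Y)=g(Z,Y)\lambda(X)+\omega(Z,Y)\lambda(JX)$ is precisely \eqref{eq:mainA} itself and carries no new information unless you can compute the left-hand side independently. (A minor additional omission: the flat case $r=0$, $T_0=TU$ is not covered by any of your case distinctions.)

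The paper's proof is not by contradiction but by direct construction, and it is short. Starting from the given non-parallel $A$ with $\lambda$ parallel and nonzero, one sets $c=g(g^{-1}\lambda,g^{-1}\lambda)$ (a nonzero constant in the Riemannian case), writes $\lambda=df$ and, on the simply connected $U$, $\lambda(J\cdot)=df'$; then the $1$-form $\sigma=Ag^{-1}\lambda-f\lambda+f'\lambda(J\cdot)$ satisfies $\nabla\sigma=c\,g$. The hermitian tensor $\tilde A=\sigma\otimes\sigma+\sigma(J\cdot)\otimes\sigma(J\cdot)$ is then checked to solve \eqref{eq:mainA} with $\tilde\lambda=c\,\sigma$ and $\tilde\mu=c^2\neq 0$. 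This bypasses holonomy considerations entirely.
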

\begin{proof}
Recall from Remark \ref{rem:affine} that if $\tilde{g}$ is c-projectively equivalent to $g$ but not affinely equivalent to it, the $1$-form $\lambda$ corresponding to $A=A(g,\tilde{g})\in \mathcal{A}(g,J)$ is not identically zero. Let us work with this solution $A$. The equations in \eqref{eq:hprosystem} show that for the corresponding $1$-form $\lambda$ we have
$$\nabla_X \lambda =\mu g(X,.)$$
for a certain constant $\mu$. 

Suppose $\mu=0$, i.e. $\nabla_X \lambda=0$. Consider the $1$-form $Ag^{-1}\lambda$ (where as always both $g^{-1}:T^{*}M\rightarrow TM$ and $A:TM\rightarrow T^{*}M$ are viewed as bundle morphisms). Calculating its covariant derivative using \eqref{eq:mainA}, we obtain
\begin{align}
\nabla_X(Ag^{-1}\lambda)=(\nabla_X A)g^{-1}\lambda=\lambda(g^{-1}\lambda)g(X,.)+\lambda(X)\lambda-\lambda(JX)\lambda(J.).\label{eq:oneformderivative}
\end{align}
Recall that $\lambda$ is the differential of a function, i.e. $\lambda=\nabla f$ for a certain function $f$. On the other hand, on the open neighborhood $U$ also the $1$-form $\lambda(J.)$ is the differential of a function $f':U\rightarrow \mathbb{R}$. This follows from the fact that $\lambda(J.)$ is parallel (and hence closed) and $U$ is simply connected. Let us set $c=\lambda(g^{-1}\lambda)$ (which is a non-zero constant) and define the $1$-form
$\sigma=Ag^{-1}\lambda-f\lambda+f'\lambda(J.).$
It follows from \eqref{eq:oneformderivative}, that
$$\nabla_X\sigma=c g(X,.).$$
On the other hand, it is straight-forward to check that the symmetric hermitian $(0,2)$-tensor
$$\tilde{A}=\sigma\otimes \sigma+ \sigma(J.)\otimes \sigma(J.)$$
satisfies \eqref{eq:mainA}. The corresponding $1$-form $\tilde{\lambda}$ is given by $c\sigma$ and satisfies
$\nabla_X \tilde{\lambda}=c^2 g(X,.).$
Thus $(\tilde{A},\tilde{\lambda},\tilde{\mu}=c^2)$ is the desired solution of \eqref{eq:hprosystem} with $B=0$ but $\tilde{\mu}\neq 0$.
\end{proof}

Now we are able to give the proof of Lemma \ref{lem:changeofmetric}. Let us suppose that $B=0$ and let $U$ be an open simply connected subset with compact closure. By Lemma \ref{lem:changeofmetric2}, we can find a solution $(A,\lambda,\mu)$ of \eqref{eq:hprosystem} on $U$ such that $\mu\neq 0$ and after rescaling we can suppose that $\mu=1$. 

For arbitrary real numbers $t$, we define the triple 
$$A(t)=t(\lambda\otimes\lambda+\lambda(J.)\otimes \lambda(J.))+g,\,\,\,\lambda(t)=t\lambda,\,\,\,\mu(t)=t\mu=t.$$
Obviously the triple $(A(t),\lambda(t),\mu(t))$ is a solution of \eqref{eq:hprosystem} for $g$ with $B=0$. Moreover, since $U$ has compact closure, we find $\epsilon>0$ such that for all $t\in (-\epsilon,\epsilon)$ the solution $A(t)$ is non-degenerate on $U$ and the metric $\tilde{g}_t$ in the c-projective class of $g$ which corresponds to $A(t)$ (that is, $\tilde{g}_t$ is defined by $A(g,\tilde{g}_t)=A(t)$ in equation \eqref{eq:defA}) is positively definite. 

Using Lemma \ref{lem:changeofmetric1}, we see that the constant $\tilde{B}_t$ in the system \eqref{eq:hprosystem} for $\tilde{g}_t$ is given by
$$\tilde{B}(t)=(\mathrm{det}A(t))^{\frac{1}{2}}(g(A(t)^{-1}\Lambda(t),\Lambda(t))-\mu(t))=(\mathrm{det}A(t))^{\frac{1}{2}}(t^2g(A(t)^{-1}\Lambda,\Lambda)-t),$$
where as usual $\Lambda=g^{-1}\lambda$. We want to show that $\tilde{B}(t)$ is non-zero for some $t_0\in (-\epsilon,\epsilon)$. Since $(\mathrm{det}A(t))^{\frac{1}{2}}$ is positive anyway it suffices to show that 
$$f(t)=t^2g(A(t)^{-1}\Lambda,\Lambda)-t$$
is non-zero for some $t_0\in (-\epsilon,\epsilon)$. Taking the derivative of $f$ when $t=0$, we obtain $\tfrac{d f}{dt}(0)=-1,$ thus, there is some $t_0\in (-\epsilon,\epsilon)$ such that $\tilde{B}(t_0)$ is non-zero. The metric $\tilde{g}_{t_0}$ has the properties as required in Lemma \ref{lem:changeofmetric}. This completes the proof of Lemma \ref{lem:changeofmetric}.

\begin{rem} Though we explicitly used in the proof of Lemma \ref{lem:changeofmetric2} 
that the metric $g$ is Riemannian, Lemma \ref{lem:changeofmetric2}  remains true for metrics of arbitrary signature (but the proof in  arbitrary signature is  longer and uses nontrivial results of \cite{EMN}). 
\end{rem}

\subsection{Conification of Einstein manifolds} \label{-4} 
Suppose $(M^{2n\ge 4}, g, J)$ is a Kähler-Einstein manifold of arbitrary signature. We assume that the symplectic form $\omega= g(.,J.)$ is exact so that we can consider the conification $(\hat M^{2n+2}, \hat g, \hat J)$ introduced in Section \ref{sec:construction}. Our goal is to show that the investigation of solutions of \eqref{eq:mainA} on $(M,g,J)$ reduces to the investigation of parallel tensors on the conification $(\hat{M},\hat{g},\hat{J})$ for a Ricci flat metric $\hat{g}$. We start with the following technical statement.  

\begin{lem}\label{lem:scalarB}
Let $(M,g,J)$ be a K\"ahler-Einstein manifold of real dimension $2n\geq 4$. Suppose there exists a solution $(A, \lambda, \mu) $ of
\eqref{eq:hprosystem} such that $\lambda\ne 0$. 
 Then, 
\begin{align}
B=-\frac{\mathrm{Scal}(g)}{4n(n+1)},\label{eq:scalarB}
\end{align}
where $\mathrm{Scal}(g)$ is the scalar curvature of $g$. 
\end{lem}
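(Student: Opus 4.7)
The plan is to apply the Weitzenböck formula to the closed one-form $\lambda$ and compute both of its sides using the three identities in \eqref{eq:hprosystem}; the Einstein hypothesis then extracts the stated value of $B$ from the Ricci term. Two preliminary contractions will be needed, both resting on $g^{ij}\omega_{ij}=0$ and on $J$ preserving $g$: tracing the first equation of \eqref{eq:hprosystem} in $X,Y$ gives $d\,\mathrm{trace}\,A = 4\lambda$ (so in particular $d\lambda=0$), and tracing the second equation gives the divergence $g^{ij}\nabla_{i}\lambda_{j} = 2n\mu + B\,\mathrm{trace}\,A$, hence $d^{*}\lambda = -(2n\mu + B\,\mathrm{trace}\,A)$.

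Next I would evaluate both sides of the Weitzenböck formula $\Delta_{H}\lambda = \nabla^{*}\nabla\lambda + \mathrm{Ric}(\Lambda,\cdot)$, with $\Lambda = g^{-1}\lambda$. Since $\lambda$ is closed, $\Delta_{H}\lambda = dd^{*}\lambda$; differentiating the expression for $d^{*}\lambda$ above, using $d\mu = 2B\lambda$ (the third equation of \eqref{eq:hprosystem}) and $d\,\mathrm{trace}\,A = 4\lambda$, yields
\[
dd^{*}\lambda = -4B(n+1)\lambda.
\]
For the rough Laplacian, one differentiates the second equation of \eqref{eq:hprosystem} once more and substitutes the first and third. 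The four resulting $g$- and $\omega$-contractions simplify via $g^{ij}\omega_{ij} = 0$ and $g^{ij}\omega_{ik}(J\lambda)_{j} = -\lambda_{k}$ (the latter coming from $J^{2}=-\mathrm{Id}$), giving $g^{ij}\nabla_{i}\nabla_{j}\lambda_{k} = 2B(n+1)\lambda_{k}$, i.e.\ $\nabla^{*}\nabla\lambda = -2B(n+1)\lambda$.

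Combining, the Weitzenböck formula becomes $\mathrm{Ric}(\Lambda,\cdot) = -2B(n+1)\lambda$. The Einstein condition $\mathrm{Ric} = \tfrac{\mathrm{Scal}(g)}{2n}g$ makes the left-hand side equal to $\tfrac{\mathrm{Scal}(g)}{2n}\lambda$; at any point where $\lambda \neq 0$ the two expressions must agree as one-forms, yielding $B = -\tfrac{\mathrm{Scal}(g)}{4n(n+1)}$ as claimed.

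The main care-requiring step is the bookkeeping of the four $g$- and $\omega$-contractions appearing in $g^{ij}\nabla_{i}A_{jk}$: I will need to verify that the $-B\lambda_{k}$ contributed by $g^{ij}\omega_{ik}(J\lambda)_{j}$ exactly cancels the $+B\lambda_{k}$ from $g^{ij}g_{ik}\lambda_{j}$, so that only $2Bn\lambda_{k}$ (from the traces of $g$ and of $\nabla\mu$) and $2B\lambda_{k}$ survive, combining to $2B(n+1)\lambda_{k}$. Without this cancellation the factor $(n+1)$ in the rough Laplacian would not match the factor $(n+1)$ in $dd^{*}\lambda$, and the formula for $B$ would involve a different dimensional constant.
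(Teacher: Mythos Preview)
Your argument is correct and arrives at the same key identity as the paper, namely $\mathrm{Ric}(\Lambda,\cdot)=-2B(n+1)\lambda$, after which the Einstein condition finishes things off. The route, however, is somewhat different. The paper does not pass through the Weitzenb\"ock formula: it sets $\sigma=\mathrm{trace}\,\nabla\Lambda$, obtains $\nabla\sigma=4B(n+1)\lambda$ from the second and third equations of \eqref{eq:hprosystem} (this is your computation of $dd^{*}\lambda$ in disguise), and then invokes the known fact that $J\Lambda$ is Killing, so that $\nabla_X\nabla\Lambda=-JR(X,J\Lambda)$; tracing this and using the K\"ahler identity $\mathrm{trace}(JR(X,JY))=2\,\mathrm{Ric}(X,Y)$ gives $\nabla\sigma=-2\,\mathrm{Ric}(\cdot,\Lambda)$ directly. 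In other words, the paper extracts the Ricci term via the Killing-field second-derivative identity, whereas you extract it by computing the rough Laplacian $\nabla^{*}\nabla\lambda$ from the system and subtracting. Your approach has the advantage of being entirely self-contained (no appeal to the separate fact that $J\Lambda$ is Killing), at the cost of the extra bookkeeping you flagged in the last paragraph; the paper's approach is a line shorter once the Killing property is granted. Both are perfectly valid.
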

\begin{proof}
Take  a solution $A$ of \eqref{eq:hprosystem} such that  $\lambda\ne 0$. As usual we denote  $\Lambda=g^{-1}\lambda$. It is known (see \cite[Proposition 3]{ApostolovI}, \cite[Corollary 3]{FKMR} or equation (13) and the sentence below in \cite{DomMik1978}) that  $J\Lambda$ is a Killing vector field which in particular implies that  $\lambda$ is non-zero in every point of an open and everywhere dense subset. We define the function $\sigma=\mathrm{trace}\nabla\Lambda$. From the second equation in \eqref{eq:hprosystem}, we see that $\sigma$ and $\mu$ are related by $\sigma=2n\mu+B\mathrm{trace}(A).$ Thus, taking the covariant derivative of this equation and inserting the third equation of \eqref{eq:hprosystem}, we obtain 
\begin{align}
\nabla\sigma=2n\nabla\mu+4B\lambda=4B(n+1)\lambda.\label{eq:musigma}
\end{align}
 
On the other hand, since $J\Lambda$ is Killing, we have the identity $\nabla_X\nabla\Lambda=-JR(X,J\Lambda)$. Using the symmetries of the curvature tensor $R$ of the K\"ahler metric $g$, we have $\mathrm{trace}(JR(X,JY))=2\mathrm{Ric}(g)(X,Y)$. Combining the last two equations yields $\nabla\sigma=-2\mathrm{Ric}(g)(.,\Lambda)$ and inserting this into \eqref{eq:musigma}, we have
$$-\mathrm{Ric}(g)(.,\Lambda)=2B(n+1)\lambda.$$
Since  $g$ is K\"ahler-Einstein, that is $\mathrm{Ric}(g)=\frac{\mathrm{Scal}(g)}{2n}g$, we evidently have \eqref{eq:scalarB}.
\end{proof}

\begin{lem} \label{lem:eins}  Let $(M,g,J)$ be a K\"ahler-Einstein manifold of real dimension $2n\geq 4$ and let  the symplectic form $\omega= g(., J.)$ be exact. Assume
 $ {\mathrm{Scal}(g)}= 4n(n+1)$. Then, the conification of 
 $(M,g,J)$  is Ricci flat. Moreover, if a conification of a certain K\"ahler-Einstein manifold $(M^{\ge 4},g,J)$  is Ricci-flat, then $g$ is Einstein with scalar  curvature  $ {\mathrm{Scal}(g)}= 4n(n+1)$. 
\end{lem}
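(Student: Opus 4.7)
The plan is to prove both implications by a single Ricci curvature computation which relates the Ricci tensor of $\hat g$ to that of $g$. I would split the reduction into two well-known steps: from the cone $\hat M = \mathbb{R}_{>0} \times P$ down to the Sasakian-type manifold $(P,h)$, and from there down to $(M,g,J)$ via the submersion $\pi : P \to M$.

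For the first step, I would use the standard formula for the Ricci tensor of a cone metric $\hat g = dr^2 + r^2 h$, derived immediately from the Levi-Civita connection formulas \eqref{eq:LCcone}: $\widehat{\mathrm{Ric}}(\xi, \cdot) = 0$, while for vector fields $X,Y$ tangent to $P$ one has $\widehat{\mathrm{Ric}}(X,Y) = \mathrm{Ric}^h(X,Y) - (\dim P - 1)\,h(X,Y)$. Since $\dim P = 2n+1$, this reduces the Ricci-flatness of $\hat g$ to the Einstein condition $\mathrm{Ric}^h = 2n\cdot h$ on $(P,h)$.

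For the second step, I would compute $\mathrm{Ric}^h$ directly from the formulas \eqref{eq:connsasaki} for $\hnabla$, working in the local orthonormal frame $\{e_i^\theta\}_{i=1}^{2n} \cup \{\eta\}$ on $P$ associated to an orthonormal frame $\{e_i\}$ on $M$. A straightforward computation of the Riemann tensor of $h$, using that $J$ is $\nabla$-parallel together with the identities $\omega(X,JY) = -g(X,Y)$ and $\sum_i g(J e_i, e_i) = 0$, yields
\begin{align*}
\mathrm{Ric}^h(X^\theta, Y^\theta) &= \mathrm{Ric}^g(X,Y) - 2\,g(X,Y),\\
\mathrm{Ric}^h(\eta, Y^\theta) &= 0,\\
\mathrm{Ric}^h(\eta, \eta) &= 2n.
\end{align*}

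Combining the two steps, the cone $\hat g$ is Ricci-flat if and only if $\mathrm{Ric}^g(X,Y) = 2(n+1)\,g(X,Y)$ for all $X,Y$, i.e. $g$ is K\"ahler-Einstein with scalar curvature $\mathrm{Scal}(g) = 2n\cdot 2(n+1) = 4n(n+1)$. Both implications of the lemma follow immediately. The main technical obstacle is the curvature computation in the intermediate step: one has to carefully track the Sasakian curvature correction terms $\omega(Y,Z)(JX)^\theta - \omega(X,Z)(JY)^\theta - 2\omega(X,Y)(JZ)^\theta$ appearing in $\hat R(X^\theta, Y^\theta)Z^\theta$ and check that their contraction with the frame $\{e_i^\theta\}$ exactly produces the $-2g$ shift and $+2n$ contribution from the vertical direction, since a sign error there would destroy the match with the value $4n(n+1)$ claimed in Lemma \ref{lem:scalarB}.
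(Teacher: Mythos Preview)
Your proposal is correct and follows essentially the same strategy as the paper: a direct Ricci computation linking $\widehat{\mathrm{Ric}}$ to $\mathrm{Ric}^g$ via the explicit connection formulas \eqref{eq:connsasaki} and \eqref{eq:LCcone}. The only organizational difference is that the paper collapses your two steps into one, computing the full curvature tensor of $\hat g$ directly on horizontal lifts (formulas \eqref{eq:curvcon0}--\eqref{eq:curvcon}, where the correction is packaged as $4H(X,Y)Z$ with $H$ the constant holomorphic sectional curvature tensor) and then contracting once to obtain $\mathrm{Ric}(\hat g)(X^\theta,Y^\theta)=r^2\bigl(\mathrm{Ric}(g)(X,Y)-2(n+1)g(X,Y)\bigr)$; your two-stage reduction through $\mathrm{Ric}^h$ yields the same identity and hence the same conclusion.
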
 
\begin{proof} 
By direct calculation, using the formulas \eqref{eq:LCconncone}, we obtain that the curvature tensor $\hat{R}$ of $(\hat{M},\hat{g},\hat{J})$ is given by the formulas
\begin{align}
\hat{R}(.,.)\xi=\hat{R}(.,.)J\xi=0,\label{eq:curvcon0}
\end{align}
where $\xi=r\partial_r$ is the cone vector field on $(\hat{M},\hat{g},\hat{J})$, and 
\begin{align}
\hat{R}(X^\theta,Y^\theta)Z^\theta=(R(X,Y)Z-4H(X,Y)Z)^\theta,\label{eq:curvcon}
\end{align}
where $R$ denotes the curvature tensor of $(M,g,J)$,
$$H(X,Y)Z=\frac{1}{4}(g(Z,Y)X-g(Z,X)Y+\omega(Z,Y)JX-\omega(Z,X)JY+2\omega(X,Y)JZ)$$
is the algebraic curvature tensor of constant holomorphic  curvature equal to one and $X^\theta$ denotes the horizontal lift of tangent vectors $X\in TM$ to the distribution $\mathcal{H}=\mathrm{span}\{\xi,J\xi\}^\perp\subseteq T\hat{M}$ (see also Section \ref{sec:construction} for the notation). Having these formulas, Lemma \ref{lem:eins} follows from simple linear algebra:

From \eqref{eq:curvcon0} it is clear that $\mathrm{Ric}(\hat{g})(\xi,.)=\mathrm{Ric}(\hat{g})(J\xi,.)=0$, where $\xi$ denotes the cone vector field on $\hat{M}$. A straight-forward calculation using \eqref{eq:curvcon} yields
$$\mathrm{Ric}(\hat{g})(X^\theta,Y^\theta)=r^2\left(\mathrm{Ric}(g)(X,Y)-2(n+1)g(X,Y)\right).$$
implying  that if  $ {\mathrm{Scal}(g)}= 4n(n+1)$ then 
$$\mathrm{Ric}(\hat{g})(X^\theta,Y^\theta)=r^2\left(\mathrm{Ric}(g)(X,Y)-\frac{\mathrm{Scal}(g)}{2n}g(X,Y)\right)=0 $$
so $\hat{g}$ is Ricci flat,  and if $\hat{g}$ is Ricci flat then $g$ is Einstein with $ {\mathrm{Scal}(g)}= 4n(n+1)$.   
\end{proof}

\begin{lem}\label{lem:einsteincproeinstein}
Let $(M,g,J)$ be a K\"ahler-Einstein manifold of real dimension $2n\geq 4$.
Suppose there exists a solution $(A, \lambda, \mu) $ of
\eqref{eq:hprosystem} such that $\lambda\ne 0$. 
 Then, every metric $\tilde{g}$, c-projectively equivalent to $g$, is also K\"ahler-Einstein.
\end{lem}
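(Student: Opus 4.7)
The strategy is to pass to the conification, where c-projective equivalence downstairs becomes affine equivalence upstairs, and then to combine the Ricci-flatness criterion of Lemma \ref{lem:eins} with the fact that affinely equivalent metrics share the same Ricci tensor. By Lemma \ref{lem:scalarB}, the Einstein assumption on $g$ together with the existence of a solution $(A,\lambda,\mu)$ of \eqref{eq:hprosystem} with $\lambda\not\equiv 0$ forces $B=-\mathrm{Scal}(g)/(4n(n+1))$. Assume first that $B\neq 0$. Rescaling $g$ by the constant $\mathrm{Scal}(g)/(4n(n+1))$ leaves the c-projective class, the Einstein property, and the hypothesis unchanged (possibly at the cost of flipping the signature), so I may assume $\mathrm{Scal}(g)=4n(n+1)$ and $B=-1$. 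Being K\"ahler-Einstein is a pointwise condition, so I verify the claim on any open subset $U\subseteq M$ diffeomorphic to a ball on which $\omega=g(\cdot,J\cdot)$ is exact. By Theorem \ref{thm:coneconstruction1} and Lemma \ref{lem:eins}, the conification $(\hat U,\hat g,\hat J)$ is then a Ricci-flat K\"ahler manifold.

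Given any metric $\tilde g$ c-projectively equivalent to $g$ on $U$, set $\tilde A=A(g,\tilde g)\in\mathcal A(g,J)$ via \eqref{eq:defA}. By Theorem \ref{thm:hprosystem} (or, in the borderline case $D(g,J)=2$, by the linearity of \eqref{eq:hprosystem} applied to $\tilde A=\alpha g+\beta A$), the tensor $\tilde A$ extends to a triple $(\tilde A,\tilde\lambda,\tilde\mu)$ solving \eqref{eq:hprosystem} with the same constant $B=-1$. Theorem \ref{thm:coneconstruction2} then produces a non-degenerate parallel hermitian symmetric $(0,2)$-tensor $\hat{\tilde A}$ on $(\hat U,\hat g,\hat J)$. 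Using $\hat{\tilde A}$ and the cone-level analogue of formula \eqref{eq:inverse}, I define a K\"ahler metric $\hat{\tilde g}$ on $\hat U$ in the complex structure $\hat J$; since $\hat{\tilde A}$ is $\hat\nabla$-parallel, $\hat{\tilde g}$ is affinely equivalent to $\hat g$ and hence Ricci-flat. After a constant rescaling, $\hat{\tilde g}$ should coincide with the conification of $\tilde g$ taken with respect to $J$ and a suitable potential of $\tilde\omega=\tilde g(\cdot,J\cdot)$; applying Lemma \ref{lem:eins} in the converse direction to this conification then yields that $\tilde g$ is K\"ahler-Einstein.

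The remaining case $B=0$, where $g$ is Ricci-flat, is handled by first applying Lemma \ref{lem:changeofmetric} (or its arbitrary-signature analogue) on a ball with compact closure to produce a c-projectively equivalent K\"ahler metric $g'$ with $B'\neq 0$; the explicit perturbation used in its proof, combined with the fact that $g$ is Einstein with zero scalar curvature, can be checked to preserve the K\"ahler-Einstein property. The preceding paragraphs then apply to $g'$ and yield that every metric c-projectively equivalent to $g'$, hence every metric c-projectively equivalent to $g$, is K\"ahler-Einstein. The main obstacle is the identification of the Ricci-flat K\"ahler metric $\hat{\tilde g}$ on the cone with the conification of $\tilde g$: one must produce on $\hat{\tilde g}$ a cone vector field whose K\"ahler quotient in the sense of Lemma \ref{lem:coneconstruction1b} reproduces $(U,\tilde g,J)$, which requires selecting the potential for $\tilde\omega$ compatibly and a direct computation relating the cone structure of $\hat g$ to that induced by $\hat{\tilde A}$.
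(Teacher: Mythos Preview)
Your route is genuinely different from the paper's. The paper argues directly on $M$: from the second equation of \eqref{eq:hprosystem} and the relation \eqref{eq:relationphilambda} one derives
\[
Bg-\tilde B\tilde g=-\nabla\Phi+\Phi\otimes\Phi-\Phi(J\cdot)\otimes\Phi(J\cdot),
\]
and combining this with the classical Ricci transformation rule \eqref{eq:riccitafo} yields $\mathrm{Ric}(\tilde g)+2(n+1)\tilde B\tilde g=\mathrm{Ric}(g)+2(n+1)Bg$, whose right-hand side vanishes by Lemma \ref{lem:scalarB}. No conification, no case split on $B$.

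Your conification argument has two gaps that are not merely cosmetic. First, the step you yourself flag as the ``main obstacle'' is left open: you need that the metric $\hat{\tilde g}$ built from the parallel tensor $\hat{\tilde A}$ via the cone-level analogue of \eqref{eq:inverse} is, after a constant rescaling and a change of radial coordinate, the conification of a rescaling of $\tilde g$. Although the connections of $\hat g$ and $\hat{\tilde g}$ agree (so $\hat\nabla\xi=\mathrm{Id}$ persists), you still have to exhibit the cone decomposition of $\hat{\tilde g}$ explicitly and identify its K\"ahler quotient with $\tilde g$; this is a concrete computation with the formula \eqref{eq:Acccone} that you have not carried out, and Lemma \ref{lem:eins} cannot be invoked until it is done. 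Second, your handling of $B=0$ is circular. Lemma \ref{lem:changeofmetric} only gives you a c-projectively equivalent metric $g'$ with $B'\neq0$; it says nothing about $g'$ being Einstein. Your claim that the explicit perturbation ``can be checked to preserve the K\"ahler--Einstein property'' is precisely an instance of the lemma you are proving (for the particular target metric $g'$), so you would need an independent verification --- which amounts to doing the paper's direct Ricci computation anyway, at least for that one metric.
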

\begin{proof}  
Let $A=A(g,\tilde{g})$ be the solution of \eqref{eq:mainA} given by \eqref{eq:defA}. In the second equation $\nabla\lambda=\mu g+B A$ of \eqref{eq:hprosystem}, we express $\lambda$ in terms of the $1$-form $\Phi$ by using the relation \eqref{eq:relationphilambda}. A straight-forward calculation yields
$$(\mu-g(A^{-1}\Lambda,\Lambda))gA^{-1}+Bg=-\nabla \Phi+\Phi\otimes\Phi-\Phi(J.)\otimes\Phi(J.),$$
where $A$ is equivalently viewed as a $(1,1)$-tensor and $\Lambda=g^{-1}\lambda$ as usual. Using $\tilde{g}=(\mathrm{det}\,A)^{-\frac{1}{2}}gA^{-1}$ and the transformation rule \eqref{eq:trafoB} for $B$, we can rewrite this into the form 
\begin{align}
Bg-\tilde{B}\tilde{g}=-\nabla \Phi+\Phi\otimes\Phi-\Phi(J.)\otimes\Phi(J.),\label{eq:kaehlereinstein}
\end{align}

On the other hand, using the transformation rule \eqref{eq:lchpro} for the Levi-Civita connections of the two metrics, it is straight-forward to show and well-known (see \cite[equation (1.11)]{Tashiro1956}) that the Ricci-tensors corresponding to $g$ and $\tilde{g}$ are related by
\begin{align}
\mathrm{Ric}(\tilde{g})=\mathrm{Ric}(g)-2(n+1)(\nabla\Phi-\Phi\otimes\Phi+\Phi(J.)\otimes\Phi(J.)).\label{eq:riccitafo}
\end{align}

Combining this equation with \eqref{eq:kaehlereinstein}, we obtain
$$\mathrm{Ric}(\tilde{g})+2(n+1)\tilde{B}\tilde{g}=\mathrm{Ric}(g)+2(n+1)Bg.$$
Since the assumptions of Lemma \ref{lem:scalarB} are satisfied and $g$ is Einstein, we see from formula \eqref{eq:scalarB} that the right-hand side of the last equation is vanishing identically. Then, $\tilde{g}$ is Einstein.
\end{proof}

Combining Lemma \ref{lem:changeofmetric}  with Lemmas \ref{lem:scalarB} and \ref{lem:einsteincproeinstein}, we obtain 

\begin{cor}\label{cor:changeofmetriceinstein}
Let $(M,g,J)$ be a  Riemannian K\"ahler-Einstein  manifold of real dimension $2n\geq 4$ and of degree of mobility $D(g,J)\geq 3$.
 Suppose  there exists a solution $(A, \lambda, \mu) $ of \eqref{eq:hprosystem} such that $\lambda\ne 0$.

Then, on each open simply connected subset $U\subseteq M$ with compact closure, the degree of mobility $D(g_{|U},J_{|U})$ is equal to the dimension of the space of solutions of \eqref{eq:hprosystem} with $B= -\frac{\mathrm{Scal}(\tilde{g})}{4n(n+1)}= -1$ for a certain positively or negatively definite K\"ahler-Einstein metric $\tilde{g}$ on $U$ that is c-projectively equivalent to $g$. 
\end{cor}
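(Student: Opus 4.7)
The plan is to run the argument for Corollary \ref{cor:changeofmetric} while keeping track of the Einstein condition at every step. All of the substantive work has already been packaged into Lemmas \ref{lem:changeofmetric}, \ref{lem:scalarB}, and \ref{lem:einsteincproeinstein}; the corollary should follow essentially by stitching these lemmas together in the correct order.

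First I would produce an intermediate c-projectively equivalent metric $g'$ on $U$ whose associated constant in \eqref{eq:hprosystem} is non-zero. Let $B$ be the constant from Theorem \ref{thm:hprosystem} corresponding to $g$. If $B\neq 0$, simply take $g'=g$ and $B'=B$. If $B=0$, the hypothesis provides a solution with $\lambda\neq 0$, so Lemma \ref{lem:changeofmetric} applies and yields a Riemannian K\"ahler metric $g'$ on $U$, c-projectively equivalent to $g$, with associated constant $B'\neq 0$. In this case Lemma \ref{lem:einsteincproeinstein}, applied to $(M,g,J)$ together with the same non-trivial solution, guarantees that every metric c-projectively equivalent to $g$ is K\"ahler-Einstein, so in particular $g'$ is. In both cases $g'$ is Riemannian K\"ahler-Einstein, c-projectively equivalent to $g$, and has $B'\neq 0$.

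Next I would rescale by a constant to normalize $B'$ to $-1$. Set $\tilde g := -B' g'$. Because constant rescaling preserves the Levi-Civita connection, the Ricci tensor, the Einstein condition, and the c-projective class, the metric $\tilde g$ is K\"ahler-Einstein and c-projectively equivalent to $g$; its signature is determined by the sign of $-B'$ (definite of the same sign as $g'$ if $B'<0$, of the opposite sign if $B'>0$). A direct substitution in \eqref{eq:hprosystem} of the type used at the end of the proof of Corollary \ref{cor:changeofmetric} identifies the solution space for $g'$ with constant $B'$ with the solution space for $\tilde g$ with constant $\tilde B=-1$; the identity $\tilde B=-\mathrm{Scal}(\tilde g)/(4n(n+1))=-1$ is then just Lemma \ref{lem:scalarB} applied to $\tilde g$, whose non-triviality hypothesis is inherited from $g$ via the isomorphism between solution spaces. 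Finally, Remark \ref{rem:isomorphism} gives $D(g_{|U},J_{|U})=D(\tilde g,J)$, and Theorem \ref{thm:hprosystem} identifies this with the dimension of the space of solutions of \eqref{eq:hprosystem} for $\tilde g$ with constant $\tilde B=-1$.

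The only step that goes beyond the proof of Corollary \ref{cor:changeofmetric} is the preservation of the Einstein property when replacing $g$ by $g'$ via Lemma \ref{lem:changeofmetric} in the $B=0$ case, which is precisely what Lemma \ref{lem:einsteincproeinstein} is designed to supply; no new analytic obstacle arises. The one thing to check carefully is that the non-triviality hypothesis $\lambda\neq 0$ is propagated through Lemma \ref{lem:changeofmetric} so that Lemmas \ref{lem:einsteincproeinstein} and \ref{lem:scalarB} can be invoked for both $g'$ and $\tilde g$, but this is immediate because the new solution constructed in the proof of Lemma \ref{lem:changeofmetric} is explicitly non-affine.
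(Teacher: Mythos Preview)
Your proposal is correct and follows exactly the route the paper takes: the paper simply states that the corollary is obtained by combining Lemma \ref{lem:changeofmetric} with Lemmas \ref{lem:scalarB} and \ref{lem:einsteincproeinstein}, and your write-up is a careful unpacking of precisely that combination, including the rescaling step borrowed from the proof of Corollary \ref{cor:changeofmetric}.
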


It follows from Corollary \ref{cor:changeofmetriceinstein}, Theorem \ref{thm:coneconstruction2} and Lemma \ref{lem:eins} that (at least in the local setting) we reduced the study of the degrees of mobility of $2n$-dimensional Kähler-Einstein Riemannian metrics to the study of the possible dimensions of the space of parallel hermitian symmetric $(0,2)$-tensors of Ricci-flat cone K\"ahler manifolds $(\hat{M},\hat{g},\hat{J})$ of dimension $2(n+1)$ where $\hat{g}$ is positively definite or has signature $(2,2n)$.

In the proof of  Theorem \ref{thm:degreeeinstein}  we will  need one more observation.
\begin{lem}\label{lem:ricciflatcone}
Let $(\hat{M}=\mathbb{R}_{>0}\times P,\hat{g}=dr^2+r^2 h,\hat{J})$ be a K\"ahler manifold which is the cone over an $(2n+1)$-dimensional pseudo-Riemannian manifold $(P,h)$. 

\begin{enumerate}
\item If $\mathrm{dim}\,\hat{M}<6$ and $\hat{g}$ is Ricci flat, then $\hat{g}$ is flat.  
\item Let $\hat{g}$ have signature $(2,2n)$. If $\mathrm{dim}\,\hat{M}<8$, $\hat{g}$ is Ricci flat and $X$ is a non-zero parallel null-vector field on $\hat{M}$, then $\hat{g}$ is flat.  
\end{enumerate}
\end{lem}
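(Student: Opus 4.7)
The plan is to combine the curvature formulas for K\"ahler cones derived in the proof of Lemma \ref{lem:eins} with dimension-counting. By Lemma \ref{lem:coneconstruction1b}, $\hat M$ is locally the conification of a K\"ahler quotient $(M, g, J)$, and the derivation in the proof of Lemma \ref{lem:eins} (which uses only \eqref{eq:LCconncone} and hence is valid for any $n \geq 1$) yields $\hat R(\cdot,\cdot)\xi = \hat R(\cdot,\cdot)\hat J\xi = 0$ together with
\begin{equation*}
\hat R(X^\theta, Y^\theta)Z^\theta \;=\; \bigl(R(X,Y)Z - 4 H(X,Y)Z\bigr)^\theta,
\end{equation*}
where $R$ is the curvature of $(M, g)$ and $H$ is the algebraic curvature tensor of constant holomorphic sectional curvature $1$. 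The same derivation gives $\mathrm{Ric}(\hat g)(X^\theta, Y^\theta) = r^2(\mathrm{Ric}(g)(X, Y) - 2(n+1) g(X, Y))$ and $\mathrm{Ric}(\hat g)(\xi, \cdot) = \mathrm{Ric}(\hat g)(\hat J \xi, \cdot) = 0$. Consequently, Ricci-flatness of $\hat g$ is equivalent to $\mathrm{Ric}(g) = 2(n+1) g$, while flatness of $\hat g$ is equivalent to $R = 4 H$ on the base.

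For part (1), the K\"ahler cone $\hat M$ has even real dimension at least $2$, so $\dim \hat M \in \{2, 4\}$. The case $\dim \hat M = 2$ is trivial since in real dimension $2$ Ricci-flatness means vanishing scalar curvature, hence flatness. In the case $\dim \hat M = 4$, Ricci-flatness gives $\mathrm{Ric}(g) = 4 g$ on the $2$-dimensional base, where the Ricci tensor is automatically of the form $K \cdot g$ for $K$ the Gaussian curvature; hence $K \equiv 4$. This is equivalent to $R = 4 H$, and the formula above yields $\hat R \equiv 0$.

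For part (2), $\dim \hat M \in \{4, 6\}$. The case $\dim \hat M = 4$ is covered by part (1) (which makes no signature hypothesis), so the only case remaining is $\dim \hat M = 6$, signature $(2, 4)$, $n = 2$. Here $(M, g, J)$ is a $4$-dimensional K\"ahler-Einstein manifold with $\mathrm{Ric}(g) = 6 g$, and the goal is to prove $R = 4 H$. Since $\hat J$ is parallel, $\hat J X$ is also parallel and null; furthermore $\hat g(X, \hat J X) = 0$ by antisymmetry of the K\"ahler form, so $\mathcal D := \mathrm{span}\{X, \hat J X\}$ is a parallel, $\hat J$-invariant, totally isotropic $2$-plane distribution on $\hat M$, which is maximally isotropic for signature $(2, 4)$. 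The plan is to decompose $X$ with respect to the cone splitting $T\hat M = \mathbb R \xi \oplus \mathbb R \eta \oplus \mathcal H$ (where $\eta = \hat J \xi$) and use $\hat\nabla \xi = \mathrm{Id}$ together with the Levi-Civita formulas \eqref{eq:LCconncone} to write $\hat\nabla X = 0$ as a coupled system of first-order equations for the components of $X$. Combined with the curvature identities $\hat R(\cdot,\cdot) X = \hat R(\cdot,\cdot) \hat J X = 0$ and the Einstein condition $\mathrm{Ric}(g) = 6g$, this system must force $R = 4 H$ pointwise on $M$; the formula above then gives $\hat R \equiv 0$.

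The main obstacle is the dimension-$6$ step of part (2). Unlike in part (1), the Ricci-flat and cone assumptions together with the dimension count do not suffice---one has to exploit genuinely the existence of the parallel maximally isotropic plane $\mathcal D$. The delicate point is that the parallelism equations for $X$ and $\hat J X$ become transport equations along the cone which couple non-trivially with the base curvature tensor, and one must show that these, taken together with the Einstein condition, rigidly pin down $R = 4 H$ pointwise, rather than merely a weaker consequence such as the vanishing of a Weyl-type component or of the trace.
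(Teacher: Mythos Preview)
Your treatment of part (1) is correct, though you take a slightly longer route than the paper: rather than passing to the K\"ahler quotient $(M,g,J)$ via Lemma \ref{lem:coneconstruction1b}, the paper simply uses the cone curvature formula over $(P,h)$ directly (equation \eqref{eq:curvaturecone}), observes that Ricci-flatness of $\hat g$ forces $h$ to be Einstein with the right scalar curvature, and then that $\dim P<4$ forces $h$ to have constant sectional curvature $1$. Both arguments are sound; yours has the extra step of invoking the conification inversion.

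For part (2), however, there is a genuine gap. You propose to write out the parallelism of $X$ as a coupled PDE system along the cone and then argue that this, together with the Einstein condition on the base, forces $R=4H$ pointwise. You do not carry this out, and your closing paragraph signals that you are not sure it will actually go through. In fact there is a much more direct route that avoids the PDE analysis entirely. You already noted $\hat R(\cdot,\cdot)X=0$ and $\hat R(\cdot,\cdot)\hat JX=0$; combine these with $\hat R(\cdot,\cdot)\xi=0$ and $\hat R(\cdot,\cdot)\hat J\xi=0$ (from the cone structure). At a generic point, one checks that $\hat g(X,\xi)\neq 0$ (using $\hat\nabla\xi=\Id$), so that $\{X,\hat JX,\xi,\hat J\xi\}$ span a $4$-dimensional $\hat J$-invariant subspace of the $6$-dimensional $T_p\hat M$ on which every curvature endomorphism $\hat R(u,v)$ vanishes. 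The paper then completes a basis to $X,Y,Z,\hat JX,\hat JY,\hat JZ$ adapted to $\hat g$ and writes out the general form of a $\hat g$-skew, $\hat J$-commuting endomorphism vanishing on the first four vectors: only a single real parameter survives, and $\mathrm{trace}(\hat J\hat R)=0$ (Ricci-flatness) kills it. This is a purely pointwise linear-algebra argument on $T_p\hat M$; no transport equations for the components of $X$ are needed, and the base curvature tensor $R$ never has to be analysed directly.
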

\begin{proof}
(1) Using \eqref{eq:LCcone} it is straight-forward to calculate that the curvature tensor $\hat{R}$ of $\hat{g}$ is given by the formulas
\begin{align}
\hat{R}(.,.)\partial_r=0\mbox{ and }\hat{R}(X,Y)Z=R(h)(X,Y)Z-(h(Z,Y)X-h(Z,X)Y),\label{eq:curvaturecone}
\end{align}
where $X,Y,Z\in TP$ and $R(h)$ is the curvature tensor of $h$. Then, calculating the Ricci tensor of $\hat{g}$, it is straight-forward to see that if $\hat{g}$ is Ricci flat, $h$ is Einstein with scalar curvature $\mathrm{Scal}(h)=(\mathrm{dim}\,P)(\mathrm{dim}\,P-1)$. If in addition $\mathrm{dim}\,\hat{M}<6$, we have that $\mathrm{dim}\,P<4$ and therefore $h$ has constant  curvature equal to $1$. Inserting this back into \eqref{eq:curvaturecone}, we obtain that $\hat{R}=0$ as we claimed.

(2) Let $\xi=r\partial_r$ denote the cone vector field on $\hat{M}$ and let $X$ be the parallel non-zero null vector field. Since $\hat{\nabla}\xi=\Id$, we obtain that $\hat{g}(X,\xi)\neq 0$ on a dense and open subset of $\hat{M}$. Indeed, if $\hat{g}(X,\xi)=0$ on an open subset $U$ we can take the covariant derivative of this equation in the direction of $Y\in T\hat{M}$ to obtain that $\hat{g}(X,Y)=0$ in every point $p\in U$ for all $Y\in T_p \hat{M}$. This implies $X=0$ on $\hat{M}$ which contradicts our assumption. By similar arguments one also obtains that $\hat{g}(JX,\xi)\neq 0$ on a dense and open subset of $\hat{M}$. Let us work in a point $p\in \hat{M}$ such that $\hat{g}(X,\xi)\neq 0$ and $\hat{g}(JX,\xi)\neq 0$ at $p$. We suppose that $\mathrm{dim}\,\hat{M}=6$ and show that $\hat{g}$ is flat. 

It is an easy exercise to show that there exist a basis $X,Y,Z,JX,JY,JZ$ of $T_p \hat{M}$ in which $\hat{g}$ takes the form 
\begin{align}
\hat{g}=\left(\begin{array}{cccccc}0&1&0&0&0&0\\1&0&0&0&0&0\\0&0&1&0&0&0\\0&0&0&0&1&0\\0&0&0&1&0&0\\0&0&0&0&0&1\end{array}\right)\nonumber
\end{align}
and such that $\mathrm{span}\{X,Y,\hat{J}X,\hat{J}Y\}=\mathrm{span}\{X,\xi,\hat{J}X,\hat{J}\xi\}$. Hence any endomorphism of $T_p \hat{M}$ that commutes with $\hat{J}$ and vanishes on $\xi$ and $X$ has to vanish on $Y$ as well. This holds in particular for the curvature endomorphisms $\hat{R}=\hat{R}(u,v):T_p\hat{M}\rightarrow T_p\hat{M}$ for every pair of vectors $u,v\in T_p \hat{M}$. Since $\hat{R}$ commutes with $\hat{J}$ and is skew-symmetric with respect to $\hat{g}$ it takes the form
\begin{align}
\hat{R}=\left(\begin{array}{cccccc}a&0&b&-A&-B&-C\\0&-a&c&-D&-A&-E\\-c&-b&0&-E&-C&-F\\A&B&C&a&0&b\\D&A&E&0&-a&c\\E&C&F&-c&-b&0\end{array}\right).\nonumber
\end{align}
in the basis $X,Y,Z,JX,JY,JZ$ from above. Since $\hat{R}$ vanishes on $X,Y,JX,JY$ it implies that $a=b=c=A=B=C=D=E=0$. Furthermore, using the condition that $\hat{g}$ is Ricci flat, i.e., $\mathrm{trace}(\hat{J}\hat{R})=0$ yields $F=0$. Thus the curvature tensor $\hat{R}$ is vanishing in every point of a dense and open subset, implying that $\hat{g}$ is flat as we claimed.

\weg{Let $V=T_p \hat{M}$ and denote by $V^\mathbb{C}=V\otimes_{\mathbb{R}}\mathbb{C}$ its complexification which decomposes as 
$$V^{\mathbb{C}}=V^{1,0}\oplus V^{0,1},$$
where $V^{1,0},V^{0,1}$ denote the eigenspaces of $J$ corresponding to the eigenvalues $i$ and $-i$ respectively. With respect to this decomposition of $V^\mathbb{C}$ a vector $u\in V^\mathbb{C}$ decomposes as $u=u^{1,0}+u^{0,1}$, where $u^{1,0}=\tfrac{1}{2}(u-iJu)$ and $u^{0,1}=\tfrac{1}{2}(u+iJu)$. Consider the hermitian inner product
$$\langle u+iv,w+iz\rangle=g(u+iv,w-iz)=g(u,w)+g(v,z)+i(g(v,w)-g(u,z))$$
on $V^\mathbb{C}$ with respect to which $V^{1,0}$ and $V^{0,1}$ are orthogonal to each other and which has signature $(1,2)$ when restricted to $V^{1,0}$. Note that we have $\langle X^{1,0},X^{1,0}\rangle =0$ and $\langle X^{1,0},\xi^{1,0}\rangle \neq 0$. By rescaling $X$, we can achieve that $\langle X^{1,0},\xi^{1,0}\rangle = 1$. Introducing a new vector $Y^{1,0}=\xi^{1,0}+\alpha X^{1,0}$ and choosing $\alpha$ appropriately (in particular, we can choose $\alpha$ to be real), we obtain $\langle X^{1,0},Y^{1,0}\rangle = 1$ and $\langle Y^{1,0},Y^{1,0}\rangle = 0$. Let $Z^{1,0}$ be a unit vector orthogonal to $X^{1,0}$ and $Y^{1,0}$. Since without loss of generality, we can assume that $Y,Z$ are real vectors, we have found a basis $X,Y,Z,JX,JY,JZ$ of $T_p M$}

\end{proof}

\section{Proof of Theorems \ref{thm:degree} and \ref{thm:degreeeinstein}}
\label{sec:proofs}

\subsection{Proof of the first statement  of the Theorems \ref{thm:degree} and \ref{thm:degreeeinstein} in the local situation} 
\label{sec:proofthmdegree}
Let $(M,g,J)$ be a K\"ahler manifold of real dimension $2n\geq 4$. Our goal is to show that for every open simply connected subset $U\subseteq M$ with compact closure and the property that the K\"ahler form $\omega$ is exact on $U$, the degree of mobility $D(g_{|U},J_{|U})$ is given by one of the values either in the list of Theorem \ref{thm:degree} for a general metric or in the list of Theorem \ref{thm:degreeeinstein} under the additional assumption that the metric is Einstein. We will prove this simultaneously. 

By Corollary \ref{cor:changeofmetric} and Theorem \ref{thm:coneconstruction2}, the number $D(g_{|U},J_{|U})$ is precisely the dimension of the space of parallel hermitian symmetric $(0,2)$-tensors on the conification $(\hat{U},\hat{\tilde{g}},\hat{J})$ of $(U,\tilde{g}_{|U},J_{|U})$, where $\tilde{g}$ is a certain metric on $U$ that is c-projectively equivalent to $g$. Moreover, since $\tilde{g}$ is either positively or negatively definite, the metric $\hat{\tilde{g}}$ will be either positively definite or has signature $(2,2n)$. We also know  in view of Lemma \ref{lem:einsteincproeinstein}
that if the metric $g$ is Einstein,  then the  metric $ {\tilde{g}}$ is also Einstein so  the metric $\hat{\tilde{g}}$ is Ricci-flat. 

To avoid cumbersome notations, we will drop the ``hat'' and the ``tilde'' in the notation for the conification. The local version of the Theorems \ref{thm:degree} and \ref{thm:degreeeinstein} that we are going to prove in this  section reads

\begin{thm}\label{thm:degreelocal}
Let $(M,g,J)$ be a simply connected K\"ahler manifold of real dimension $2n+2\geq 6$ which is a cone over a $(2n+1)$ dimensional manifold. Further, let $g$ be either positively definite or have signature $(2,2n)$. Then, the dimension $D$ of the space of parallel hermitian symmetric $(0,2)$-tensors is given by one of the values in the list of Theorem \ref{thm:degree}.  Moreover, if the metric $g$ is Ricci-flat, then the dimension $D$ of the space of parallel hermitian symmetric $(0,2)$-tensors is given by one of the values in the list of Theorem \ref{thm:degreeeinstein}.
\end{thm}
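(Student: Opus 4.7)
The plan is to classify parallel hermitian symmetric $(0,2)$-tensors on $(M,g,J)$ by combining a holonomy decomposition at a point with the fact that the cone structure descends to every factor. First, I would fix a point $p\in M$ and apply the de Rham theorem (in the positive definite case) or \cite[Theorem 5]{FedMat} (in the signature $(2,2n)$ case, which is tailored to cone manifolds) to obtain an orthogonal, $J$-invariant splitting
\[
T_p M = T_0 \oplus T_1 \oplus \cdots \oplus T_\ell,
\]
with $T_0$ the maximal holonomy-trivial subspace and each $T_i$ for $i\geq 1$ irreducible. Locally this corresponds to a product decomposition $M = M_0 \times M_1 \times \cdots \times M_\ell$, and writing $\dim_\R T_i = 2k_i$ we have $\sum_i k_i = n+1$.

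Next, by the holonomy principle together with Schur's lemma, as recalled in~\eqref{presentation}, parallel hermitian symmetric $(0,2)$-tensors correspond to tensors of the form
\[
\sum_{a,b=1}^{2k_0} c_{ab}\,\tau_a \otimes \tau_b \ + \ \sum_{i=1}^\ell C_i\, g|_{T_i},
\]
where $\tau_a$ is a basis of $T_0^*$, $(c_{ab})$ is an arbitrary symmetric matrix that is hermitian with respect to $J|_{T_0}$, and $C_i\in\R$. The space of such $(c_{ab})$ has real dimension $k_0^2$, hence $D = k_0^2 + \ell$.

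The crux of the argument is a lower bound on each $k_i$ for $i\geq 1$. The key geometric input is that the cone structure passes to each factor: writing the cone vector field $\xi$ as $\xi = \sum \xi_i$ with $\xi_i\in TM_i$, the identity $\nabla \xi = \Id$ splits into $\nabla^{(i)}\xi_i = \Id_{TM_i}$ on each factor, which exhibits each $M_i$ as a cone. Being $J$-invariant (hence of even real dimension) and non-flat (by irreducibility of the holonomy on $T_i$), each such $M_i$ must have real dimension at least $4$, so $k_i\geq 2$. In the Einstein case Ricci-flatness descends to each factor, and Lemma~\ref{lem:ricciflatcone}(1) excludes Ricci-flat, non-flat cones of real dimension $<6$, forcing $k_i\geq 3$. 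Combined with $\sum_i k_i = n+1$, these bounds become
\[
\ell \leq \bigl[\tfrac{n+1-k_0}{2}\bigr] \qquad \text{resp.} \qquad \ell \leq \bigl[\tfrac{n+1-k_0}{3}\bigr],
\]
with $k_0 \leq n-1$ (resp.\ $k_0\leq n-2$) whenever $\ell\geq 1$; these match the lists of Theorems~\ref{thm:degree} and~\ref{thm:degreeeinstein}. The extremal case $\ell=0$ forces $(M,g)$ to be flat and produces $D=(n+1)^2$.

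The hard step will be the signature $(2,2n)$ case, because the decomposition from \cite[Theorem 5]{FedMat} may contain an additional two-dimensional degenerate summand carrying a parallel null vector, which effectively inflates the flat parameter from $k_0$ to $k_0+1$. To handle this I would combine the $J$-invariance of the degenerate block with Lemma~\ref{lem:ricciflatcone}(2) to show that when this degenerate block is present, some irreducible factor $T_i$ is forced to have real dimension at least $6$ in the general case, respectively at least $8$ in the Einstein case. A routine case analysis then verifies that the resulting values of $D=k_0^2+\ell$ coincide with those already appearing in the lists of Theorems~\ref{thm:degree} and~\ref{thm:degreeeinstein}, completing the local proof.
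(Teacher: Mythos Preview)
Your outline is essentially the paper's strategy, and the Riemannian case is handled correctly. There is one genuine gap in the signature $(2,2n)$ case, and one small misconception worth correcting.

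\medskip
\textbf{The gap.} In the indefinite case you propose to use Lemma~\ref{lem:ricciflatcone}(2) to force $\dim T_i \geq 6$ (general) and $\dim T_i \geq 8$ (Einstein) when a parallel null vector is present. But Lemma~\ref{lem:ricciflatcone}(2) assumes Ricci-flatness; it yields only the Einstein bound $\dim T_i \geq 8$ and says nothing in the non-Einstein situation. The paper obtains the general bound $\dim T_i \geq 6$ by a different, curvature-counting argument: the cone structure on the factor $T_i$ (via \cite[Lemmas~2 and~5]{FedMat}) provides, at a generic point, a non-null vector $u\in T_i$ with $R(u,\cdot)\cdot = 0$. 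If $\dim T_i = 4$, then $u, Ju, v, Jv$ form a basis of $T_i$ (here $v$ is the parallel null vector), and each of these four vectors kills the curvature endomorphism. Hence the curvature of the factor vanishes, contradicting irreducibility. This is the missing ingredient; ``$J$-invariance of the degenerate block'' alone does not produce it.

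\medskip
\textbf{The misconception.} The extra parallel null direction does not sit in a separate ``two-dimensional degenerate summand'' of the decomposition from \cite[Theorem~5]{FedMat}. The holonomy decomposition is still $T_0\oplus T_1\oplus\cdots\oplus T_\ell$ with $T_0$ the maximal \emph{nondegenerate} trivial piece; the parallel null vector $v$ (and $Jv$) lies inside one of the irreducible $T_i$ with $i\geq 1$. What changes is that the basis $\{\tau_\alpha\}$ in \eqref{presentation1} spans all holonomy-invariant covectors, so its size is $2k$ with $k=k_0+1$ rather than $k_0$, and $D=k^2+\ell$ (not $k_0^2+\ell$). It is precisely this $T_i$ that the argument above forces to have $\dim T_i\geq 6$ (resp.\ $\geq 8$), which compensates exactly for the inflation $k_0\mapsto k_0+1$ in the inequality count.
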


The proof of Theorem \ref{thm:degreelocal} in the case when the metric $g$ is positively definite is more simple than in the case when the signature is $(2,2n)$. Moreover, the arguments for the proof when $g$ is positively definite are implicitly contained in the proof when the signature is $(2,2n)$. We therefore restrict to the latter case and assume that $g$ has signature $(2,2n)$ in what follows; the Riemannian signature is explained in Section \ref{sec:ideas} and 
we leave it as an easy exercise. 

\begin{proof} 
Let $p$ be an arbitrary point in $M$. Consider a maximal orthogonal holonomy decomposition of $T_{p}M$. 
\begin{align}
T_pM=T_0\otimes T_1\otimes...\otimes  T_\ell\label{eq:decomptangenttrivial}. 
\end{align}
Here $T_0$ is a nondegenerate subspace of $T_pM$ such that the holonomy group acts trivially and such that it is $J$-invariant,
 and $T_i$ for $i\ge 1$ are $J$-invariant nondegenerate subspace invariant w.r.t. the action of the holonomy group. We assume that the decomposition is maximal in the sense that no $T_i$, $i\ge 1$ has a holonomy-invariant nontrivial  nondegenerate subspace and  therefore can not be decomposed further.     The existence of such a decomposition is standard and follows for example from \cite{Wu}. 

If in addition the initial manifold is Ricci-flat, then the restriction of the curvature tensor to each $T_i$ is also Ricci-flat.

It is well known that symmetric hermitian {\it parallel} $(0,2)$-tensor fields  on $M$ are in one-one correspondence with symmetric hermitian $(0,2)$-tensors on $T_pM$ that are invariant w.r.t. the action of the holonomy group. 
As it was shown in \cite[Theorem 5]{FedMat}, every  symmetric  {\it holonomy-invariant} $(0,2)$-tensor  on $T_pM$ has the form
 \begin{equation}\label{presentation1}
\sum_{\alpha, \beta =1}^{2k}c_{\alpha \beta } \tau_\alpha \otimes  \tau_\beta + C_1 g_1 + ...+ C_\ell  g_\ell.  
\end{equation}
Here $\{\tau_i\}_{i=1,...,2k}$ is a basis in the subspace of $T^*M$ consisting of those elements that are invariant w.r.t. the holonomy group, and $g_i$, $i=1,..., \ell $  is the restriction of $g$ to $T_i$ viewed as $(0,2)$-tensors on $T_pM$.  Note that in the case of indefinite  signature  the number  $k$   must not coincide with $k_0:= \tfrac{1}{2}\dim T_0$, since it might exist a  light-like holonomy-invariant vector such that it is orthogonal to all vectors from  $T_0$. In the signature $(2,2n)$ we have $k=k_0$ or $k=k_0+1$. The coefficients  $c_{\alpha\beta}$ satisfy $c_{\alpha\beta}=c_{\beta\alpha}$ so $(c_{\alpha\beta})$ is a symmetric matrix. Our  assumption that the parallel tensor is  hermitian implies that the matrix $c_{\alpha\beta}$ is hermitian.

Clearly, the dimension of the space of the tensors of the form \eqref{presentation1} is the number of free parameters $c_{\alpha\beta}$, $C_i$.  
It is well known that the space of symmetric hermitian $2k\times 2k$ matrices has dimension $k^2$  so the first term of \eqref{presentation1} gives us $k^2$ dimensions and  we obtain $ k^2 + \ell$  in total which is as we claimed.
Our goal  is to show that $k$ and $\ell$ satisfy the restrictions in the Theorems \ref{thm:degree} and \ref{thm:degreeeinstein}.

Suppose $\ell =0$  (that is $g$ is flat and hence, the initial $2n$-dimensional metric has constant holomorphic curvature). Then, $k_0=n+1$ and we obtain that the dimension of the space of the parallel hermitian tensors is $(n+1)^2$ as we want. 
 
Suppose $\ell\ge 1 $ and take $i\ge 1$. By \cite[Lemma 2]{FedMat}, the dimension of $T_i$ is $\ge 3$ and since it is even, we have $\dim(T_i)\ge 4$.  Moreover, under the additional assumption that $T_i$ is Ricci-flat, we have $\dim(T_i)\ge 6$ by Lemma \ref{lem:ricciflatcone}.

Suppose now $T_i$ with $i\ge 1$ contains a nonzero holonomy-invariant vector. Let us show that then the dimension of this  $T_i$ is $\ge 6$.  
Let us denote this vector by $v$. Note that the vector $Jv$ is also holonomy-invariant and any linear combination of $v$ and $Jv$ is light-like since otherwise there would exist a nontrivial holonomy-invariant nondegenerate (two-dimensional) subspace.

We extend $T_i$ and also $v,Jv\in  T_i$ to the whole manifold by parallel translating these objects along all possible ways starting at $p$. The  extension  of $T_i$ is well-defined and gives us an integrable distribution on $M$.  The extensions  of $v$ and $Jv$ are  also well-defined and are parallel vector fields.

It is sufficient to show that under the assumption $\dim(T_i)=4$ the restriction of the curvature to this distribution vanishes, since this will imply in view of the theorem of Ambrose-Singer that the holonomy group acts trivially on $T_i$ which contradicts the assumption that $T_i$  has no nontrivial   holonomy-invariant  nondegenerate subspaces. 
We choose a generic point $q$.  Since the point is generic, then by \cite[Lemma 5 and Lemma 2]{FedMat} there exists a vector $u\in T_i(q)$ such that $g(u,u)\ne 0$ and such that $R(u,.).= 0$.  We consider the basis $\{u, Ju, v, Jv\}$ of $T_i(q)$. This is indeed a basis since the vectors $u$ and $Ju$ (resp. $v$ and $Jv$) are nonproportional and 
therefore  linearly independent and no nontrivial linear  combination of $u$ and $Ju$ can be equal to a nontrivial linear combination of $v$ abd $Jv$ since any linear combination of $v$ and $Jv$ is light-like and any linear combination of $u$ and $Ju$ is not light-like.  
Now, the  vectors $u,v, Ju, Jv$ satisfy the condition $R(u,.).= R(Ju,.).= R(v,.).= R(Jv,.)= 0$. Indeed, $R(u,.).=0$ is essentially the choice of our vector,  $R(Ju,.).=0$ is because the Riemanian curvature of a Kähler metric is $J$-invariant, $ R(v,.).= R(Jv,.)= 0$ is fulfilled because $v$ and $Jv$ are parallel. Thus, $\dim(T_i)\ge 6$.

Let us now suppose that $T_i$ is Ricci flat and contains a nonzero (and therefore light-like) holonomy-invariant vector. Then, it follows from Lemma \ref{lem:ricciflatcone} that $\dim(T_i)\ge 8$.

We obtain that the number $k$ is at most $n-1$ and the number  $\ell$ is at most $\left[\frac{n-k-1}{2}\right]$. Indeed, suppose there exists a nonzero holonomy-invariant  vector contained in one $T_i$ with $i\ge 1$. Without loss of generality we may assume $i=1$. 
 As we explained above,  the dimension of  $T_1$ is $\ge 6$ and the dimension of all other $T_j$ for $j\ge 2$ is at least $4$. The dimension of $T_0$ is $2k- 2$. Then, 
 \begin{equation} 
 \underbrace{\dim(T_0)}_{2k-2} + \underbrace{\dim(T_1)}_{\ge 6}+ \underbrace{\dim(T_2)}_{\ge 4}+ ... +\underbrace{\dim(T_\ell)}_{\ge 4}=2(n+1)\label{-1}\end{equation}
 implying $k\le n-1$ and   $\ell\le \left[\frac{n-k-1}{2}\right]$ as we want. 
 
 Suppose now there exists no nonzero  holonomy-invariant  vector contained in one $T_i$ with $i\ge 1$. Then, $\dim(T_0)=2k$ and the dimension of all   $T_j$ for $j\ge 1$ is at  least  $4$. Here we obtain $\ell\le \left[\frac{n-k-1}{2}\right]$ by the same argument. Indeed, in this case 
  \begin{equation} \label{+1} 
 \underbrace{\dim(T_0)}_{2k } + \underbrace{\dim(T_1)}_{\ge 4}+   ... +\underbrace{\dim(T_\ell)}_{\ge 4}=2(n+1)\end{equation}
 implying  $\ell\le \left[\frac{n-k-1}{2}\right]$ as we want.

  Assume now the metric $\hat g$ is Ricci-flat. Then, each $T_i$ is Ricci flat, so its dimension is $\ge 6$. As we have shown above, if $T_i$ (with $i\ge 1$) 
  contains a  nonzero  holonomy-invariant  vector, then $\dim(T_i)\ge 8$, and the analog of \eqref{-1} looks 
  $$\underbrace{\dim(T_0)}_{2k-2} + \underbrace{\dim(T_1)}_{\ge 8}+ \underbrace{\dim(T_2)}_{\ge 6}+ ... +\underbrace{\dim(T_\ell)}_{\ge 6}=2(n+1).$$
 implying $k\le n-2$  and $\ell\le \left[\frac{n-k-1}{3}\right]$ as we want. 
 If there exists no nonzero  holonomy-invariant  vector contained in one $T_i$ with $i\ge 1$. Then, $\dim(T_0)=2k$, the dimension of all   $T_j$ for $j\ge 1$ is at least $6$ and we obtain  $k\le n-2$ and 
 $\ell\le \left[\frac{n-k-1}{3}\right]$ by the same argument. Indeed, in this case 
  $$
 \underbrace{\dim(T_0)}_{2k } + \underbrace{\dim(T_1)}_{\ge 6}+   ... +\underbrace{\dim(T_\ell)}_{\ge 6}=2(n+1)$$
 implying $k\le n-2$ and  $\ell\le \left[\frac{n-k-1}{3}\right]$ as we want.
 Theorem \ref{thm:degreelocal}  is proved. 
 \end{proof}

\subsection{Proof of the first parts of Theorems \ref{thm:degree} and \ref{thm:degreeeinstein} in the global situation}
\label{sec:proofthmdegreeglobal}
In this section, we complete the proof of the first parts of the Theorems \ref{thm:degree} and \ref{thm:degreeeinstein}. Let $(M,g,J)$ be a simply connected K\"ahler manifold of real dimension $2n\geq 4$. 

We call a subset $U\subseteq M$ a \emph{ball} if $U$ is open, homeomorphic to an open $2n$-ball in $\mathbb{R}^{2n}$ and has compact closure $\bar{U}$. 

Since a ball $U$ satisfies all the assumptions in Corollary \ref{cor:changeofmetric}, Theorem \ref{thm:coneconstruction2} and Theorem \ref{thm:degreelocal}, we obtain that the degree of mobility $D(g_{|U},J_{|U})$ of the restriction of the K\"ahler structure is given by one of the values in the list of Theorem \ref{thm:degree}.

If in addition $g$ is Einstein, it follows from Corollary \ref{cor:changeofmetriceinstein}, Theorem \ref{thm:coneconstruction2} and Theorem \ref{thm:degreelocal} that $D(g_{|U},J_{|U})$ is given by one of the values in the list of Theorem \ref{thm:degreeeinstein}.

Recall from \cite[Proposition 4]{ApostolovI} or \cite[equation (1.3)]{Mikes} that the space $\mathcal{A}(g,J)$ of hermitian symmetric solutions of equation \eqref{eq:mainA} is isomorphic to the subspace $\mathrm{Par}(E,\nabla^E)$ of the space of sections of a certain vector bundle $\pi:E\rightarrow M$ whose elements are parallel with respect to a certain connection $\nabla^E$ on $E$. In particular, we have $D(g,J)=\mathrm{dim}\,\mathrm{Par}(E,\nabla^E)$. 

The next statement will complete the proof of the first parts of the Theorems \ref{thm:degree} and \ref{thm:degreeeinstein}.
\begin{lem}\label{lem:extensiontoglobal}
Let $M$ be a simply connected manifold and let $\pi:E\rightarrow M$ be a vector bundle over $M$ with a connection $\nabla^E$. Let $I\subseteq \mathbb{N}$ be a set of nonnegative 
integers and suppose that for every ball $U\subseteq M$, we have $\mathrm{dim}\,\mathrm{Par}(E_{|U},\nabla^E)\in I$. Then, $\mathrm{dim}\,\mathrm{Par}(E,\nabla^E)\in I$.
\end{lem}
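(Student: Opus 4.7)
The plan is to realize $\dim\mathrm{Par}(E,\nabla^E)$ as the minimum of the numbers $\dim\mathrm{Par}(E_{|U},\nabla^E)$ taken over all balls $U$ through a chosen basepoint. Fix $p\in M$ and, for each connected open set $W\ni p$, let $V_W\subseteq E_p$ denote the image of the injective evaluation map $\mathrm{ev}_p\colon\mathrm{Par}(E_{|W},\nabla^E)\to E_p$, $s\mapsto s(p)$. Restriction of parallel sections forces $V_{W_2}\subseteq V_{W_1}$ whenever $W_1\subseteq W_2$, so in particular $V_M\subseteq V_U$ for every ball $U\ni p$. Since $E_p$ is finite-dimensional, the family $\{V_U\mid U\ni p\text{ a ball}\}$ satisfies the descending chain condition and therefore contains a minimum element $V_{U_0}$; monotonicity and minimality together then force $V_U=V_{U_0}$ for every ball $U\supseteq U_0$.

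The reduction is completed by proving $V_M=V_{U_0}$, which would give $\dim\mathrm{Par}(E,\nabla^E)=\dim\mathrm{Par}(E_{|U_0},\nabla^E)\in I$. Fix $v\in V_{U_0}$. Parallel-transporting $v$ from $p$ along smooth paths in $M$ produces a candidate global section $\tilde s$, and, because $M$ is simply connected, $\tilde s$ is single-valued (and then automatically parallel) as soon as the $\nabla^E$-holonomy along every smooth loop $\gamma$ based at $p$ fixes $v$. For a given such $\gamma$, I aim to produce a ball $U_\gamma\supseteq U_0$ with $\gamma([0,1])\subseteq U_\gamma$: the equality $V_{U_\gamma}=V_{U_0}$ from the previous paragraph then yields $v\in V_{U_\gamma}$, so $v$ is the value at $p$ of a parallel section on $U_\gamma$, and its $\gamma$-holonomy---which depends only on the restriction of $\nabla^E$ to $\gamma$ and hence coincides with the ambient one---must fix $v$.

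The main obstacle is producing this enveloping ball $U_\gamma$. Since $M$ is simply connected, $\gamma$ bounds a singular disc, and a sufficiently small open neighborhood of (a generic smooth perturbation of) this disc, unioned with $U_0$, yields an open subset of $M$ that is diffeomorphic to $\mathbb{R}^{\dim M}$ and has compact closure---at least when $M$ is not itself compact. In the compact case one may instead take $U_\gamma=M\setminus K$ for a sufficiently small closed set $K$ disjoint from $\bar U_0\cup\gamma([0,1])$ (which is a ball whenever $M\setminus K$ admits a global Euclidean chart, as for instance for $\mathbb{C}P(n)$ or $S^n$); alternatively, and as a fully general substitute, one covers $\gamma$ by a finite chain of balls and iteratively extends $v$ across each, using at every stage the equality $V_W=V_{U_0}$ available for all balls $W\supseteq U_0$.
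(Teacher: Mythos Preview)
Your strategy---pick a ball $U_0$ through $p$ with $V_{U_0}$ of smallest dimension, then prove $V_M=V_{U_0}$ by showing that the holonomy of every loop at $p$ fixes each $v\in V_{U_0}$---is a reasonable outline and is genuinely different from the paper's argument. But the step you yourself flag as ``the main obstacle'' really is one, and none of the three suggestions you offer closes it.

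The difficulty is that a tubular neighbourhood of $\bar U_0\cup\gamma([0,1])$ is \emph{not} a ball: once the loop leaves $U_0$ and returns, the union has the homotopy type of a circle, and thickening cannot repair that. Your first fix (fill $\gamma$ by a disc $D$ and take a neighbourhood of $D\cup U_0$) is the right idea, but it needs real work that you do not supply: one must embed $D$ (already delicate when $\dim M=4$) and then argue that a suitable neighbourhood of $D$ together with the \emph{fixed} ball $U_0$ is again diffeomorphic to $\mathbb{R}^{\dim M}$, which requires controlling how the two pieces overlap. The compact/noncompact dichotomy you draw is a red herring, and the $M\setminus K$ suggestion only applies to very special $M$. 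The ``chain of balls'' alternative does not escape the problem either: if each ball in the chain must contain $U_0$ (so that $V_W=V_{U_0}$ is available), then covering all of $\gamma$ this way is the original problem again; if the balls are small and need not contain $U_0$, you lose all control over parallel transport between them.

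The paper sidesteps loops entirely by invoking the Ambrose--Singer theorem. The holonomy Lie algebra $\mathfrak h(M,p)$ is spanned by operators $\tau_c^{-1}R^E(X,Y)\tau_c$ as $c$ ranges over \emph{paths} from $p$; being finite-dimensional, it is already spanned by finitely many such, coming from curves $c_1,\dots,c_N$. After a small perturbation these are embedded arcs meeting only at $p$, so their union is a contractible $1$-complex and a thin tubular neighbourhood $U$ is honestly a ball. Since the generators lie in $U$ one gets $\mathfrak h(U,p)=\mathfrak h(M,p)$, whence $\dim\mathrm{Par}(E|_U)=\dim\mathrm{Par}(E)\in I$. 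The decisive point is that a bouquet of arcs from a single point, unlike a loop, has a simply connected thickening; this is exactly what your construction of $U_\gamma$ is missing. If you wish to keep your $V_{U_0}$ framework, the cleanest repair is to run the same Ambrose--Singer argument and observe that each generating path $c_i$ (being an arc, not a loop) can be enclosed, together with $U_0$, in a single ball.
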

\begin{proof}
\begin{figure}
  \includegraphics[width=.6\textwidth]{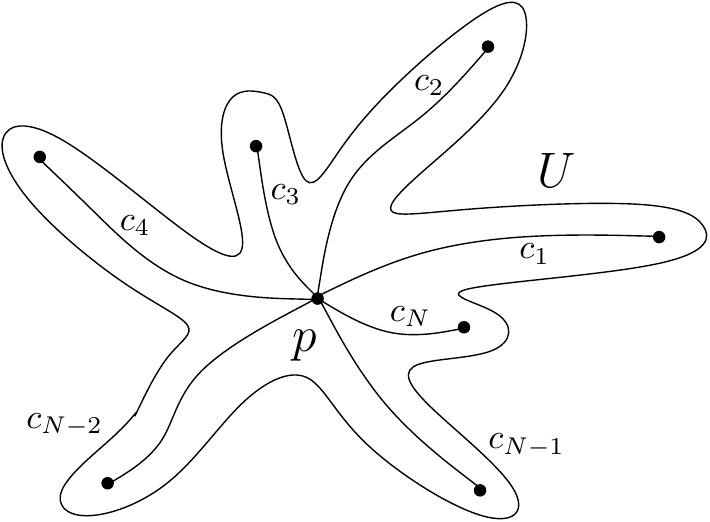}
  \caption{We can choose a tubular neighborhood $U$ of the union $\bigcup_{i=1}^N c_i([0,1])$ of the curves $c_1,...,c_N:[0,1]\rightarrow M$ such that $U$ is a ball.}\label{2}
\end{figure}
First let us introduce some notions. Let $p\in U$, where $U$ is any simply connected open subset of $M$, and let $H(U,p)$ be the holonomy group of the restriction $\nabla^E:\Gamma(E_{|U})\rightarrow \Gamma(T^* U\otimes E_{|U})$ in the point $p$. The space $\mathrm{Par}(E_{|U},\nabla^E)$ is isomorphic to the holonomy-invariant elements $P(U,p)=\{u\in E_p:hu=u\,\forall h\in H(U,p)\}$ in the fiber $E_p$, the isomorphism is given by parallel extension of $u\in P(U,p)$ to a parallel section on $U$. Since $U$ is simply connected, the group $H(U,p)$ is connected. Then, $P(U,p)$ coincides with 
\begin{equation} \{u\in E_p:hu=0\,\forall h\in\mathfrak{h}(U,p)\},
\end{equation}
where $\mathfrak{h}(U,p)$ is the Lie algebra of $H(U,p)$. Let $R^E\in \Gamma(\Lambda^2 T^*M\otimes\mathrm{End}(E))$ be the curvature of $\nabla^E$. By the theorem of Ambrose-Singer (see e.g. \cite{Kob}), $\mathfrak{h}(U,p)$ as a vector space is generated by elements of the form 
\begin{align}
(\tau_c)^{-1}R^E(X,Y)\tau_c:E_p\rightarrow E_p,\label{eq:generators}
\end{align}
where $X,Y\in T_q M$, $q\in M$, and  $\tau_c:E_{c(0)}\rightarrow E_{c(1)}$ is the parallel displacement along  a certain piece-wise smooth curve  $c:[0,1]\rightarrow U$ with $c(0)=p$ and $c(1)=q$. 

Since $\mathfrak{h}(M,p)$ is a finite-dimensional vector space  there  exist finitely many  curves $c_1,...,c_N:[0,1]\rightarrow M$  starting at  $p$  such that 
$\mathfrak{h}(M,p)$ as a vector space is generated by finitely many elements of the form 
\begin{align}
(\tau_{c_i})^{-1}R^E(X,Y)\tau_{c_i}:E_p\rightarrow E_p.\label{eq:generators1}
\end{align}
If we sligtly perturbe these curves, the corresponding elements \eqref{eq:generators1} will still generate $\mathfrak{h}(M,p)$ so we may assume that the curves have no intersections and self-intersections. Then, a sufficiently thin tubular neighborhood $U$ of the union of the curves $c_i$ is a ball, see  figure \ref{2}. 

The degree of mobility of the restriction of the Kähler structure to the ball $U$ clearly coincides with the degree of mobility of the Kähler structure on the whole manifold since the holonomy groups have the same algebras and therefore coincide. 
\end{proof}

\subsection{Proof of the second ``realization'' part of Theorems \ref{thm:degree} and \ref{thm:degreeeinstein}}
\label{sec:realization}
Let $2n\geq 4$,  $k\in \{0,...,n-1\}$ and $\ell\in\{1,...,[\frac{n+1-l}{2}]\}$. We need to construct a  $2n$-dimensional simply-connected Riemannian  K\"ahler  manifold  $(M,g,J)$
such that $D(g,J)=k^2+\ell$  and such that in the case $k^2+\ell\ge 2$ there exists a metric $\tilde g$  that is c-projectively but not affinely  equivalent to $g$.

The construction is as follows: we consider the direct product 
\begin{align}
(\hat{M},\hat{g},\hat{J})=(M_0,g_0,J_0)\times(M_1,g_1,J_1)\times...\times(M_\ell,g_\ell,J_\ell)\label{eq:decompmhat}
\end{align}
of Riemannian  Kähler manifolds. The manifold $(M_0,g_0,J_0)$ is the standard $R^{2k}$ with the standard flat metric and the standard complex structure. The Riemannian Kähler  manifolds $(M_i,g_i,J_i)$  for $i\ge 1$ satisfy the following conditions: they have dimension $\ge 4$, admit no nontrivial parallel hermitian symmetric $(0,2)$-tensor field, are cone manifolds, and the sum of their dimensions is  $2(n+1- k)$. The existence of such  $(M_i,g_i,J_i)$ is trivial: because of the condition $\ell\in\{1,...,[\frac{n+1-l}{2}]\}$ there exists a decomposition of $2(n+1- k)$ in the sum of  the integer  numbers  $2k_1+...+2k_\ell$ such that every $k_i\ge 2$. Now, as the manifold $(M_i,g_i,J_i)$ we take the conification of the standard $(R^{2k_i-2}, g_{flat}, J_{standard})$. They are cone  manifolds and they admit no nontrivial parallel hermitian symmetric $(0,2)$-tensor fields since for example by Theorem \ref{thm:coneconstruction2} the existence of such a tensor field will imply that the constant $B$ of the standard $(R^{2k_i-2}, g_{flat}, J_{standard})$ is $B=-1$ though it is equal to zero.

Let us also note, in view of the proof of Theorem \ref{thm:coneconstruction1}, that $(M_0,g_0,J_0)=(\mathbb{R}^{2k}\setminus \{0\},g_{flat},J_{0})$ coincides (at least locally) 
with the conification of $(\mathbb{C}P(k-1),g_{FS},J_{standard})$ via the Hopf fibration $S^{2k-1}\rightarrow \mathbb{C}P(n)$.

The direct product \eqref{eq:decompmhat} is clearly a Riemannian Kähler manifold. By \cite[Lemma 5]{FedMat}, it is a cone manifold, so by Theorem \ref{thm:coneconstruction1} it is (at least in a neighborhood of almost every point) the  conificiation of a certain $2n$-dimensional Kähler manifold. This manifold has degree of mobility $k^2+\ell$ since the dimension of  parallel symmetric hermitian $(0,2)$ tensors on its conification (which is \eqref{eq:decompmhat}) is $k^2+\ell$ as we want. This completes the proof of Theorem \ref{thm:degree}.

Now, in order to construct a Kähler-Einstein metric with degree of mobility  $D(g,J)=k^2+\ell$ where $k\in \{0,...,n-2\}$ and $\ell\in\{1,...,[\frac{n+1-l}{3}]\}$ we proceed along the same lines of ideas used above but assume in addition that the manifolds $(M_i,g_i,J_i)$ are Ricci-flat (and as such manifolds we can take conifications of Kähler-Einstein manifolds with scalar curvature chosen in correspondence with section \ref{-4}) and irreducible. The restrictions $k\in \{0,...,n-2\}$ and $\ell\in\{1,...,[\frac{n+1-l}{3}]\}$ imply that this is possible.   This completes the proof of Theorem \ref{thm:degreeeinstein}.

\section{Proof of Theorems \ref{thm:hprotrafo} and \ref{thm:hprotrafoeinstein}}
\label{sec:thmhprotrafo}
We need to  show that  $\mathrm{dim}(\mathfrak{c}(g,J)/\mathfrak{i}(g,J))$ is given by one of the values in the list of Theorem \ref{thm:hprotrafo} for a generic metric or by one of the values in the list of Theorem \ref{thm:hprotrafoeinstein} if the metric is Einstein. We assume that the manifold is simply connected and that the Kähler metric is Riemannian. 
 
Denote by $\mathfrak{h}(g,J)$ the Lie algebra of homothetic vector fields of $(M,g,J)$, i.e., vector fields $v$ satisfying $\mathcal{L}_v g=\mbox{const}\cdot g$, where $\mathcal{L}_v$ denotes the Lie derivative with respect to $v$. Consider the following sequence
\begin{align}
0\rightarrow \mathfrak{h}(g,J)\hookrightarrow \mathfrak{c}(g,J)\overset{f}{\longrightarrow} \mathcal{A}(g,J)/\mathbb{R}g\rightarrow 0,\label{eq:sequence}
\end{align}
where the mapping $f$ is given by 
$$f(v)=-\frac{1}{2}\left(\mathcal{L}_v g-\frac{\mathrm{trace}(g^{-1}\mathcal{L}_v g)}{2(n+1)}g\right)\mbox{ mod }\mathbb{R}g.$$
It is straight-forward to check that the tensor contained in the brackets on the right-hand side is indeed a solution of \eqref{eq:mainA}, for a proof see \cite[Lemma 2]{MatRos}. From the formula for $f$, it is straight-forward to see that $f(v)=0 \mbox{ mod }\mathbb{R}g$ if and only if $v$ is a homothetic vector field. Thus, the kernel of $f$ coincides with the image of the inclusion map from $\mathfrak{h}(g,J)$ to $\mathfrak{c}(g,J)$ and the sequence \eqref{eq:sequence} is exact at the first two stages. In particular, it follows that 
\begin{align}
\mathrm{dim}(\mathfrak{c}(g,J)/\mathfrak{h}(g,J))\leq D(g,J)-1.\label{eq:inequality}
\end{align}

From this inequality we see that in the case $D(g,J)=2$ the codimension of $\mathfrak{i}(g,J)$ in $\mathfrak{c}(g,J)$ is at most equal to one so  $\dim(\mathfrak{c}(g,J)/\mathfrak{i}(g,J))$ is $1$ or $0$. These two values are equal to  $\mathrm{dim}(\mathfrak{c}(g,J)/\mathfrak{i}(g,J))$ for certain $2n\ge 4$-dimensional  Riemannian K\"ahler-Einstein  manifolds  $(M, g,J)$ admitting a c-projectively equivalent metric which is not affinely equivalent to it. 
Indeed, most of the closed Riemannian Kähler-Einstein manifolds $(M,g,J)$ constructed in \cite{ApostolovII} that admit c-projectively equivalent metrics  which are not affinely equivalent to $g$, are of non-constant holomorphic  curvature and therefore admit no non-killing c-projective vector field by the Yano-Obata conjecture \cite{MatRos}. These examples have therefore $\dim(\mathfrak{c}(g,J)/\mathfrak{i}(g,J))=0$. 
In order to consider the case $D(g, J)\ge  3$, 
and also to construct examples with $\dim(\mathfrak{c}(g,J)/\mathfrak{i}(g,J))=1$, we need the following 
 
\begin{lem} \label{-5}   Let $(M,g,J)$ be a  connected Kähler  manifold of real dimension $2n\ge 4$ such that  the equation  \eqref{eq:hprosystem} admits a solution  $(A, \lambda, \mu)$  with $\lambda\ne 0$. Assume $B\ne 0$.  Then, 
\begin{align}
\mathrm{dim}(\mathfrak{c}(g,J)/\mathfrak{i}(g,J))= D(g,J)-1.\label{eq:Dhprovec}
\end{align}
\end{lem}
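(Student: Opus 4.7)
My plan is to combine two ingredients with the exact sequence \eqref{eq:sequence}: first, showing that $f$ is surjective (so that $\dim(\mathfrak{c}(g,J)/\mathfrak{h}(g,J)) = D(g,J) - 1$), and second, showing that $\mathfrak{h}(g,J) = \mathfrak{i}(g,J)$ inside $\mathfrak{c}(g,J)$ under the hypotheses $B \neq 0$ and existence of a solution with $\lambda \neq 0$. Together these yield \eqref{eq:Dhprovec}.

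For surjectivity, given $A \in \mathcal{A}(g,J)$ with associated $(\lambda,\mu)$ from \eqref{eq:hprosystem}, I would consider the gradient vector field $\Lambda := g^{-1}\lambda$. Writing $\hat{A} := g^{-1}A$ as a $(1,1)$-tensor, the second equation of \eqref{eq:hprosystem} reads $\nabla_X \Lambda = \mu X + B\hat{A}X$; together with the hermitian symmetry $\hat{A}J = J\hat{A}$ this directly yields $\mathcal{L}_\Lambda J = 0$ and $\mathcal{L}_\Lambda g = 2(\mu g + BA)$. Using the third equation $d\mu = 2B\lambda = \tfrac{B}{2}\,d(\mathrm{trace}\,A)$, one obtains $\mu = \tfrac{B}{2}\mathrm{trace}\,A + c_0$ for some constant $c_0$, which after substitution into the defining formula for $f$ collapses the trace terms and gives
\[
f(\Lambda) \;=\; -\tfrac{c_0}{n+1}\,g - BA \;\equiv\; -B\,[A] \pmod{\mathbb{R}g}.
\]
Hence $\Lambda \in \mathfrak{c}(g,J)$, and since $B \neq 0$ the composition $A \mapsto \Lambda \mapsto f(\Lambda)$ is (up to the nonzero scalar $-B$) the quotient map $\mathcal{A}(g,J) \to \mathcal{A}(g,J)/\mathbb{R}g$, so $f$ is surjective.

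For the equality $\mathfrak{h}(g,J) = \mathfrak{i}(g,J)$ inside $\mathfrak{c}(g,J)$, which is the main obstacle of the proof, let $v \in \mathfrak{c}(g,J)$ with $\mathcal{L}_v g = cg$. Since $v$ is c-projective it is holomorphic, and $\mathcal{L}_v g = cg$ with $c$ constant then implies $\mathcal{L}_v \nabla = 0$, so $v$ is affine. Using that $\mathcal{L}_v$ commutes with $\nabla$ on tensors, together with $\mathcal{L}_v g = cg$, $\mathcal{L}_v \omega = c\omega$ and $\mathcal{L}_v J = 0$, Lie-differentiating the first line of \eqref{eq:hprosystem} for the given nontrivial solution $(A,\lambda,\mu)$ shows that $A'' := \mathcal{L}_v A - cA$ and $\lambda'' := \mathcal{L}_v \lambda$ again satisfy the first line of \eqref{eq:hprosystem}; by Theorem \ref{thm:hprosystem} a function $\mu''$ then exists completing the system with the same constant $B$. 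Comparing this $\mu''$ with the expression obtained by Lie-differentiating the second line of \eqref{eq:hprosystem} yields the identity
\[
cBA \;=\; \bigl(\mu'' - v(\mu) - c\mu\bigr)\,g.
\]
If $c \neq 0$, then $A = fg$ for a function $f$; but inspection of \eqref{eq:mainA} for $A = fg$ forces $df = 0$, hence $f$ is constant and $\lambda = 0$, contradicting the hypothesis. Therefore $c = 0$, so $v \in \mathfrak{i}(g,J)$. Combined with surjectivity of $f$, this yields $\dim(\mathfrak{c}(g,J)/\mathfrak{i}(g,J)) = \dim(\mathfrak{c}(g,J)/\mathfrak{h}(g,J)) = D(g,J) - 1$.
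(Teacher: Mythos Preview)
Your argument is correct. The surjectivity half is essentially identical to the paper's: both construct the section $A\mapsto\Lambda=g^{-1}\lambda$ and verify $f(\Lambda)\equiv -B\,A$ (the paper normalizes $B=-1$ first, you keep $B$ general, which is harmless).

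For the step $\mathfrak{h}(g,J)=\mathfrak{i}(g,J)$ you take a genuinely different route. The paper argues in one line: rescaling $g\mapsto cg$ sends $B$ to $B/c$, while pulling back by the homothety $F$ (which is an isometry $(M,cg)\to(M,g)$) must preserve $B$; hence $B/c=B$ and $c=1$. Your approach instead Lie-differentiates \eqref{eq:hprosystem} along a homothetic $v$, produces a new solution $A''=\mathcal{L}_vA-cA$, and compares the two expressions for $\nabla\lambda''$ to obtain $cBA=(\text{function})\cdot g$; since $A=f\cdot g$ forces $\lambda=0$ (easily checked by evaluating \eqref{eq:mainA} on orthonormal vectors in dimension $2n\ge4$), this gives $c=0$. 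The paper's argument is slicker; yours is more hands-on but has the mild advantage of staying infinitesimal throughout and not needing to speak of an actual diffeomorphism $F$.

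One small gap to close: when you invoke Theorem~\ref{thm:hprosystem} to produce $\mu''$ for $A''$, that theorem assumes $D(g,J)\ge 3$. The lemma as stated does not. When $D(g,J)=2$ you should observe directly that $A''\in\mathcal{A}(g,J)=\mathbb{R}g\oplus\mathbb{R}A$, and for any constant linear combination $\alpha g+\beta A$ the triple $(\alpha g+\beta A,\,\beta\lambda,\,-\alpha B+\beta\mu)$ solves \eqref{eq:hprosystem} with the same $B$. The same remark applies to your surjectivity argument, where you tacitly use that \emph{every} $A\in\mathcal{A}(g,J)$ extends to a full solution of \eqref{eq:hprosystem}; again this is Theorem~\ref{thm:hprosystem} for $D\ge3$ and the observation above for $D=2$. (The paper's proof carries the same implicit assumption, so you are not doing worse here.)
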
 
\begin{proof}  
Without loss of generality we can assume that $B=-1$. Let us first show that $g$ admits no (local) homothety which is not an isometry. 
Suppose $F:M\rightarrow M$ is a homothety for $g$, i.e. $F^* g=cg$ for a certain constant $c$. For the new metric $\tilde g= cg$, the system \eqref{eq:hprosystem} holds for the  constant $\tilde B= B(cg)=\tfrac{1}{c}B$. But the constant $B$ is unique: since $cg$ and $g$ are isometric via $F$, the constants $B$ and $B(cg)$ (i.e., the constant $B$ corresponding to the metric $cg$)  must coincide. It follows that $c=1$ and consequently every homothety is an isometry.

In view of this, the sequence \eqref{eq:sequence} reads 
\begin{align}
0\rightarrow \mathfrak{i}(g,J)\hookrightarrow \mathfrak{c}(g,J)\overset{f}{\longrightarrow} \mathcal{A}(g,J)/\mathbb{R}g\rightarrow 0,\label{eq:sequence1}
\end{align}
Let us now  show that the sequence \eqref{eq:sequence} is exact which of course immediately implies the equality \eqref{eq:Dhprovec}.

In order to do it, we show the existence of a splitting 
$$h:\mathcal{A}(g,J)/\mathbb{R}g\rightarrow\mathfrak{c}(g,J)$$
of the sequence \eqref{eq:sequence}. The mapping $h$ is explicit and  sends a  solution $A$ to the corresponding vector field $\Lambda=g^{-1}\lambda$. Using the system \eqref{eq:hprosystem}, it is straight-forward to check that $\Lambda$ is a c-projective vector field (for an explicit proof see \cite[Proposition 10.3]{Tanno1978}). Moreover, the vector field $\Lambda$ is the same for $A$ and $A+\mbox{const}\cdot g$, hence, $h$ is well-defined and linear. For the composition of $f$ and $h$, we calculate
$$f( h (A\mbox{ mod }\mathbb{R}g))=f(\Lambda)=-\frac{1}{2}\left(2\nabla\lambda-\frac{\mathrm{trace}(\nabla\Lambda)}{n+1}g\right)\mbox{ mod }\mathbb{R}g$$
$$\overset{\eqref{eq:hprosystem}}{=}-\frac{1}{2}\left(\frac{2\mu g-2(n+1)A+\mathrm{trace}(A)g}{n+1}\right)\mbox{ mod }\mathbb{R}g.$$
The third equation in \eqref{eq:hprosystem} implies $d\mu=-2\lambda=-\tfrac{1}{2}d\mathrm{trace}A$. Thus, the functions $2\mu$ and $-\mathrm{trace}A$ coincide up to adding a constant. This shows that 
$$f( h (A\mbox{ mod }\mathbb{R}g))=A \mbox{ mod }\mathbb{R}g,$$
implying that \eqref{eq:sequence} is a splitting exact sequence and  \eqref{eq:Dhprovec} holds. 
\end{proof}

\begin{rem} 
The image of the map $h$ is precisely the ``canonical'' space of essential c-projective vector fields whose existence we announced in the introduction.  
\end{rem} 

Combining Lemma \ref{-5} with Theorem \ref{thm:degree}, we obtain the list from Theorem \ref{thm:hprotrafo}. Combining Lemma \ref{-5} with Theorem \ref{thm:degreeeinstein}, we obtain the list from Theorem \ref{thm:hprotrafoeinstein} (under the additional assumption that $B\neq0$). 

Note that by using Lemma \ref{-5}, also the values from the lists of Theorem \ref{thm:hprotrafo} or Theorem \ref{thm:degreeeinstein} can be obtained  as the number $\mathrm{dim}(\mathfrak{c}(g,J)/\mathfrak{i}(g,J))$ since in Section \ref{sec:realization} we also constructed metrics in all considered dimensions admitting solutions $(A, \lambda, \mu)$ of \eqref{eq:hprosystem} with $\lambda\ne 0$ such that their degree of mobility is two. The only case that cannot be constructed in this way is a $4$-dimensional K\"ahler-Einstein structure $(g,J)$ with $\mathrm{dim}(\mathfrak{c}(g,J)/\mathfrak{i}(g,J))=1$ since we can only produce examples of constant holomorphic  curvature by using the procedure from Section \ref{sec:realization}. We therefore construct an explicit example:  consider the local $4$-dimensional Riemannian K\"ahler structure $(g,\omega,J=-g^{-1}\omega)$ given in coordinates $x,y,s,t$ by  
\begin{align}
\begin{array}{c}
g=(x-y)( dx^2+ dy^2)+\frac{1}{x-y}\left[\left(ds+x dt\right)^2+\left(ds+ y dt\right)^2\right],\vspace{1mm}\\
\hat{\omega}=dx\wedge(ds+y dt) +dy\wedge( ds+x dt).
\end{array}\nonumber
\end{align}

This K\"ahler structure is a special case of those obtained in \cite{ApostolovI,BMMR}. It is straight-forward to check that $g$ is Ricci-flat but non-flat and that the $(1,1)$-tensor
\begin{align}
A=x\,\partial_x\otimes dx+y\,\partial_y\otimes dy+(x+y)\partial_s \otimes ds+xy\,\partial_s\otimes dt-\partial_t\otimes ds
\nonumber
\end{align}
is contained in $\mathcal{A}(g,J)$ (when viewed as $(0,2)$-tensor) and is non-parallel (and thus, corresponds to a K\"ahler metric $\tilde{g}$, that is c-projectively equivalent to $g$ and not affinely equivalent). Moreover, the vector field $v=x\,\partial_{x}+y\,\partial_{y}+2s\,\partial_s+t\,\partial_t$ is a c-projective vector field for $g$ and it is not Killing (thought, it is in fact an infinitesimal homothety, i.e. we have $\mathcal{L}_v g=3g$). For this metric we have $\dim(\mathfrak{c}(g,J)/\mathfrak{i}(g,J))=1$.

Let us now consider the case $B=0$. In this situation, as in the proof of Theorem \ref{thm:degree}, we change the metric in the c-projective class to make $B$ non-zero. This can be done on every open connected subset with compact closure, see Corollary \ref{cor:changeofmetric} and Corollary \ref{cor:changeofmetriceinstein}. The next lemma shows that the number $\mathrm{dim}(\mathfrak{c}(g,J)/\mathfrak{i}(g,J))$ remains the same. 
\begin{lem}[follows from \cite{EMN}]
Let $(M,g,J)$ be a simply connected K\"ahler manifold of real dimension $2n\geq 4$. Then, $\mathrm{dim}\,\mathfrak{i}(g,J)=\mathrm{dim}\,\mathfrak{i}(\tilde{g},J)$ for any metric $\tilde{g}$ that is c-projectively equivalent to $g$. 
\end{lem}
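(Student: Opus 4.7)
\emph{Plan.} Since $J$-planar curves are intrinsic to the c-projective class, the equality $\mathfrak{c}(g,J)=\mathfrak{c}(\tilde g,J)$ holds automatically as sets of vector fields, so the assertion is equivalent to the statement that $\dim\mathfrak{c}(g,J)/\mathfrak{i}(g,J)$ is a c-projective invariant. I would organise the argument by reducing to the setting of Lemma \ref{-5}.

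\emph{Main case.} When $D(g,J)\geq 3$ and the constants $B(g), B(\tilde g)$ from Theorem \ref{thm:hprosystem} are both nonzero, Lemma \ref{-5} applied to each metric yields
\[
\dim\mathfrak{c}(g,J)/\mathfrak{i}(g,J)=D(g,J)-1=D(\tilde g,J)-1=\dim\mathfrak{c}(\tilde g,J)/\mathfrak{i}(\tilde g,J),
\]
where the middle equality uses the c-projective invariance of the degree of mobility (Remark \ref{rem:isomorphism}). Combined with $\dim\mathfrak{c}(g,J)=\dim\mathfrak{c}(\tilde g,J)$, this settles the main case.

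\emph{Degenerate cases and globalisation.} The remaining cases are $D(g,J)\leq 2$, or $D(g,J)\geq 3$ with one of $B(g),B(\tilde g)$ vanishing. Here my plan is to reduce locally: by Lemma \ref{lem:changeofmetric}, on every ball $U\subseteq M$ (open, simply connected, with compact closure and exact K\"ahler form) we can replace $g$ by a c-projectively equivalent Riemannian metric $g'$ with $B(g')\neq 0$, and similarly for $\tilde g$. The main-case argument applied on $U$ then gives $\dim\mathfrak{i}(g|_U,J)=\dim\mathfrak{i}(\tilde g|_U,J)$. To pass from the local equality on balls to the global statement I would invoke an Ambrose--Singer argument in the spirit of Lemma \ref{lem:extensiontoglobal}: holomorphic Killing vector fields on a simply connected manifold correspond to parallel sections of a suitable prolongation connection on a vector bundle over $M$, and a ball $U$ containing a tubular neighbourhood of finitely many loops whose parallel transports generate the holonomy Lie algebra at a base point satisfies $\dim\mathfrak{i}(g,J)=\dim\mathfrak{i}(g|_U,J)$, and analogously for $\tilde g$. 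Chaining these identifications gives $\dim\mathfrak{i}(g,J)=\dim\mathfrak{i}(\tilde g,J)$.

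\emph{Main obstacle.} The delicate point, and the reason the paper attributes the result to \cite{EMN}, is that for the Ambrose--Singer step to be applicable uniformly to both metrics one needs the prolongation of the holomorphic Killing equation to be built in a \emph{c-projectively invariant} way, so that the same bundle, the same connection and the same ball $U$ serve for $g$ and $\tilde g$ simultaneously. The naive Killing prolongation depends on the choice of representative metric (through its Levi-Civita connection and curvature); the content of \cite{EMN} is precisely a tractor-type invariant extension that removes this dependence and makes the joint local-to-global comparison legitimate.
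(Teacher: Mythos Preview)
Your approach is fundamentally different from the paper's, and it has a genuine circularity gap.

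\textbf{What the paper does.} The paper's proof is direct and explicit, with no case distinction on $D(g,J)$ or on the constant $B$. Given a holomorphic Killing field $K$ for $g$, simple connectedness lets one write $K=X_f$ for a Hamiltonian $f$ (so that the Killing condition becomes ``$\nabla\nabla f$ is hermitian''). The paper then checks by a short computation, using the transformation rule \eqref{eq:lchpro} and the known fact that $e^{\pm 2\phi}$ are Hamiltonians of Killing fields, that $\tilde\nabla\tilde\nabla(e^{2\phi}f)$ is hermitian. Thus $K\mapsto \tilde X_{e^{2\phi}f}$ is (up to the ambiguity $f\mapsto f+\mathrm{const}$, which only affects the image by multiples of $\tilde X_{e^{2\phi}}$) a linear map $\mathfrak{i}(g,J)\to\mathfrak{i}(\tilde g,J)$, and symmetry gives the dimension equality. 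No prolongation bundle, no Ambrose--Singer, no local-to-global step is needed.

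\textbf{The gap in your argument.} Your ``degenerate cases'' step is circular. Suppose $D(g,J)\geq 3$ and $B(g)=0$. You pass on a ball $U$ to a c-projectively equivalent $g'$ with $B(g')\neq 0$. Lemma~\ref{-5} then gives $\dim\mathfrak{c}(g',J)/\mathfrak{i}(g',J)=D-1$, but it tells you nothing about $\mathfrak{i}(g,J)$ itself: to link $\dim\mathfrak{i}(g|_U)$ with $\dim\mathfrak{i}(g'|_U)$ you would need precisely the lemma you are proving, applied to the pair $(g,g')$ --- and for that pair your ``main case'' does not apply because $B(g)=0$. The same obstruction blocks the comparison of $\tilde g$ with $\tilde g'$ when $B(\tilde g)=0$. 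In the paper's overall logic the implication runs the other way: the present lemma is proved first by the explicit Hamiltonian construction, and only then is it used (together with Lemma~\ref{lem:changeofmetric}) to transfer the conclusion of Lemma~\ref{-5} from $g'$ back to $g$.

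A secondary issue: for $D(g,J)=2$ there is in general no extended system \eqref{eq:hprosystem} at all (Theorem~\ref{thm:hprosystem} requires $D\geq 3$), so neither Lemma~\ref{-5} nor Lemma~\ref{lem:changeofmetric} is available, and your reduction has nothing to start from. The paper's explicit map handles this case without any extra work.
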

\begin{proof} 
We give a shorter version of the proof from \cite{EMN}. Let $K$ be a Killing vector field for $(g,J)$. It follows  that $K$ is also symplectic for the K\"ahler $2$-form $\omega=g(.,J.)$. Since we are working on a simply connected space, every symplectic vector field arises from a hamiltonian function $f$, i.e. $K=X_f$, where $X_f$ is defined by $g(X_f,J.)=df$. The condition that $K$ is Killing is equivalent to  the condition that  $\nabla\nabla f$ is hermitian.

Let $\phi$ be the function given by \eqref{M2} and consider the function $\mathrm{e}^{2\phi} f$. Let us show that this function is the hamiltonian  function for a Killing vector field for $\tilde{g}$. The geometry behind this statement is explained in \cite{EMN}.
  
We need to show that the symmetric $(0,2)$-tensor field $\tilde{\nabla}\tilde{\nabla}(e^{2\phi}f)$ is hermitian. 

First of all, it is  well-known, see  for example \cite[Proposition 3]{ApostolovI} or \cite[Lemma 3.2]{Kiyohara2010}, that the function $\mathrm{e}^{-2\phi}$ is the hamiltonian for a Killing vector field for $g$ and,  swapping the metrics $g$ and $\tilde g$, that  $\mathrm{e}^{2\phi}$ is the hamiltonian  function for a Killing vector field for $\tilde{g}$. Consequently, its hessian $\tilde{\nabla}\tilde{\nabla}(e^{2\phi})$ is hermitian. 

Using the transformation law \eqref{eq:lchpro}, we calculate 
$$\tilde{\nabla}\tilde{\nabla}(e^{2\phi}f)=f(\tilde{\nabla}\tilde{\nabla}e^{2\phi})+e^{2\phi}(\tilde{\nabla} df+2\Phi\odot df)$$
$$=f(\tilde{\nabla}\tilde{\nabla}e^{2\phi})+e^{2\phi}(\nabla df+\Phi\odot df+\Phi(J.)\odot df(J.)).$$

We see that the right-hand side of the above equation is hermitian, thus,  the Hamiltonian vector field of $e^{2\phi}f$ is a Killing vector field for $\tilde g$. If we 
 choose another hamiltonian function $f+\mbox{const}$ for $K$, the mapping 
$$K\longmapsto \tilde{K}=\tilde{X}_{e^{2\phi}f},$$
where $\tilde{X}_f$ is defined by $\tilde{g}(\tilde{X}_f,J.)=df$, is only defined up to adding constant multiples of $X_{e^{2\phi}}$. Thus,  $\mathrm{dim}\,\mathfrak{i}(g,J)$ coincides with $\mathrm{dim}\,\mathfrak{i}(\tilde{g},J)$ as we claimed. 
\end{proof}

Since obviously $\mathfrak{c}(g,J)=\mathfrak{c}(\tilde{g},J)$, it follows from the lemma that on each open simply connected neighborhood $U$, the number $\mathrm{dim}(\mathfrak{c}(g,J)/\mathfrak{i}(g,J))$ does not depend on the choice of the metric in the c-projective class. Suppose in addition that $U$ has compact closure. Then, by  Corollary \ref{cor:changeofmetric} there exists a Riemannian  metric $\tilde{g}$ on $U$ which is c-projectively equivalent to $g$ and such that the system \eqref{eq:hprosystem} for $\tilde{g}$ holds with a constant $\tilde{B}=-1$. Thus, by the already proven part, when restricted to a simply connected open subset $U$ with compact closure, the number $\mathrm{dim}(\mathfrak{c}(g,J)/\mathfrak{i}(g,J))$ is given by one of the values from the list of Theorem \ref{thm:hprotrafo} and, in the Einstein situation, it is given by one of the values from the list of Theorem \ref{thm:hprotrafoeinstein}.

In order to prove Theorem  \ref{thm:hprotrafo} and \ref{thm:hprotrafoeinstein} on the whole manifold, we again  use Lemma \ref{lem:extensiontoglobal}. It is known that Killing vector fields could be viewed as parallel sections of a certain  vector bundle.  The same is true for  c-projective vector fields, for example because c-projective geometry is a parabolic geometry, see for example \cite[Section 3.3]{CapGoverHammerl}, \cite[Section 4.6]{cap} or \cite{hrdina}, and infinitesimal symmetries of parabolic geometries are sections of a certain vector bundle. Actually, in \cite{EMN} the vector bundle and also the connection on it are explicitly constructed. By Lemma \ref{lem:extensiontoglobal}, the number $\mathrm{dim}(\mathfrak{c}(g,J)/\mathfrak{i}(g,J))$ on the whole manifold is the same as this number for the restriction of the K\"ahler structure to a certain ball and above we have shown that this value is contained in the list of Theorem \ref{thm:hprotrafo} or, in the Einstein situation, in the list of  Theorem \ref{thm:hprotrafoeinstein}.

\section{Proof of Theorem \ref{thm:einstein}}\label{sec:thmeinstein}

Let us first recall the following statement from \cite{haddad2} (and give a full proof since the publication  is not easy to find) 
\begin{lem}\label{lem:einsteincproeinstein2}
Let $g,\tilde{g}$ be c-projectively equivalent K\"ahler-Einstein metrics on the connected complex manifold $(M,J)$ of real dimension $2n\geq 4$. Then, for every $A\in \mathcal{A}(g,J)$ with corresponding $1$-form $\lambda$, there exists a function $\mu$ such that $(A,\lambda,\mu)$ satisfies \eqref{eq:hprosystem} with $B=-\frac{\mathrm{Scal}(g)}{4n(n+1)}$.
\end{lem}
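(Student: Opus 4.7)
The first equation of \eqref{eq:hprosystem} is the defining equation for $A\in\mathcal{A}(g,J)$, so what remains is to produce a function $\mu$ making the second and third equations hold with $B:=-\mathrm{Scal}(g)/(4n(n+1))$. I would split the argument according to the degree of mobility.

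\emph{Case $D(g,J)\ge 3$.} Apply Theorem \ref{thm:hprosystem} to get a uniform constant $B'$ such that every $A\in\mathcal{A}(g,J)$ fits into a triple $(A,\lambda,\mu)$ satisfying \eqref{eq:hprosystem} with $B'$. If every element of $\mathcal{A}(g,J)$ has $\lambda\equiv 0$ the statement is verified directly (on multiples of $g$, take $\mu=-B$). Otherwise some solution has $\lambda\not\equiv 0$, and Lemma \ref{lem:scalarB} -- which uses only the Einstein condition on $g$ -- pins down $B'=B$.

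\emph{Case $D(g,J)\le 2$.} If $D(g,J)=1$, then $\mathcal{A}(g,J)=\mathbb{R}g$ and the claim is trivial. Otherwise $\mathcal{A}(g,J)=\mathbb{R}g\oplus\mathbb{R}A^\star$, where $A^\star:=A(g,\tilde g)$ is not proportional to $g$. By linearity of \eqref{eq:hprosystem} in $(A,\lambda,\mu)$ it suffices to handle $A^\star$. Substituting the two Einstein conditions $\mathrm{Ric}(g)=\tfrac{\mathrm{Scal}(g)}{2n}g$ and $\mathrm{Ric}(\tilde g)=\tfrac{\mathrm{Scal}(\tilde g)}{2n}\tilde g$ into the transformation rule \eqref{eq:riccitafo} and rearranging gives exactly equation \eqref{eq:kaehlereinstein}:
\begin{equation*}
Bg-\tilde B\tilde g \,=\, -\nabla\Phi+\Phi\otimes\Phi-\Phi(J\cdot)\otimes\Phi(J\cdot),
\end{equation*}
with $\tilde B := -\mathrm{Scal}(\tilde g)/(4n(n+1))$. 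Now reverse the ``straight-forward calculation'' appearing in the proof of Lemma \ref{lem:einsteincproeinstein}: viewing $A^\star$ as a $(1,1)$-tensor one has $\tilde g=(\det A^\star)^{-1/2}gA^{\star -1}$, and by \eqref{eq:relationphilambda} $\lambda=-\Phi g^{-1}A^\star$. Substituting these into the displayed identity and matching coefficients of $g$ and $gA^{\star -1}$ produces the second equation of \eqref{eq:hprosystem} with the explicit function
\begin{equation*}
\mu \,:=\, g(A^{\star -1}\Lambda,\Lambda)-\tilde B(\det A^\star)^{-1/2},\qquad \Lambda:=g^{-1}\lambda.
\end{equation*}

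\emph{Third equation and the main obstacle.} In either case, once the second equation holds, apply $\nabla_W$ to both sides, antisymmetrize in $W,Z$, invoke the Ricci identity to replace $[\nabla_W,\nabla_Z]\lambda$ by $-R(W,Z)^{*}\lambda$, substitute $\nabla A$ from \eqref{eq:mainA}, and trace with $g^{-1}$. Using the Einstein condition $\mathrm{Ric}(g)=-2(n+1)B\,g$ one more time collapses the remaining curvature contraction and yields $\nabla_Z\mu = 2B\lambda(Z)$. The main obstacle is the explicit reversal in the case $D(g,J)\le 2$: identifying $\mu$ correctly from \eqref{eq:kaehlereinstein} requires carefully tracking how the quadratic piece $\Phi\otimes\Phi-\Phi(J\cdot)\otimes\Phi(J\cdot)$ becomes, after substituting $\lambda=-\Phi g^{-1}A^\star$ and differentiating, a sum of a multiple of $gA^{\star -1}$ and of $g$; conceptually this is the index calculation of Lemma \ref{lem:einsteincproeinstein} read backwards, and no new ingredient beyond the two Einstein conditions is used.
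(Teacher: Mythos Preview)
Your plan is correct and follows essentially the same route as the paper: derive the second equation of \eqref{eq:hprosystem} for the distinguished solution $A(g,\tilde g)$ from the Ricci-tensor transformation formula \eqref{eq:riccitafo} together with the two Einstein conditions (the paper obtains exactly your $\mu=g(A^{-1}\Lambda,\Lambda)-\tilde B(\det A)^{-1/2}$), and then pass to all of $\mathcal A(g,J)$ by linearity when $D(g,J)=2$ and by Theorem~\ref{thm:hprosystem} when $D(g,J)\ge 3$. The only differences are organisational --- you do the case split first and invoke Lemma~\ref{lem:scalarB} rather than the explicit computation to pin down $B$ in the high-mobility case --- and in the derivation of the third equation, where the paper traces the once-differentiated second equation and uses the Killing identity $\nabla_X\nabla\Lambda=-JR(X,J\Lambda)$ instead of antisymmetrising.
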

\begin{proof}
Denote by $A=A(g,\tilde{g})$ the solution of \eqref{eq:mainA} given by \eqref{eq:defA}. We can insert the relation \eqref{eq:relationphilambda} between the $1$-forms $\Phi$ in \eqref{eq:lchpro} and $\lambda$ in \eqref{eq:mainA} into \eqref{eq:riccitafo} to obtain the change of the Ricci-tensors in terms of $A$ and $\lambda$. Denoting by $\Lambda=g^{-1}\lambda$ the vector field corresponding to $\lambda$, a straight-forward calculation shows
$$\mathrm{Ric}(\tilde{g})=\mathrm{Ric}(g)+2(n+1)(g(A^{-1}\nabla\Lambda.,.)-g(A^{-1}\Lambda,\Lambda)g(A^{-1}.,.)).$$
Now suppose both metrics are Einstein, that is $\mathrm{Ric}(\tilde{g})=\tilde{c}\tilde{g}$ and $\mathrm{Ric}(g)=cg$ for constants $c=\tfrac{\mathrm{Scal}(g)}{2n},\tilde{c}=\tfrac{\mathrm{Scal}(\tilde{g})}{2n}$. Inserting this into the last equation and multiplying with $g^{-1}$ from the left yields 
$$\tilde{c}g^{-1}\tilde{g}=c\mathrm{Id}+2(n+1)(A^{-1}\nabla\Lambda-g(A^{-1}\Lambda,\Lambda)A^{-1}).$$
By \eqref{eq:defA}, $\tilde{g}$ can be written as $\tilde{g}=(\mathrm{det}A)^{-\tfrac{1}{2}}gA^{-1}$. Inserting this into the last equation and multiplying with $A$ from the left, we obtain
$$\tilde{c}(\mathrm{det}A)^{-\tfrac{1}{2}}\mathrm{Id}=cA+2(n+1)(\nabla\Lambda-g(A^{-1}\Lambda,\Lambda)\mathrm{Id}).$$
Rearranging terms yields
\begin{align}
\nabla\Lambda=\mu \mathrm{Id}+BA,\label{eq:secondeqsystem}
\end{align}
where we defined 
\begin{align}
\mu=\frac{\bar{c}(\mathrm{det}A)^{-\tfrac{1}{2}}}{2(n+1)}+g(A^{-1}\Lambda,\Lambda)\mbox{ and }B=-\frac{c}{2(n+1)}.\label{eq:functionandconstant}
\end{align}
Equation \eqref{eq:secondeqsystem} is exactly the second equation in \eqref{eq:hprosystem}. It remains to show that the third equation on the function $\mu$ is satisfied as well. In \cite[Remark 5]{FKMR} it was noted that if the second equation in the system \eqref{eq:hprosystem} holds for $B$ equal to a constant, the third equation in \eqref{eq:hprosystem} is satisfied automatically. This is sufficient for our purposes, however, we show that the third equation can be obtained directly by taking the covariant derivative of \eqref{eq:secondeqsystem}. We obtain
$$\nabla_X \nabla\Lambda=(\nabla_X\mu) \mathrm{Id}+B\nabla_X A\overset{\eqref{eq:mainA}}{=}(\nabla_X\mu) \mathrm{Id}+B(g(.,X)\Lambda+g(.,\Lambda)X+g(.,JX)J\Lambda+g(.,J\Lambda)JX).$$
Taking the trace of this equation yields
\begin{align}
\mathrm{trace}(\nabla_X \nabla\Lambda)=2n\nabla_X\mu+4Bg(X,\Lambda).\label{eq:thirdeqderiv}
\end{align}
As in the proof of Lemma \ref{lem:scalarB}, we can use that $J\Lambda$ is Killing to obtain the identity $\nabla_X\nabla \Lambda=-JR(X,J\Lambda)$. Together with the usual identities for the Ricci-tensor of a K\"ahler metric, this yields
$$\mathrm{trace}(\nabla_X \nabla\Lambda)=-\mathrm{trace}(J R(X,J\Lambda))=-2\mathrm{Ric}(X,\Lambda)=-2cg(X,\Lambda),$$
where we used the Einstein condition in the last step. Inserting the above formula and $B$ from \eqref{eq:functionandconstant} into \eqref{eq:thirdeqderiv}, we obtain the third equation in \eqref{eq:hprosystem}. 

We have shown that when $g,\tilde{g}$ are c-projectively equivalent K\"ahler-Einstein metrics, there exists a function $\mu$ and a constant $B$ such that the triple $(A=A(g,\tilde{g}),\lambda,\mu)$ satisfies \eqref{eq:hprosystem} for the metric $g$ (of course, by interchanging the roles of $g,\tilde{g}$ this can be obtained also in terms of $\tilde{g}$). Since in the case $D(g,J)=2$ every $A'\in \mathcal{A}(g,J)$ is a linear combination of $\mathrm{Id}$ and the solution $A$ from above, we obtain the proof of Lemma \ref{lem:einsteincproeinstein2} for $D(g,J)=2$. On the other hand, in the case $D(g,J)\geq 3$, Lemma \ref{lem:einsteincproeinstein2} follows as a direct application of Theorem \ref{thm:hprosystem} above. 
\end{proof}

\begin{rem}  Combining Lemmas \ref{lem:einsteincproeinstein} and \ref{lem:einsteincproeinstein2}, we obtain Theorem \ref{+2}.  
\end{rem} 

Let us  now prove  Theorem \ref{thm:einstein}, that is, let us show that two c-projectively equivalent K\"ahler-Einstein metrics on a closed connected complex manifold have constant holomorphic curvature unless they are affinely equivalent.

We have shown that the triple $(A,\lambda,\mu)$, where $A$ is the tensor from \eqref{eq:defA} constructed by the two metrics and $\lambda$ is the corresponding $1$-form from \eqref{eq:mainA}, satisfies the system \eqref{eq:hprosystem} for a certain constant $B$. If this constant is zero, we see from \eqref{eq:hprosystem} that the function $\mu$ is constant and $\nabla\lambda$ is parallel. Since $\lambda$ is the differential of a function and the manifold is closed, there are points where $\nabla\lambda$ is positively and negatively definite respectively (corresponding to the minimum and maximum value respectively of the function). Since $\nabla\lambda$ is parallel, it actually has to vanish identically, thus, $\lambda$ is parallel. Since it vanishes at points where the corresponding function is maximal, $\lambda$ has to be identically zero. Using Remark \ref{rem:affine}, this implies that the two Einstein metrics are affinely equivalent.

Now suppose that the metrics are not affinely equivalent. In particular, $B$ is not zero and the function $\mu$ is not constant. Using the equations from the system \eqref{eq:hprosystem}, we can succesively replace the covariant derivatives of $A$ and $\lambda$ to obtain that $\mu$ satisfies the third order system 
\begin{align}
\begin{array}{c}
(\nabla\nabla\nabla \mu)(X,Y,Z)=B[2(\nabla_{X}\mu)g(Y,Z)+(\nabla_{Z}\mu)g(X,Y)+(\nabla_{Y}\mu)g(X,Z)\vspace{2mm}\\-(\nabla_{JZ}\mu)g(JX,Y)-(\nabla_{JY}\mu)g(JX,Z)]
\end{array}.\label{eq:tanno}
\end{align}
of partial differential equations. This equation was studied in \cite{HiramatuK,Tanno1978}. There it was shown that the existence of non-constant solutions of this equation on a closed connected K\"ahler manifold implies that $B<0$ and the metric $g$ has constant holomorphic  curvature equal to $-4B$. By interchanging the roles of $g$ and $\tilde{g}$, this statement holds for $\tilde{g}$ as well. This completes the proof of Theorem \ref{thm:einstein}.

\subsection*{\bf Acknowledgements.} We are grateful to D. V. Alekseevsky, D. Calderbank,  M. Eastwood,  A. Ghigi, V. Kiosak  and C. T\o nnesen-Friedman for discussions and useful comments to this paper. Also, we thank    Deutsche Forschungsgemeinschaft (Research training group   1523 --- Quantum and Gravitational Fields) and FSU Jena for partial financial support.

\nocite{*}
\bibliographystyle{plain}

\end{document}